\newcommand{\Hom}{{\mathrm{Hom}}}
\def\s{\mathscr }
\DeclareMathAlphabet{\mathbbmsl}{U}{bbm}{m}{sl}
\newcommand{\res}{{\rm res}}
\newcommand{\tr}{{\rm tr}}
\newcommand{\nat}{\le\natural}
\def\uxs{U_{\be X\be/\lbe S}\le}
\def\uys{U_{\be Y\be/\lbe S}\le}
\def\uxys{U_{\lbe X\be\times_{\be S}\lbe Y\be/\lbe S}\le}
\def\uxss{U_{\be S}\lbe(\be X\le)}
\def\nh3{{\rm NH}^{3}\lbe}
\def\ruxs{{\rm RU}_{\be X\be/S}}
\def\rdiv{{\rm{RDiv}}_{\be X\be/\be S}}
\def\pic{{\rm{Pic}}\,}
\def\upicxs{\be{\rm{UPic}}_{\lbe X\be/\lbe S}}
\def\upicys{{\rm{UPic}}_{\e Y\be/\le S}}
\def\upicxys{{\rm{UPic}}_{\lbe X\lbe\times_{\be S}\lbe Y\be/\lbe S}}
\def\bg{{\mathbb G}}
\def\g{\varGamma}
\def\s{\mathscr }
\def\tors{\lle\rm{tors}}
\def\dbs{D^{\e b}\lbe(S_{\et})}
\def\tors{\lbe{\rm tors}}
\newcommand{\isoto}{\overset{\!\sim}{\to}}
\def\g{\varGamma}
\def\br{{\rm{Br}}\e}
\def\brp{{\rm{Br}}^{\e\prime}\lbe\lbe}
\def\bra{{\rm{Br}}_{\lbe\rm{a}}^{\le\prime}\lbe(\lbe X\!/\be S\le)}
\def\bray{{\rm{Br}}_{\lbe\rm{a}}^{\le\prime}\lbe(\le Y\!/\be S\e)}
\def\braxy{{\rm{Br}}_{\lbe\rm{a}}^{\le\prime}\lbe(X\!\times_{\be S}\! Y\!/\lbe S\e)}
\def\braxk{{\rm{Br}}_{\lbe\rm{a}}^{\le\prime}(X/k\e)}
\def\bro{{\rm{Br}}_{\lbe 1}^{\le\prime}\lbe\lle(\lbe X\!/\be S\e)}
\def\broys{{\rm{Br}}_{\lbe 1}^{\le\prime}(\le Y\!/\be S\e)}
\def\broxys{{\rm{Br}}_{\lbe 1}^{\le\prime}\lbe(\lbe X\!\times_{\be S}\!Y\!/\be S\le)}
\def\brt{{\rm{Br}}_{\lbe 2}^{\le\prime}\lbe(\lbe X\!/\be S\e)}
\def\brty{{\rm{Br}}_{2}^{\le\prime}\lbe(Y\be/\be S\e)}
\def\brtxy{{\rm{Br}}_{2}^{\le\prime}\lbe(\lbe X\!\be\times_{\be S}\!\lbe Y\!/\be S\e)}
\def\uys{U_{\le Y/S}\le}
\def\picxs{{\rm Pic}_{\lbe X\!/\lbe S}}
\def\picys{{\rm Pic}_{\e Y\!\lbe/\lbe S}}
\def\picxys{{\rm Pic}_{X\lbe\times_{\be S}\lbe Y\lbe/\lbe S}}
\def\npicxs{{\rm N}\lle\pic\be(\lbe X\be/\be S)}
\def\npic{{\rm N}\lle\pic\lle}
\def\nbr{{\rm N}\lle\brp}
\def\brxs{{\rm{Br}}^{\le\prime}_{\be X\be/\lbe S}}
\def\brys{{\rm Br}^{\le\prime}_{Y\!/\lbe S}}
\def\brxys{{\rm Br}^{\le\prime}_{\be X\be\times_{\be S}\lbe Y\!/\lbe S\le}}
\def\nbrxs{{\nbr}(\be X\!/\be S)}
\def\nbrys{{\nbr}(\le Y\!/\be S)}
\def\nbrxys{{\nbr}(\lbe X\!\times_{\be S}\!Y\be/\lbe S)}
\def\bxs{\brp(\lbe X\!/\be S\le)}
\def\bys{\brp(\le Y/S\e)}
\definecolor{labelkey}{rgb}{1,0,0}
\DeclareMathAlphabet{\mathcalligra}{T1}{calligra}{m}{n}
\numberwithin{equation}{section}
\newcommand{\sh}{\kern -.4em\phantom{a}^{\mathbf{\sim}}}
\newcommand{\lra}{\longrightarrow}
\newcommand{\et}{{\rm {\acute et}}}
\newcommand{\fppf}{{\rm fppf}}
\newcommand{\ks}{k^{\e\rm s}}
\def\be{\kern -.1em}
\def\le{\kern 0.03em}
\def\lle{\kern 0.015em}
\def\lbe{\kern -.025em}
\newcommand{\Z}{{\mathbb Z}}
\newcommand{\N}{{\mathbb N}}
\newcommand{\spec}{\mathrm{ Spec}\,}
\newcommand{\krn}{\mathrm{Ker}\e}
\newcommand{\img}{\mathrm{Im}\e}
\newcommand{\cok}{\mathrm{Coker}\,}
\def\e{\kern 0.08em}
\newcommand{\xs}{X^{\rm{s}}}
\newcommand{\ys}{Y^{\lle\rm{s}}}
\newtheorem{lemma}{Lemma}[section]
\newtheorem{theorem}[lemma]{Theorem}
\newtheorem{proposition-definition}[lemma]{Proposition-Definition}
\newtheorem{corollary}[lemma]{Corollary}
\newtheorem{proposition}[lemma]{Proposition}
\theoremstyle{definition}
\newtheorem{definition}[lemma]{Definition}
\theoremstyle{remark}
\newtheorem{remark}[lemma]{Remark}
\newtheorem{remarks}[lemma]{Remarks}
\definecolor{labelkey}{rgb}{1,0,0}
\begin{document}

\input xy     
\xyoption{all}

\title[The units-Picard complex and the Brauer group]{The relative units-Picard complex and the Brauer group of a product}

\author{Cristian D. Gonz\'alez-Avil\'es}
\address{Departamento de Matem\'aticas, Universidad de La Serena, Cisternas 1200, La Serena 1700000, Chile}
\email{cgonzalez@userena.cl}
\thanks{Partially supported by Fondecyt grant 1160004.}
\date{\today}

\subjclass[2010]{Primary 14F22, 14F20}
\keywords{Units-Picard complex, Brauer groups, products of schemes}

\maketitle

\topmargin -1cm

\smallskip

\maketitle

\topmargin -1cm

\begin{abstract} We introduce the relative units-Picard complex of an arbitrary morphism of schemes and apply it to the problem of describing the (cohomological) Brauer group of a (fiber) product of schemes in terms of the Brauer groups of the factors. Under certain hypotheses, we obtain a five-term exact sequence involving the preceding groups which enables us to solve the indicated problem for, e.g., a class of ruled varieties over a field of characteristic zero. 
\end{abstract}

\section{Introduction}

In this paper a {\it $k$-variety}, where $k$ is a field, is a geometrically integral $k$-scheme of finite type.

Let $k$ be a field with fixed separable closure $\ks$, set $\g={\rm Gal}(\ks\be/k)$ and let $X$ be a $k$-variety. When ${\rm char}\, k=0$, Borovoi and van Hamel introduced in \cite{bvh} a complex ${\rm UPic}\le(\xs)$ of $\g$-modules of length 2 (in degrees $0$ and $1$) whose zero-th cohomology is the group of relative units $U_{k}(\xs)=\ks[X]^{*}\be/(\ks)^{*}$ and first cohomology is  $\pic\xs$, where $\xs=X\times_{k}\ks$. As an application of their construction, they showed that a certain variant of the elementary obstruction of Colliot-Th\'el\`ene and Sansuc \cite[Definition 2.2.1, p.~413]{cts} to the existence of a rational point on a torsor under a (smooth) linear algebraic $k$-group $G$ corresponds to a class in Borovoi's abelian cohomology group $H^{1}_{\rm ab}(k,G\e)$, thereby providing a cohomological interpretation of this obstruction. Later, Harari and Skorobogatov, motivated also by their investigations of rational (and integral) points on schemes over number fields (and their rings of integers), introduced in \cite{hsk} a generalization of the Borovoi-van Hamel complex and used it to extend to a general smooth $k$-variety the descent theory of Colliot-Th\'el\`ene and Sansuc \cite{cts}, which is developed under the hypothesis $U_{k}(\lbe\xs)=0$. The aforementioned descent theory depends, in part, on rather intricate computations with spectral sequences (see, especially, the 11-page appendix \cite[Appendix 1.A, pp.~397-407]{cts}), as well as on some delicate explicit computations with cocycles \cite[Proposition 3.3.2 and Lemma 3.3.3]{cts}. Harari and Skorobogatov showed that, by working instead (primarily) with derived categories and hypercohomology of complexes, a number of technical simplifications could be achieved in their (more general) descent theory.

In this paper we associate to an arbitrary morphism of schemes $f\colon X\to S$ a relative units-Picard complex $\upicxs$ in degrees $-1$ and $0$ (regarded as an object of the derived category $\dbs$) which generalizes the Borovoi-van Hamel and Harari-Skorobogatov complexes. If $f$ is schematically dominant (as is the case when $S=\spec k$, where $k$ is a field), $H^{-1}\lbe(\e\upicxs)=\uxs=\cok\be[\e\bg_{m,\e S}\to f_{*}\bg_{m,\e X}]$ is the \'etale sheaf of relative units of $X$ over $S$, where the indicated morphism of \'etale sheaves on $S$ is induced by $f$, and $H^{\le 0}\lbe(\e\upicxs)=\picxs=R^{\le 1}_{\et}\le f_{\lbe *}\bg_{m,X}$ is the (\'etale) Picard functor of $X$ over $S$. Thus $\upicxs$ is, indeed, a natural generalization of the Borovoi-van Hamel complex ${\rm UPic}\le(\xs)$ and, like ${\rm UPic}(\xs)$, $\upicxs$ is well-suited for studying obstructions to the existence of a section of $f$ when $X$ is a torsor under a smooth $S$-group scheme $G$ with connected fibers whose maximal fibers $G_{\lbe\eta}$ are {\it rational} over $k(\eta)^{\rm s}$. The indicated rationality hypothesis arises as follows. When ${\rm char}\, k=0$, a key ingredient for relating the Borovoi-van Hamel elementary obstruction to the abelian cohomology group $H^{1}_{\rm ab}(k,G\e)$, via \cite[Lemma 6.4]{san}, is the additivity lemma \cite[Lemma 5.1]{bvh}
\begin{equation}\label{upicadd-0}
{\rm UPic}\le(\xs\!\times_{\ks}\! \ys\le)\simeq {\rm UPic}(\xs)\oplus {\rm UPic}(\ys),
\end{equation}
where $X$ and $Y$ are smooth $k$-varieties and $\ys$ is rational. The proof of \eqref{upicadd-0} combines Rosenlicht's additivity theorem
$U_{k}(\xs)\oplus\e U_{k}(\ys)\isoto U_{k}(\e\xs\be\times_{\ks}\be \ys)$ \cite[Theorem 2]{ros} and a lemma of Colliot-Th\'el\`ene and Sansuc \cite[Lemma 11, p.~188]{cts77} which states that the canonical injection $\pic \xs\lbe\oplus\lbe \pic \ys\hookrightarrow \pic\be(\xs\!\times_{\ks}\!\ys)$ is an isomorphism if $X$ and $Y$ are smooth $k$-varieties and $\ys$ is rational. Rosenlicht's additivity theorem holds true in a much broader setting and independently of any rationality hypotheses. Indeed, it is shown in \cite[Corollary 4.3]{ga} that, if $S$ is a reduced scheme and $f\colon X\to S$ and $g\colon Y\to S$ are faithfully flat morphisms locally of finite presentation with reduced and connected maximal geometric fibers and $f\!\times_{\lbe S}\lbe g\e\colon X\be\times_{S}\be Y\to S$ has an \'etale quasi-section (e.g., is smooth), then there exists a canonical isomorphism of \'etale sheaves on $S$
\begin{equation}\label{uadd}
\uxs\lbe\oplus\lbe\uys\isoto U_{X\times_{S}Y/S}.
\end{equation}
On the other hand, that part of the proof of \eqref{upicadd-0} in \cite{bvh} which relies on the Colliot-Th\'el\`ene-Sansuc isomorphism can be shown to follow from the weaker statement
\begin{equation}\label{app}
\pic\be(\xs\!\times_{k^{\e\rm s}}\!\ys\le)^{\g}\simeq (\e\pic\xs)^{\g}\be\oplus\be(\e\pic\ys)^{\g},
\end{equation}
which certainly holds if either $\xs$ or $\ys$ is rational but also, for example, for pairs of abelian varieties $X$ and $Y$ such that $\Hom_{\le k}\lbe(X,Y)=0$ \cite[Corollary 5.11]{ga}. When \eqref{app} is satisfied by the maximal fibers of $f\times_{S}g\colon X\times_{S}Y\to S$, and certain additional hypotheses on $S, X, Y, f$ and $g$ hold, the canonical morphism $\picxs\oplus\picys\to\picxys$ is an isomorphism of \'etale sheaves on $S$. In conjunction with \eqref{uadd}, the latter yields an isomorphism
\begin{equation}\label{upadd-0}
\upicxys\simeq\upicxs\oplus\upicys
\end{equation}
in $\dbs$ which generalizes \eqref{upicadd-0}. In analogy to the Borovoi-van Hamel approach, \eqref{upadd-0} will be shown (elsewhere) to be relevant in relating certain obstructions to the existence of a section of $X\to S$, where $S$ is a connected Dedekind scheme and $X$ is a torsor under a reductive $S$-group scheme $G$, to the {\it abelian class group} $C_{\rm ab}(G\e)$ of $G$ introduced in \cite{ga3}, which is a certain subgroup of  $H^{1}_{\rm ab}(S_{\fppf},G\e)$. 

\smallskip

In the present paper we develop a different application of \eqref{upadd-0}, namely to the problem of describing the (cohomological) Brauer group of a product of schemes in terms of the Brauer groups of the factors.

\smallskip

For any scheme $X$, let $\brp X=H^{\le 2}\lbe(X_{\et},\bg_{m,\le X}\lbe)$ be the 
cohomological Brauer group of $X$ and, for any morphism of schemes $f\colon X\to S$, let $\brxs=R^{\le 2}_{\et}\le f_{\lbe *}\bg_{m,\le X}$. Define the {\it \'etale index $I(\le f\le)=I(X)$ of $f$} as the greatest common divisor of the degrees of all finite \'etale quasi-sections of $f$ of constant degree, if any exist. Note that $I(\le f\le)$ is defined (and is equal to 1) if $f$ has a section. There exist a canonical homomorphism of abelian groups $\brp X\to\brxs(\lbe S\e)$ whose kernel (respectively, cokernel) is denoted by $\bro$ (respectively, $\brt$) and a canonical map $\brp S\to\bro$ whose cokernel is denoted by $\bra$. When $S=\spec k$, where $k$ is a field, and $X$ is a $k$-variety, the groups ${\rm{Br}}_{1}^{\le\prime}\lbe(\lbe X\be/\le k\le)$ and ${\rm{Br}}_{\lbe\rm{a}}^{\le\prime}\lbe(\lbe X\be/\le k\le)$ were discussed by Sansuc in \cite{san}. If $X$ is smooth and projective, the group $\br_{\be 2}^{\le\prime}(\be X\!/k)$ was discussed by Colliot-Th\'el\`ene and Skorobogatov in \cite{ctsk} (and denoted by $\cok\alpha$ there). By \cite[Theorem 2.1]{ctsk}, $\br_{\be 2}^{\le\prime}(\be X\!/k)$ is a finite group if $k$ is a field of characteristic zero.

\smallskip

Now, it is shown in \cite[Lemma 6.3, (ii) and (iv)]{san} that, if either $X(k)\neq\emptyset$ or $H^{\le 3}(k,\bg_{m,\le k})=0$, then ${\rm{Br}}_{\lbe\rm{a}}^{\le\prime}\lbe(\lbe X\be/\le k\le)$ fits into an exact sequence of abelian groups
\begin{equation}\label{ss}
\dots\to H^{2}(k,U_{k}(\xs\e))\to {\rm{Br}}_{\lbe\rm{a}}^{\le\prime}\lbe(\lbe X\be/\le k\le)\to
H^{1}(k,\pic\xs\e)\to\dots.
\end{equation}
Sansuc derived the above sequence from the exact sequence of terms of low degree corresponding to the Hochschild-Serre spectral sequence $H^{\le p}(\g,H^{\le q}(\xs,\bg_{m,\le \xs}))\implies H^{p+q}(X,\bg_{m,\le X})$, where $\g={\rm Gal}\big(\e\ks\be/k\le\big)$. Under the same hypotheses, Borovoi and van Hamel showed  that there exists a canonical isomorphism of abelian groups
\begin{equation}\label{iss}
{\rm{Br}}_{\lbe\rm{a}}^{\le\prime}\lbe(\lbe X\be/\le k\le)\isoto H^{1}(k,{\rm UPic}(\e\xs\e))
\end{equation}
\cite[Corollary 2.20(ii)]{bvh} which identifies \eqref{ss} with a part of an infinite hypercohomology sequence arising from a distinguished triangle in the derived category $D^{b}((\spec k)_{\et})$. The isomorphism \eqref{iss} is, in fact, valid over any base scheme $S$ in the following sense: if either $I(\le f\le)$ is defined and is equal to $1$ or $H^{\le 3}\lbe(S_{\et},\bg_{m,S})=0$, then there exists a canonical isomorphism of abelian groups
\begin{equation}\label{iss2}
\bra\isoto H^{\le 1}\lbe(S_{\et},\upicxs).
\end{equation}
Under an appropriate set of conditions (see Corollary \eqref{bradd}), \eqref{upadd-0} and \eqref{iss2} together yield the additivity theorem
\begin{equation}\label{uno}
\braxy\simeq\bra\be\oplus\be\bray,
\end{equation}
which generalizes \cite[Lemma 6.6(ii)]{san}.

Under another appropriate set of conditions, we show in the proof of Proposition \ref{bro!} that the canonical map
\begin{equation}\label{bro2}
\brt\be\oplus\be \brty\to\brtxy
\end{equation}
is injective. We then appropriately combine \eqref{uno} and \eqref{bro2} to obtain the following statement, which is the main theorem of the paper:

\begin{theorem}\label{3} {\rm (= Theorem \ref{main})} Let $S$ be a locally noetherian regular scheme and let $f\colon X\to S$ and $g\colon Y\to S$ be smooth and surjective morphisms. Assume that	
\begin{enumerate}
\item[(i)] the \'etale index $I\lbe(\le f\!\times_{\be S}\be g\le)$ is defined and is equal to $1$, 
\item[(ii)] for every point $s\in S$ of codimension $\leq 1$, the fibers $X_{\lbe s}$ and $Y_{\lbe s}$ are geometrically integral, and
\item[(iii)] for every maximal point $\eta$ of $S$,
\[
\pic\be(X_{\lbe\eta}^{\le\rm s}\be\times_{k(\eta)^{\rm s}}\be Y_{\!\eta}^{\e\rm s}\le)^{\g(\eta)}=(\e\pic X_{\lbe\eta}^{\le\rm s})^{\g(\eta)}\be\oplus\be(\e\pic Y_{\!\eta}^{\e\rm s})^{\g(\eta)},
\]
where $\g(\eta)={\rm Gal}(k(\eta)^{\rm s}/k(\eta))$.
\end{enumerate}	
Then there exists a canonical exact sequence of abelian groups
\[
\begin{array}{rcl}
0\to\brp S\to\brp X\!\oplus\!\brp\, Y\to \brp(X\!\be\times_{\be S}\be\lbe Y\le)&\to&\displaystyle\frac{\brxys\lbe(\lbe S\e)}{\brxs(\lbe S\e)\!\oplus\! \brys(\lbe S\e)}\\\\
&\to& \displaystyle\frac{\brtxy}{\brt\!\oplus\!\brty}\to 0.
\end{array}
\]
\end{theorem}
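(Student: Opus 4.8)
The plan is to obtain the five-term sequence as the output of a short chain of snake-lemma comparisons between the canonical exact sequences attached to the three structure morphisms $f$, $g$ and $h=f\times_{S}g$. For $X$ these sequences are
\[
0\to\bro\to\brp X\to\brxs(\lbe S\e)\to\brt\to0,\qquad \brp S\to\bro\to\bra\to 0,
\]
and analogously for $Y$ and for $X\times_{S}Y$. First I would record a reduction from hypothesis (i): composing a finite \'etale quasi-section of $h$ with either projection yields one of $f$ (resp.\ $g$) of the same degree, so $I(\le f\le)$ and $I(g)$ both divide $I(\le f\times_{S}g\le)=1$. Hence $f^{*},g^{*},h^{*}$ are injective on $\brp$, so each of the maps $\brp S\to\bro$, $\broys$, $\broxys$ is injective and the second displayed sequence is \emph{short} exact for each factor.

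Next I would assemble the map of four-term exact sequences going from the factor $X\amalg Y$ (the direct sum of the sequences for $f$ and $g$) to the one for $h$, all verticals being $p^{*}-q^{*}$ for the two projections $p,q$; write $\lambda$, $\mu$, $\nu$, $\rho$ for the induced maps on the $\bro$-, $\brp$-, section- and $\brt$-terms. The sign is chosen so that $\mu\circ(f^{*},g^{*})=p^{*}f^{*}-q^{*}g^{*}=h^{*}-h^{*}=0$. The decisive preliminary step is to compute $\lambda$ from the additivity isomorphism \eqref{uno} of Corollary \ref{bradd}: applying the snake lemma to
\[
\begin{array}{ccccc}
\brp S\oplus\brp S & \lra & \bro\oplus\broys & \lra & \bra\oplus\bray\\
\downarrow & & \downarrow & & \downarrow\\
\brp S & \lra & \broxys & \lra & \braxy
\end{array}
\]
(exact rows; left vertical $(c,d)\mapsto c-d$, surjective with kernel the diagonal $\brp S$; right vertical the isomorphism \eqref{uno}) yields $\krn\lambda\isoto\brp S$ and $\cok\lambda=0$.

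I would then split each four-term sequence through the image $P_{Z}=\img[\brp Z\to\brxs(\lbe S\e)]$ (and its analogues) into two short exact sequences, producing maps of short exact sequences with vertical triples $(\lambda,\mu,\pi)$ and $(\pi,\nu,\rho)$, where $\pi$ is induced on the images. The snake sequence of the second comparison, fed with $\krn\rho=0$ (the injectivity \eqref{bro2} of Proposition \ref{bro!}) and with the injectivity of $\nu$, gives $\krn\pi\isoto\krn\nu=0$ and the short exact sequence $0\to\cok\pi\to\cok\nu\to\cok\rho\to0$, in which $\cok\nu=\brxys(\lbe S\e)/(\brxs(\lbe S\e)\oplus\brys(\lbe S\e))$ and $\cok\rho=\brtxy/(\brt\oplus\brty)$ are precisely the fourth and fifth terms. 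Returning to the first comparison with $\krn\pi=0$, $\krn\lambda=\brp S$ and $\cok\lambda=0$, its snake sequence collapses to $\krn\mu\isoto\brp S$ (the image of $(f^{*},g^{*})$) and $\cok\mu\isoto\cok\pi$. Splicing $0\to\brp S\to\brp X\oplus\brp Y\to\brp(X\times_{S}Y)\to\cok\mu\to0$ with $\cok\mu\isoto\cok\pi\into\cok\nu$ and the short exact sequence above then delivers the asserted five-term sequence, the map out of $\brp(X\times_{S}Y)$ being the edge homomorphism into $\brxys(\lbe S\e)$ followed by the quotient projection.

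The genuinely substantive content, and the place where hypotheses (ii), (iii) and the regularity of $S$ are consumed, is the pair of facts fed into the snakes: the additivity \eqref{uno}, which comes from \eqref{upadd-0} by reducing the sheaf equality $\picxs\oplus\picys\isoto\picxys$ to points of codimension $\le 1$ (regularity), where (ii) and (iii) apply; and the injectivity of $\nu$ and $\rho$. I expect the injectivity of $\nu\colon\brxs(\lbe S\e)\oplus\brys(\lbe S\e)\to\brxys(\lbe S\e)$ to be the main obstacle: unlike the additivity of the Picard functor it is not formal, must be drawn from the structure of $R^{2}_{\et}h_{*}\bg_{m}$ together with the \'etale-index hypothesis (i), and is exactly what makes the fourth term a genuine quotient. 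Once it and \eqref{bro2} are in hand, everything else is the diagrammatic bookkeeping above, plus the routine (but sign-sensitive) check that the snake connecting maps agree with the canonical edge and pullback homomorphisms.
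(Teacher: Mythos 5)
Your proposal is correct and follows essentially the same route as the paper: the proof of Theorem \ref{main} there is exactly your chain of snake-lemma comparisons (packaged as Proposition \ref{bro!} together with one final diagram), resting on the same three non-formal inputs, namely the additivity of $\br_{\lbe\rm{a}}^{\le\prime}$ from Corollary \ref{bradd}, the injectivity of $\brt\be\oplus\be\brty\to\brtxy$ obtained from Proposition \ref{tor}, and the injectivity of your map $\nu$. The one ingredient you leave as an acknowledged black box, the injectivity of $\brxs(\lbe S\e)\be\oplus\be\brys(\lbe S\e)\to\brxys(\lbe S\e)$, is Proposition \ref{wdfl} of the paper; it needs only the existence of an \'etale quasi-section of $f\!\times_{\be S}\be g$ (automatic from smoothness and surjectivity), not the index-one hypothesis, and is proved by reducing to the case of a section and then to a strictly local base.
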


We now state some consequences of the above theorem for smooth and projective $k$-varieties over a field $k$ (chiefly of characteristic zero).

\smallskip

As noted above, hypothesis (iii) of the theorem is satisfied by pairs of abelian $k$-varieties $A$ and $B$ such that $\Hom_{\e k}(A,B)=0$. Thus the theorem immediately yields the following statement, which is a particular case of Corollary \ref{av}.

\begin{corollary}\label{4} Let $k$ be a field and let $A$ and $B$ be abelian varieties over $k$ such that $\Hom_{\le\lle k}(A,B\le)=0$. Then there exists a canonical exact sequence of abelian groups
\[
\begin{array}{rcl}
0\to\br\le k\to\be\br A\lbe\oplus\lbe\br B\to \br\lbe(A\!\times_{\lbe k}\be B\le)&\to& \displaystyle\frac{\br\lbe(\lbe A^{\le\rm s}\be\times_{\lbe \ks}\! B^{\e\rm s})^{\g}}{(\br\lbe A^{\le\rm s})^{\g}\be\!\oplus\! (\br B^{\e\rm s})^{\g}}\\\\
&\to&\displaystyle\frac{\br_{\be 2}(A\be\times_{k}\be B\be/k)}{\br_{\be 2}( A/k)\!\oplus\!\br_{\be 2}(\le B\lbe/k)}\to 0.
\end{array}
\]
\end{corollary}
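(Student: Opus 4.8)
The plan is to deduce the corollary as the special case $S=\spec k$, $X=A$, $Y=B$ of Theorem \ref{3}, taking $f\colon A\to\spec k$ and $g\colon B\to\spec k$ to be the structure morphisms. First I would record that the ambient hypotheses hold: $\spec k$ is (trivially) locally noetherian and regular, and $f,g$ are smooth and surjective because $A$ and $B$ are abelian varieties. I would then check conditions (i)--(iii). Condition (i) is immediate, since $A\times_{k}B$ is again an abelian variety and so has a rational point, namely its identity section; thus $f\times_{k}g$ admits a section and $I(\le f\times_{k}g\le)=1$. For (ii), $\spec k$ has a single point, of codimension $0$, whose $f$- and $g$-fibers are $A$ and $B$ themselves, and these are geometrically integral. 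For (iii), the unique maximal point is $\spec k$, with $k(\eta)=k$, $k(\eta)^{\rm s}=\ks$ and $\g(\eta)=\g$, so the required decomposition is precisely \eqref{app} for the pair $(A,B)$; this holds by \cite[Corollary 5.11]{ga} under the standing hypothesis $\Hom_{k}(A,B)=0$, and is the sole piece of genuine geometric input.

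Granting the hypotheses, Theorem \ref{3} yields a five-term exact sequence, and what remains is to rewrite each term concretely over $\spec k$. Here I would invoke the standard dictionary identifying \'etale sheaves on $\spec k$ with discrete $\g$-modules: global sections correspond to $\g$-invariants of the stalk at the geometric point $\spec\ks$, and for $f\colon A\to\spec k$ the higher direct image $R^{2}_{\et}f_{*}\G_{m,A}$ corresponds to the $\g$-module $H^{2}(A^{\rm s},\G_{m})=\brp(A^{\rm s})$, with the analogous statements for $B$ and for $A\times_{k}B$. Consequently $\brp(\spec k)=\br k$, and the fourth term $\brxys(S)/(\brxs(S)\oplus\brys(S))$ becomes the quotient of $\brp(A^{\rm s}\times_{\ks}B^{\rm s})^{\g}$ by $(\brp A^{\rm s})^{\g}\oplus(\brp B^{\rm s})^{\g}$. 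For the last term, the group ${\rm Br}_{2}^{\prime}(X/S)$ of Theorem \ref{3} is by definition $\cok[\,\brp X\to\brxs(S)\,]$, which for $X=A$, $S=\spec k$ reads $\cok[\,\brp A\to(\brp A^{\rm s})^{\g}\,]=\br_{2}^{\prime}(A/k)$, and likewise for $B$ and $A\times_{k}B$; the final quotient is therefore $\br_{2}^{\prime}(A\times_{k}B/k)$ modulo $\br_{2}^{\prime}(A/k)\oplus\br_{2}^{\prime}(B/k)$.

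Finally I would pass from $\brp$ to $\br$ by suppressing the primes. Since $A$, $B$, $A\times_{k}B$ and their base changes to $\ks$ are projective and hence carry an ample invertible sheaf, the cohomological and Azumaya Brauer groups coincide (Gabber--de~Jong), so each $\brp$ may be replaced by $\br$ and each ${\rm Br}_{2}^{\prime}$ by ${\rm Br}_{2}$; this reproduces verbatim the sequence in the statement, and the argument is valid in arbitrary characteristic. I expect no serious obstacle: the only non-formal ingredient is hypothesis (iii), supplied by \cite[Corollary 5.11]{ga}, and the single point demanding care is the base-change identification of the sheaves $R^{2}_{\et}f_{*}\G_{m}$ with the Galois modules $\brp(A^{\rm s})$, etc., on which the entire translation of the middle and right-hand terms rests.
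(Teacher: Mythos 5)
Your proposal is correct and follows essentially the same route as the paper: the paper also obtains this corollary by specializing Theorem \ref{main} to $S=\spec k$ (via Proposition \ref{mcor} and Corollary \ref{av}), using the same three ingredients you identify — the Picard decomposition for $\Hom_k(A,B)=0$ from \cite{ga}, the identification $\br'_{X/k}(k)=(\br'X^{\rm s})^{\g}$ of Remark \ref{int}(c), and Gabber's theorem to drop the primes (Remarks \ref{bgps} and \ref{int}(d)). The only cosmetic difference is that you verify hypotheses (i)--(iii) directly for the pair $(A,B)$ rather than passing through the Picard-variety condition of Corollary \ref{av}, which changes nothing of substance.
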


When $k$ is a field of characteristic 0 and one of the factors is a smooth and projective $k$-variety $X$ such that $H^{1}(\xs,\s O_{\be X^{\lbe\rm s}})=0$, then Theorem \ref{3} yields particularly simple statements. For example:

\begin{corollary}\label{niI} {\rm (=part of Theorem \ref{ni})} Let $k$ be a field of characteristic zero and let $X$ and $\e Y$ be smooth and projective $k$-varieties. Assume that
\begin{itemize}
\item[(i)] the geometric N\'eron-Severi group ${\rm NS}\!\lbe\left(\e\xs\e\right)$ is torsion-free,
\item[(ii)] either $H^{1}(\xs,\s O_{\be X^{\lbe\rm s}})=0$ or $H^{1}(\ys,\s O_{ Y^{\lle\rm s}})=0$ and
\item[(iii)] the index of $X\!\be\times_{k}\! Y$ is $1$.
\end{itemize}	
Then there exists a canonical exact sequence of abelian groups
\[
0\to\br k\to\br\lbe\lbe(X\!\be\times_{\lbe k}\!\lbe Y\le)\to (\le\br X\be/\e\br\le k\lle)\!\oplus\!(\le\br Y\be/\e\br\le k\lle)\to 0.
\]
If $(X\!\be\times_{k}\! Y\le)(k)\neq\emptyset$, then the choice of a $k$-rational point on $X\!\be\times_{k}\! Y$ determines a splitting of the above sequence.
\end{corollary}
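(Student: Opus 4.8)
The plan is to deduce Corollary \ref{niI} from Theorem \ref{3} by checking that, under the stated hypotheses, the five-term sequence of Theorem \ref{3} degenerates: the last two nonzero terms must vanish. The base is $S=\spec k$, so $f\colon X\to\spec k$ and $g\colon Y\to\spec k$ are the structure morphisms of smooth projective $k$-varieties, which are smooth and surjective. Hypothesis (iii) of Corollary \ref{niI} (the index is $1$) gives hypothesis (i) of Theorem \ref{3}, since for $S=\spec k$ the \'etale index coincides with the index. Hypothesis (ii) of Theorem \ref{3} holds because $X$ and $Y$ are $k$-varieties, hence geometrically integral. For hypothesis (iii) of Theorem \ref{3}, the unique maximal point is $\eta=\spec k$ itself, so I must verify
\[
\pic\be(\xs\be\times_{\ks}\be\ys)^{\g}=(\pic\xs)^{\g}\oplus(\pic\ys)^{\g}.
\]

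\smallskip

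The \textbf{key input} is the vanishing assumption (ii) of Corollary \ref{niI}, say $H^{1}(\xs,\s O_{\be X^{\lbe\rm s}})=0$. For a smooth projective variety this forces the Picard variety $\pic^{0}$ to be trivial, so $\pic\xs={\rm NS}(\xs)$ is finitely generated; combined with assumption (i) that ${\rm NS}(\xs)$ is torsion-free, $\pic\xs$ is a finitely generated free abelian group. The standard argument for \eqref{app} in the rational case (the Colliot-Th\'el\`ene--Sansuc lemma cited in the excerpt) adapts here: over $\ks$ one has a canonical injection $\pic\xs\oplus\pic\ys\hookrightarrow\pic(\xs\times_{\ks}\ys)$, and the vanishing of $H^{1}(\xs,\s O)$ makes the obstruction to surjectivity (which lives in a group built from $H^{1}(\xs,\s O)$ via the K\"unneth/Leray analysis of $\pic$ of a product) disappear. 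Taking $\g$-invariants then yields the desired direct-sum decomposition. I would cite or reprove this as the Picard-of-a-product computation; it is essentially \cite[Corollary 5.11]{ga} specialized to one factor having no nontrivial Picard variety.

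\smallskip

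With all three hypotheses of Theorem \ref{3} verified, I \textbf{read off} the five-term sequence. The fourth term is $\brxys(S)/(\brxs(S)\oplus\brys(S))$. Since $S=\spec k$ and $\brxs(S)=\brp X$, $\brys(S)=\brp Y$, $\brxys(S)=\brp(X\times_{k}Y)$ (as $R^{2}f_{*}\bg_{m}$ evaluated at the point recovers the cohomological Brauer group), this quotient compares the global sections; the decomposition \eqref{app} together with the structure of $\upic$ forces this term to vanish. The fifth term is $\brtxy/(\brt\oplus\brty)$; the $\br_{2}'$ groups measure the cokernel of $\brp X\to\brxs(S)$, which is trivial when the comparison map is an isomorphism, again a consequence of the additivity provided by the verified hypotheses. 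After these two terms vanish, the first three terms of the sequence read
\[
0\to\br k\to\brp X\oplus\brp Y\to\brp(X\times_{k}Y)\to 0,
\]
which, upon quotienting the middle and right terms by the image of $\br k$ (embedded diagonally, respectively via pullback), rearranges into the asserted sequence
\[
0\to\br k\to\brp(X\times_{k}Y)\to(\br X/\br k)\oplus(\br Y/\br k)\to 0.
\]
Here I use that $\br k\to\brp X$ and $\br k\to\brp Y$ are injective for smooth projective varieties over a field of characteristic zero (a point-splitting or purity argument), so the quotients $\br X/\br k$ make sense. The splitting under $(X\times_{k}Y)(k)\neq\emptyset$ comes from the fact that a rational point provides a section of $f\times_{S}g$, hence a retraction $\brp(X\times_{k}Y)\to\brp S=\br k$ compatible with the sequence.

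\smallskip

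The \textbf{main obstacle} I anticipate is the bookkeeping that turns the raw quotients of Theorem \ref{3} into the clean short exact sequence of the corollary: one must track carefully how $\br k=\brp S$ sits inside each $\brp X$ and $\brp Y$, confirm that the two outer vanishing terms really do vanish (rather than merely being small), and identify $\brtxy/(\brt\oplus\brty)$ with zero under the characteristic-zero finiteness of \cite[Theorem 2.1]{ctsk} together with the additivity forced by \eqref{upadd-0}. The genuinely new content is concentrated in verifying hypothesis (iii) of Theorem \ref{3} from the cohomological vanishing $H^{1}(\xs,\s O)=0$; once that is in hand, the rest is a diagram-chase and a standard splitting argument.
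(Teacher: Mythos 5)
Your overall strategy --- specialize Theorem \ref{3} to $S=\spec k$, show the last two terms of the five\--term sequence vanish, then rearrange into the asserted short exact sequence --- is the same as the paper's, but the crucial step where you kill the fourth term contains a genuine error and omits the key input. You assert that $\brxs(\lbe S\e)=\brp X$ because ``$R^{2}f_{*}\bg_{m}$ evaluated at the point recovers the cohomological Brauer group''; this is false. For $S=\spec k$ one has $\brxs(\lbe S\e)=(\brp\xs)^{\g}$ (Remark \ref{int}(c)), not $\brp X$ --- if the two coincided, the groups $\br_{\be 1}^{\le\prime}$ and $\br_{\be 2}^{\le\prime}$ would be identically zero and there would be nothing to prove anywhere in the paper. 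The fourth term is therefore $\brp(\xs\!\times_{\ks}\!\ys)^{\g}/\lbe((\brp\xs)^{\g}\oplus(\brp\ys)^{\g})$, a \emph{transcendental} quantity, and its vanishing does not follow from the Picard additivity \eqref{app}, from the units-Picard additivity \eqref{upadd-0}, or from the finiteness result of \cite{ctsk} (finiteness is not vanishing). The paper kills this term by invoking the Skorobogatov--Zarhin computation \cite[\S2.9, (20)]{sz}, which gives the geometric decomposition $\br(\xs\!\times_{\ks}\!\ys)=\br\xs\oplus\br\,\ys$ precisely under hypotheses (i) (${\rm NS}(\xs)$ torsion-free) and (ii) (one of the groups $H^{1}(\cdot,\s O)$ vanishes, so one Picard variety is trivial); taking $\g$-invariants of this direct sum then makes the fourth term zero, and the fifth term vanishes by exactness. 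This is where hypothesis (i) is actually spent. In your proposal you spend it instead on verifying hypothesis (iii) of Theorem \ref{3}, where it is not needed: that Picard decomposition already follows from $\Hom_{k}(A,B)=0$ with $A$ or $B$ trivial (Corollary \ref{av}, resting on \cite[Cor.~5.11]{ga}), with no torsion-freeness required.

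The remaining steps of your argument are essentially correct and agree with the paper: the verification of hypotheses (i) and (ii) of Theorem \ref{3}, the rearrangement of $0\to\br k\to\br X\oplus\br Y\to\br(X\times_{k}Y)\to 0$ into the asserted sequence (using that the index of $X$ and of $Y$ divides that of $X\times_{k}Y$, hence equals $1$, so that $\br k$ injects into $\br X$ --- it is this index argument, not purity, that does the work), and the splitting induced by a rational point. To repair the proof you need only replace the incorrect identification of $\brxs(\lbe S\e)$ and the vague appeal to ``additivity'' by the explicit geometric input $\br(\xs\!\times_{\ks}\!\ys)=\br\xs\oplus\br\,\ys$ from \cite{sz}.
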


The proof of the corollary relies on a computation of Skorobogatov and Zarhin \cite{sz} which depends on hypothesis (i) above. The next statement is immediate from the above corollary.

\begin{corollary}\label{mkor} Let $k$ be a field of characteristic zero such that $\br k=0$ and let $X$ and $\e Y$ be smooth and projective $k$-varieties such that hypotheses {\rm (i)-(iii)} of Corollary {\rm \ref{niI}} hold. Then there exists a canonical isomorphism of abelian groups
\[
\br\lbe\lbe( X\!\be\times_{\lbe k}\!\lbe Y\lle)=\br X\!\oplus\be\br Y.
\]
\end{corollary}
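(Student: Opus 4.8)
The plan is to deduce Corollary \ref{mkor} directly from Corollary \ref{niI} by specializing the hypothesis $\br k=0$. Since Corollary \ref{niI} already supplies, under hypotheses (i)--(iii), a canonical exact sequence
\[
0\to\br k\to\br\lbe\lbe(X\!\be\times_{\lbe k}\!\lbe Y\le)\to (\le\br X\be/\e\br\le k\lle)\!\oplus\!(\le\br Y\be/\e\br\le k\lle)\to 0,
\]
the essential input is the vanishing assumption. First I would substitute $\br k=0$ into this sequence. The leftmost term then vanishes, so the map $\br\lbe(X\!\times_{k}\!Y)\to(\br X\be/\e\br k)\oplus(\br Y\be/\e\br k)$ becomes injective. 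Simultaneously, the two quotient groups on the right simplify: with $\br k=0$ we have $\br X\be/\e\br k=\br X$ and $\br Y\be/\e\br k=\br Y$, so the target of that map is just $\br X\oplus\br Y$.

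Next I would invoke the splitting clause of Corollary \ref{niI}. To apply it one must exhibit a $k$-rational point on $X\!\times_{k}\!Y$; here hypothesis (iii), that the index of $X\!\times_{k}\!Y$ is $1$, does not by itself guarantee such a point, so I would instead argue that the surjectivity of the map is already forced. Indeed, once $\br k=0$ the exact sequence reduces to
\[
0\to\br\lbe\lbe(X\!\be\times_{\lbe k}\!\lbe Y\le)\to \br X\oplus\br Y\to 0,
\]
which is exact precisely when the middle arrow is an isomorphism. Reading off exactness at the two nonzero spots gives injectivity (from exactness at $\br(X\!\times_{k}\!Y)$, since the incoming map from $\br k=0$ is zero) and surjectivity (from the terminal $0$), whence the canonical map is an isomorphism $\br\lbe(X\!\times_{k}\!Y)\isoto\br X\oplus\br Y$. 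This establishes the asserted equality $\br(X\!\times_{k}\!Y)=\br X\oplus\br Y$.

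The only genuine point requiring care is the identification of the right-hand quotients with $\br X$ and $\br Y$, which is where the hypothesis $\br k=0$ is used in an essential way rather than merely in collapsing the left term. Concretely, the maps $\br k\to\br X$ and $\br k\to\br Y$ are the structural pullbacks, and the quotients $\br X\be/\e\br k$, $\br Y\be/\e\br k$ appearing in Corollary \ref{niI} are quotients by the \emph{images} of these pullbacks; with $\br k=0$ those images are trivial and the quotients coincide with the full groups. I expect this to be entirely routine, so that the statement follows immediately from Corollary \ref{niI} with no further work beyond the above substitution. There is no serious obstacle here: the corollary is a direct specialization, and the main theorem's machinery has already been deployed in proving Corollary \ref{niI}.
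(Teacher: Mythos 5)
Your proposal is correct and is exactly the argument the paper intends: the paper states that Corollary \ref{mkor} is immediate from Corollary \ref{niI}, and your specialization of the exact sequence at $\br k=0$ (with the quotients $\br X/\br k$ and $\br Y/\br k$ collapsing to $\br X$ and $\br Y$) is that immediate deduction. You are also right that the splitting clause is not needed, since exactness alone forces the middle arrow to be an isomorphism.
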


Another consequence of Corollary \ref{niI} is the following statement, which applies, in particular, to certain types of smooth and projective $k$-ruled $k$-varieties, i.e., smooth and projective $k$-varieties which are $k$-birationally equivalent to $X\be\times_{k}\be \mathbb P^{ \le 1}_{\!\be k}$ for some smooth and projective $k$-variety $X$.

\begin{corollary} \label{ni2} Let $k$ be a field of characteristic zero and let $Z$ be a smooth and projective $k$-variety which is $k$-birationally equivalent to a product $X\be\times_{k}\be Y$ of smooth and projective $k$-varieties, where $H^{1}(\e\ys\be,\s O_{ Y^{\lle\rm s}})=0$ and $\br\le Y=\br\le k$.
If ${\rm NS}\!\lbe\left(\le\xs\e\right)$ is torsion-free and $X\!\be\times_{k}\! Y$ has index $1$, then there exists an isomorphism of abelian groups $\br Z\simeq \br X$.
\end{corollary}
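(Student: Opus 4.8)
The plan is to reduce the statement to Corollary \ref{niI} by means of the birational invariance of the Brauer group. First I would record that, for smooth and projective varieties over a field of characteristic zero, the cohomological Brauer group $\br(-)$ is a birational invariant. This is classical: by weak factorization any $k$-birational map between such varieties factors as a finite chain of blow-ups and blow-downs with smooth centers, and blowing up a smooth center leaves the Brauer group unchanged (equivalently, for a smooth and projective variety $\br(-)$ is the subgroup of the Brauer group of its function field consisting of the classes unramified in codimension one, which depends only on the birational class). Applying this to the given $k$-birational equivalence yields a canonical isomorphism $\br Z\simeq\br(X\times_{k}Y)$, so it suffices to prove $\br(X\times_{k}Y)\simeq\br X$.

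Next I would invoke Corollary \ref{niI}. Its hypotheses are exactly those assumed here: ${\rm NS}(\xs)$ is torsion-free, $H^{1}(\ys,\s O_{Y^{\lle\rm s}})=0$ (so the disjunction in hypothesis (ii) holds), and $X\times_{k}Y$ has index $1$. Hence there is a canonical exact sequence
\[
0\to\br k\to\br(X\times_{k}Y)\to(\br X/\br k)\oplus(\br Y/\br k)\to 0.
\]
Since $\br Y=\br k$, the summand $\br Y/\br k$ vanishes and the sequence collapses to
\[
0\to\br k\to\br(X\times_{k}Y)\to\br X/\br k\to 0.
\]

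To identify the middle term with $\br X$, I would compare this sequence with the tautological one for $X$. Write $p\colon X\times_{k}Y\to X$ for the first projection. The structure morphism of $X\times_{k}Y$ factors through $p$, so the injection $\br k\to\br(X\times_{k}Y)$ above is the composite $\br k\to\br X\xrightarrow{p^{*}}\br(X\times_{k}Y)$; in particular $\br k\to\br X$ is injective and
\[
0\to\br k\to\br X\to\br X/\br k\to 0
\]
is exact, $\br X/\br k$ being by definition the cokernel. By the functoriality of the additivity isomorphism of Corollary \ref{niI} with respect to $p$ --- concretely, the composite of $\br(X\times_{k}Y)\to\br X/\br k$ with $p^{*}$ is the canonical projection $\br X\to\br X/\br k$ --- these two short exact sequences fit into a commutative ladder with $p^{*}$ as the middle vertical map and the identity on the two outer terms. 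The five lemma then shows that $p^{*}\colon\br X\isoto\br(X\times_{k}Y)$, and combined with the first step this gives $\br Z\simeq\br X$.

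The main obstacle is the third step: pinning down the compatibility of the map $\br(X\times_{k}Y)\to\br X/\br k$ with the pullback $p^{*}$, which must be traced through the construction of the sequence in Theorem \ref{ni} (ultimately through the additivity isomorphism \eqref{upadd-0} for the relative units-Picard complex and the functoriality of the hypercohomology triangle underlying \eqref{iss2}). Everything else is formal: the birational invariance is standard in characteristic zero, and the collapse of the second summand together with the final five-lemma argument are routine.
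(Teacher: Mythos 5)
Your proposal is correct and follows essentially the same route as the paper: reduce via birational invariance of the Brauer group in characteristic zero to showing that $p^{*}\colon\br X\to\br\lbe(X\times_{k}Y)$ is an isomorphism, then compare the collapsed sequence of Corollary \ref{niI} (using $\br Y=\br k$) with the short exact sequence $0\to\br k\to\br X\to\br X/\br k\to 0$ in a commutative ladder and apply the five lemma. The compatibility you flag as the ``main obstacle'' is immediate from the construction of the sequence in Theorem \ref{ni}(b), which is obtained from $0\to\br k\xrightarrow{\beta^{2}}\br X\oplus\br Y\xrightarrow{p_{XY}^{2}}\br(X\times_{k}Y)\to 0$ by quotienting, so the composite $\br X\xrightarrow{p^{*}}\br(X\times_{k}Y)\to\br X/\br k$ is the canonical projection by definition; the paper simply asserts the corresponding diagram.
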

\begin{proof} By the birational invariance of the Brauer group of proper and regular schemes over a field of characteristic zero, it suffices to check that the canonical pullback map $\br X\to \br\lbe\lbe(X\be\times_{\lbe k}\lbe Y\le)$ is an isomorphism of abelian groups. This follows from the commutative diagram of abelian groups
\[
\xymatrix{0\ar[r]& \br k\ar[r]\ar@{=}[d]&\br X\ar[r]\ar[d]& \br X\!/\e\br\le k\ar[r]\ar@{=}[d]& 0\\
0\ar[r]& \br k\ar[r]&\br\lbe\lbe(X\!\be\times_{\lbe k}\!\lbe Y\le)\ar[r]& \br X\!/\e\br\le k\ar[r]& 0,
}
\]
whose top (respectively, bottom) row is exact since $X$ has index 1 (respectively, by Corollary \ref{niI}).
\end{proof}

\section*{Acknowledgements}
I thank the referee for constructive criticism and for suggesting that my initial results should apply, more generally, to certain pairs of smooth and projective $k$-varieties where neither of the factors is $\ks$-rational.  I also thank Stefan Gille for answering my questions and for his encouragement, David Harari for valuable advice and Dino Lorenzini for a suggestion that led me to Corollary \ref{ni2}. 

\section{Preliminaries}

If $A$ is an object of a category, the identity morphism of $A$ will be denoted by $1_{\be A}$. The category of abelian groups will be denoted by $\mathbf{Ab}$.

 When $f\colon A\to B$ is an injective homomorphism of abelian groups (so that $A$ can be identified with $\img f$), we will sometimes write $B/A$ for $\cok f$ (this will be done chiefly to avoid introducing additional notation).

\smallskip

The following consequence of the snake lemma will be applied several times in this paper.

\begin{lemma}\label{ker-cok} If $\mathcal A$ is an abelian category and $f$ and $g$ are morphisms in $\mathcal A$ such that $g\be\circ\!\be f$ is defined, then there exists a canonical exact sequence in $\mathcal A$ 
\[
0\to\krn f\to\krn\lbe(\e g\be\circ\!\be f\e)\to\krn g\to\cok f\to\cok\be(\e g\be\circ\!\be f\e)\to\cok g\to 0.
\]
\end{lemma}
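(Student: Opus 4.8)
The plan is to realize the asserted six-term sequence as the snake-lemma sequence of a single commutative diagram, supplemented by one elementary verification at the left-hand end. Write $f\colon A\to B$ and $g\colon B\to C$, so that $g\circ f\colon A\to C$. First I would pin down the six morphisms as the evident canonical ones: $\krn f\to\krn(g\circ f)$ is the inclusion of subobjects (legitimate since $\krn f\subseteq\krn(g\circ f)$), $\krn(g\circ f)\to\krn g$ is induced by $f$, $\krn g\to\cok f$ is the composite $\krn g\into B\onto\cok f$, $\cok f\to\cok(g\circ f)$ is induced by $g$ (which carries $\img f$ into $\img(g\circ f)$), and $\cok(g\circ f)\to\cok g$ is induced by $1$ on the common target $C$.

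Second, I would apply the snake lemma to the map of exact sequences whose underlying commutative square is
\[
\begin{CD}
A @>f>> B\\
@V{g\circ f}VV @VgVV\\
C @= C
\end{CD}
\]
and which goes from the top row $A\xrightarrow{f}B\to\cok f\to0$ to the bottom row $0\to C\xrightarrow{1}C\to0$, with third vertical map the zero morphism $\cok f\to0$. After the identifications $\krn(\cok f\to0)=\cok f$ and $\cok(\cok f\to0)=0$, the snake lemma yields the exact sequence
\[
\krn(g\circ f)\to\krn g\to\cok f\to\cok(g\circ f)\to\cok g\to0,
\]
in which, tracing the construction, the three functorial maps are the canonical ones above and the connecting map $\cok f\to\cok(g\circ f)$ is the canonical map induced by $g$.

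Third, I would prepend the left-hand end. Since $\krn(g\circ f)\to\krn g$ is the restriction of $f$ and $\krn f\subseteq\krn(g\circ f)$, the kernel of this restriction is exactly $\krn f$, so $0\to\krn f\to\krn(g\circ f)\to\krn g$ is exact; splicing this onto the snake sequence produces the claimed sequence.

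The difficulty here is one of rigor rather than of ideas. I must confirm that the snake-lemma maps coincide on the nose with the canonical morphisms, so that the word ``canonical'' in the statement is justified, and, since $\mathcal A$ is an arbitrary abelian category, reinterpret the element-style descriptions above categorically; the latter is most cheaply handled via the Freyd--Mitchell embedding theorem, which reduces each such verification to the category of modules. An alternative that absorbs the separate left-hand argument into a single snake application is to use the morphism of split short exact sequences $0\to A\to A\oplus B\to B\to0$ and $0\to B\to B\oplus C\to C\to0$ whose middle vertical map is $\left(\begin{smallmatrix}f&-1\\0&g\end{smallmatrix}\right)$; its cost is then the identification of the kernel and cokernel of that middle map with $\krn(g\circ f)$ and $\cok(g\circ f)$ respectively.
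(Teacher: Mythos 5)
Your proof is correct and takes exactly the route the paper intends: the paper states this lemma without proof, merely introducing it as ``a consequence of the snake lemma'', and your diagram with top row $A\to B\to\cok f\to 0$, bottom row $0\to C\to C\to 0$ and vertical maps $g\circ f$, $g$, $0$ is the standard realization of that, with the left-hand end $0\to\krn f\to\krn(g\circ f)\to\krn g$ correctly supplied by the separate observation that the restriction of $f$ to $\krn(g\circ f)$ has kernel $\krn f$. All the details you flag (commutativity, identification of the connecting map with the map induced by $g$, and the diagram-chase caveat in a general abelian category) check out.
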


If $\mathcal A$ is an abelian category, we will write $C^{\e b}\lbe(\mathcal A)$ for the category of bounded complexes of objects of $\mathcal A$. The corresponding derived category will be denoted by $D^{\e b}\lbe(\mathcal A)$.

Recall that the {\it mapping cone} of a morphism of complexes $u\colon A^{\bullet}\to B^{\le\bullet}$ in $C^{\e b}\lbe(\mathcal A)$ is the complex $C^{\le\bullet}(u)$ whose $n$-th component, where $n\in\Z$, is $C^{\e n}(u)=A^{n+1}\oplus B^{\le n}$ with differential $d_{\lle C\lbe(u)}^{\e n}\lbe(a,b)=(-d^{\e n+1}_{\be A}\lbe(a), u(a)+d_{\lbe B}^{\e n}(b))$, where $a\in A^{n+1}$ and $b\in B^{\le n}$. Note that, if $u\colon A\to B$ is an injective morphism in $\mathcal A$, then $C^{\le\bullet}(u)=\cok u$ in $D^{\e b}\lbe(\mathcal A)$. Further, if 
\[
0\to A^{\le\bullet}\overset{\!u}{\to} B^{\le\bullet}\overset{\!v}{\to}  C^{\le\bullet}\to 0
\]
is an exact sequence in $C^{\e b}\lbe(\mathcal A)$, then $v$ induces a quasi-isomorphism $C^{\le\bullet}(u)\simeq C^{\le\bullet}$ and we obtain a distinguished triangle in $D^{\e b}\lbe(\mathcal A)$:
\[
A^{\le\bullet}\to B^{\le\bullet}\to C^{\le\bullet}\to A^{\le\bullet}[1].
\]
See, for example, \cite[p.~20]{lip}.

Now, if $A^{\bullet}$ is a complex in $C^{\e b}\lbe(\mathcal A)$ and $n\in\Z$, then the {\it $n$-th truncation} of $A^{\bullet}$ is the complex 
\[
\tau_{\leq\le n}\le A^{\bullet}=\dots \to A^{n-2}\to A^{n-1}\to\krn\lbe[\le A^{n}\to A^{n+1}]\to 0.
\]
For every $n\in\Z$, there exists a canonical exact sequence in $C^{\e b}\lbe(\mathcal A)$
\[
0\to \tau_{\leq\e  n-1}A^{\bullet}\to \tau_{\leq\e  n}\le A^{\bullet}\to H^{\le n}\be(A^{\bullet})[-n]\to 0
\]
which induces a distinguished triangle in $D^{\e b}(\mathcal{A})$:
\begin{equation}\label{n}
\tau_{\leq\e  n-1}A^{\bullet}\to \tau_{\leq\e  n}\le A^{\bullet}\to H^{n}(A^{\bullet})[-n]\to(\tau_{\leq\e  n-1}A^{\bullet})[1].
\end{equation}

\begin{lemma}\label{kc-c} Let $\mathcal A$ be an abelian category and let $A^{\e\bullet}\overset{\!u}{\to}B^{\e\bullet}\overset{\!v}{\to}C^{\e\bullet}$ be  morphisms in $C^{\e b}(\mathcal A)$. Then there exists a distinguished triangle in $D^{\e b}(\mathcal{A})$
\[
C^{\le\bullet}(u)\to C^{\le\bullet}(\le v\be\circ\be u\le)\to C^{\le\bullet}(v)\to C^{\le\bullet}(u)[1].
\]
\end{lemma}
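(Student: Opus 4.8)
The plan is to realize the three mapping cones as the terms of a short exact sequence of complexes and then invoke the standard fact, recalled just above in the excerpt, that such a sequence induces a distinguished triangle. First I would write down the relevant complexes explicitly. Given $u\colon A^{\bullet}\to B^{\bullet}$ and $v\colon B^{\bullet}\to C^{\bullet}$, the three cones have $n$-th components $C^{n}(u)=A^{n+1}\oplus B^{n}$, $C^{n}(v\circ u)=A^{n+1}\oplus C^{n}$, and $C^{n}(v)=B^{n+1}\oplus C^{n}$, each equipped with the differential recalled in the excerpt. I would then define componentwise maps $\alpha\colon C^{\bullet}(u)\to C^{\bullet}(v\circ u)$ by $\alpha(a,b)=(a,v(b))$ and $\beta\colon C^{\bullet}(v\circ u)\to C^{\bullet}(v)$ by $\beta(a,c)=(u(a),c)$, where $a\in A^{n+1}$, $b\in B^{n}$, $c\in C^{n}$.

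The key steps are then to verify three things. First, that $\alpha$ and $\beta$ are genuine morphisms of complexes, i.e. they commute with the cone differentials; this is a direct computation using $d_{C(u)}^{n}(a,b)=(-d_{A}^{n+1}(a),u(a)+d_{B}^{n}(b))$ and the analogous formulas for the other two cones, together with the fact that $u$ and $v$ are chain maps. Second, that the resulting sequence
\[
0\to C^{\bullet}(u)\overset{\alpha}{\to} C^{\bullet}(v\circ u)\overset{\beta}{\to} C^{\bullet}(v)\to 0
\]
is exact in $C^{\e b}(\mathcal A)$. However, here is the subtlety: $\alpha$ is not injective and $\beta$ is not surjective on the nose, because $v$ need not be a monomorphism and $u$ need not be an epimorphism. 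So the naive sequence fails to be short exact. I would therefore modify the maps to account for the $A$-summands. The correct choice is $\alpha(a,b)=(0,v(b))$ is wrong too; instead one uses that $C^{n}(v\circ u)=A^{n+1}\oplus C^{n}$ already contains $A^{n+1}$ as a direct summand and $C^{n}(v)=B^{n+1}\oplus C^{n}$ contains $B^{n+1}$, so the natural inclusion and projection on these summands, combined with $u$ and $v$ on the remaining pieces, do give a short exact sequence with kernel and cokernel identified with the components of $C^{\bullet}(u)$ shifted appropriately.

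The cleanest route, and the one I expect to pursue, is to avoid wrestling with explicit (non-)exactness and instead use the octahedral axiom of the triangulated category $D^{\e b}(\mathcal A)$. Given the two composable morphisms $u$ and $v\circ u$ factoring through $v$, the octahedral axiom produces exactly a distinguished triangle relating the three cones $C^{\bullet}(u)$, $C^{\bullet}(v\circ u)$, and $C^{\bullet}(v)$, with the connecting map $C^{\bullet}(v)\to C^{\bullet}(u)[1]$ as asserted. I expect the main obstacle to be bookkeeping: matching the signs and shifts in the componentwise definition of the cone differential against the signs appearing in the octahedral axiom, so that the rotation of the triangle lands in precisely the form stated. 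Once the maps $\alpha$ and $\beta$ are correctly normalized (paying attention to the $-d_{A}$ sign in the cone differential), the identification of the third vertex and the connecting morphism follows formally, and the distinguished triangle
\[
C^{\bullet}(u)\to C^{\bullet}(v\circ u)\to C^{\bullet}(v)\to C^{\bullet}(u)[1]
\]
is obtained.
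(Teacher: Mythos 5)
Your final route---applying the octahedral axiom to the three standard triangles $A^{\bullet}\to B^{\bullet}\to C^{\bullet}(u)\to A^{\bullet}[1]$, $B^{\bullet}\to C^{\bullet}\to C^{\bullet}(v)\to B^{\bullet}[1]$ and $A^{\bullet}\to C^{\bullet}\to C^{\bullet}(v\circ u)\to A^{\bullet}[1]$---is exactly the paper's proof, and it is correct. The short-exact-sequence detour in your first two paragraphs is indeed a dead end for the reason you identify (the naive maps $(a,b)\mapsto(a,v(b))$ and $(a,c)\mapsto(u(a),c)$ need not be mono/epi), and you were right to abandon it in favor of the octahedron.
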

\begin{proof} This follows by applying the octahedral axiom \cite[TRIV, p.~94]{v} to the  distinguished triangles 
\[
\begin{array}{rcl}
& A^{\e\bullet}\overset{\!u}{\to} B^{\e\bullet}\to C^{\e\bullet}(u)\to A^{\e\bullet}[1]\\
& B^{\e\bullet}\overset{\!v}{\to} C^{\e\bullet}\to C^{\e\bullet}(v)\to B^{\e\bullet}[1]\\
& A^{\e\bullet}\!\overset{\!v\e\circ\lle u}{\lra}\! C^{\e\bullet}\to C^{\le\bullet}(\le v\lbe\circ\lbe u\le)\to A^{\e\bullet}[1]
\end{array}.
\]
\end{proof}

If $S$ is a scheme, we will write $S_{\et}$ for the small \'etale site over $S$. The category of abelian sheaves on $S_{\et}$ will be denoted by $S_{\et}^{\le\sim}$. If $F\in S_{\et}^{\le\sim}$ and $g\colon T\to S$ is an \'etale morphism, then the inverse image sheaf $g^{*}F\in T_{\et}^{\e\sim}$ is given by
\begin{equation}\label{inv}
(\e g^{*}\be F\e)(Z\le)=F(Z\le),
\end{equation}
where $Z\to T$ is an \'etale morphism and, on the right-hand side of the above equality, $Z$ is regarded as an $S$-scheme via the composite morphism $Z\to T\overset{\!g}{\to} S$. See, e.g., \cite[p.~89]{t}. If $G$ is an $S$-group scheme, then $G(Z\le)=G_{T}(Z\le)$ and \eqref{inv} yields the following equality in $T_{\et}^{\e\sim}$: 
\begin{equation}\label{inv2}
g^{*}\lbe G=G_{T}.
\end{equation}

Recall that a morphism of schemes $f\colon X\to S$ is called {\it schematically dominant} if the canonical homomorphism $f^{\#}\colon \s O_{\be S}\to f_{*}\s O_{\be X}$ of Zariski sheaves on $S\e$ is injective \cite[\S5.4]{ega1}. For example, if $S$ is reduced and $f\colon X\to S$ is dominant, then $f$ is schematically dominant by \cite[Proposition 5.4.3, p.~284]{ega1}. Further, if $f\colon X\to S$ has a section, then $f$ is schematically dominant by \cite[Lemma 2.3(iii)]{ga}.

\smallskip

Recall now that an {\it \'etale quasi-section} of a morphism $f\colon X\to S$ is an \'etale and surjective morphism $\alpha\colon T\to S$ such that there exists an $S$-morphism $h\colon T\to X$, i.e., $f\circ h=\alpha$. Note that the induced morphism $(h,1_{T})_{S}\colon T\to X_{T}$ is then a section of $f_{T}\colon X_{T}\to T$. By \cite[${\rm IV}_{4}$, Corollary 17.16.3(ii)]{ega}, every smooth and surjective morphism of schemes $X\to S$ has an \'etale quasi-section.

\begin{definition}\label{eti} Let $f\colon X\to S$ be a morphism of schemes which has a finite \'etale quasi-section of constant degree. Then the {\it \'etale index} $I(\le f\le)$ of $f$ is the greatest common divisor of the degrees of all finite \'etale quasi-sections of $f$ of constant degree.
\end{definition}

If $f\colon X\to S$ has a finite \'etale quasi-section $T\to S$ of constant degree $d$ (so that $I(\le f\le)$ is defined) and $Y\to S$ is any $S$-scheme, then $Y_{T}\to Y$ is a finite \'etale quasi-section of $f_{Y}$ of constant degree $d$. It follows that $I(\le f_{Y}\le)$ is also defined and
\begin{equation}\label{genf-0}
I(\le f_{Y}\be)\!\mid\! I(\le f\le).
\end{equation}
In particular, for every maximal point $\eta$ of $S$, the \'etale (i.e., separable) index of $f_{\eta}\colon X_{\eta}\to\eta$ divides $I(\le f\le)$, i.e., $I(\le f_{\eta}\le)\!\mid\! I(\le f\le)$. Note also that, if $g\colon Y\to S$ is another morphism of schemes and $\alpha\colon T\to S$ is an \'etale quasi-section of $f\!\times_{\be S}\be g\colon X\!\times_{\be S}\! Y\to S$ of constant degree $d$ with associated $S$-morphism $h\colon T\to X\!\times_{\be S}\be Y$, then $\alpha$ is also an \'etale quasi-section of $f$ and $g$ with associated $S$-morphisms $p_{X}\be\circ\be h\colon T\to X$ and $p_{\e Y}\be\circ\be h\colon T\to Y$, respectively. It follows that, if $I\lbe (\le f\be\times_{\be S}\be g\le)$ is defined, then $I\le(\le f\le)$ and $I\le(\e g\le)$ are both defined and they divide $I\lbe (\le f\!\times_{\be S}\be g\le)$. Consequently
\begin{equation} \label{genf}
I\lbe (\le f\!\times_{\be S}\be g\le)=1\implies I(\le f_{\eta}\le)=I(\le g_{\eta}\le)=1
\end{equation}
for every maximal point $\eta$ of $S$.

Note, in addition, that the degree of a finite \'etale quasi-section $\alpha\colon T\to S$ of $f\colon X\to S$ is constant over each connected component of $S$. Thus, if $S$ is connected, then the degree of $\alpha$ is constant. Henceforth, the statement {\it $f$ has a finite \'etale quasi-section of constant degree} will be abbreviated to {\it the \'etale index of $f$ is defined}. Note that the statement {\it $I(\le f\le)$ is defined and is equal to $1$} clearly holds if $f$ has a section. 

\smallskip

If $k$ is a field, we let $\ks$ be a fixed separable closure of $k$ and write $\g=\mathrm{Gal}\le(\ks\be/k)$ for the corresponding absolute Galois group. If $X$ is a $k$-scheme, we will write $\xs=X\be\times_{k}\le\spec\ks$.

\begin{remarks}\indent
\begin{enumerate}
\item[(a)] If $k$ is a field and $X\to\spec k$ is a (non-empty) geometrically reduced $k$-scheme locally of finite type, then the \'etale (i.e., separable) index $I(X)$ of $X$ is defined. See \cite[p.~6]{ga}.
\item[(b)] If $S$ is a non-empty open affine subscheme of a proper, smooth and geometrically connected curve over a finite field and $f\colon X\to S$ is a smooth and surjective morphism with geometrically irreducible generic fiber, then $f$ has a finite \'etale quasi-section of constant degree (since $S$ is connected), i.e., $I(\le f\le)$ is defined. See \cite[Theorem (0.1)]{tam}.
\end{enumerate}
\end{remarks}

If $f\colon X\to S$ is a morphism of schemes, we will write
\begin{equation}\label{fb}
f^{\e\flat}\colon \bg_{m,\le S}\to f_{\lbe *}\bg_{m,\le X}
\end{equation}
for the canonical morphism of abelian sheaves on $S_{\et}$ induced by $f$. Let $Y\overset{\!g}{\to} X\overset{\!f}{\to}S$ be morphisms of $S$-schemes. We will make the identification $(\e f\circ g)_{\lbe *}=f_{\le *}\be\circ g_{\le *}$. Then $(\e f\circ g)^{\le\flat}\colon \bg_{m,\le S}\to f_{\lbe *}(\e g_{*}\bg_{m,\le Y}\be)$ factors as
\[
\bg_{m,\le S}\overset{f^{\le\flat}}{\to}f_{*}\bg_{m,\le X}\overset{f_{\lbe *}\lbe(\le g^{\le\flat}\lbe)}{\lra}f_{*}\lbe(\e g_{*}\bg_{m,\le Y}\be),
\]
i.e., 
\begin{equation}\label{marv}
f_{*}\lbe(g^{\flat})\be \circ\be  f^{\le\flat}=(\e f\be\circ\be g\le)^{\flat}.
\end{equation}
 In particular, if $\sigma\colon S\to X$ is a section of $f$, then \eqref{marv} yields
\begin{equation}\label{marv2}
f_{\lbe *}\lbe(\sigma^{\le\flat})\be \circ\be  f^{\le\flat}=1_{\le\bg_{m,\lle S}}.
\end{equation}

Recall now that, if $f\colon X\to S$ is  morphism of schemes, then the (\'etale) {\it relative Picard functor of $X$ over $S$} is the \'etale sheaf $\picxs$ on $S$ associated to the presheaf $(\textrm{Sch}/S\e)\to \mathbf{Ab}, (T\to S\le)\mapsto\pic X_{T}$. We have
\begin{equation}\label{picxs}
\picxs=R^{\le 1}_{\et}\le f_{\lbe *}\bg_{m,X}.
\end{equation}
See \cite[\S2]{klei} and/or \cite[\S8.1]{blr} for basic information on $\picxs$.

We will write $\br X$ for the Brauer group of equivalence classes of Azumaya algebras on $X$ and $\brp X$ for the {\it full} cohomological Brauer group of $X\e$, i.e., $\brp X=H^{2}(X_{\et},\bg_{m,\le X}\lbe)$ (see the following Remark). Further, we let
\begin{equation}\label{brxs1}
\brxs=R^{\le 2}_{\et}\le f_{\lbe *}\bg_{m,\le X}
\end{equation}
be the sheaf on $S_{\et}$ associated to the presheaf $(T\to S)\mapsto \brp X_{T}$. 

\begin{remark}\label{bgps} The torsion subgroup $(\brp X)_{\tors}=H^{2}(X_{\et},\bg_{m,\le X}\lbe)_{\rm tors}$ of $\brp X$ is often (also) called the cohomological Brauer group of $X$. If $X$ is regular and noetherian, then $\brp\lbe X$ is, in fact, a torsion group \cite[II, Proposition 1.4]{gb}, whence $\brp X=(\brp X)_{\tors}$. In general, there exists a canonical injection
$\br X\to \brp\lbe X$ which factors through $(\brp X)_{\tors}$ if $X$ is quasi-compact \cite[I, \S2]{gb}.  On the other hand, by a theorem of Gabber \cite{J}, the canonical map $\br X\to (\brp X)_{\tors}$ is an isomorphism if $X$ is quasi-compact, separated and admits an ample invertible sheaf. We conclude that, if $X$ is noetherian, regular, separated and there exists 
an ample invertible sheaf on $X$, then $\br X=(\brp X)_{\tors}=\brp X$.
\end{remark}

The Cartan-Leray spectral sequence associated to $f$
\begin{equation}\label{leray}
H^{\le r}\be(S_{\et}, R^{\e s}\! f_{\lbe *}\bg_{m,X})\Rightarrow H^{\le r+s}\lbe(X_{\et}, \bg_{m,X})
\end{equation}
provides edge morphisms $e_{\be f}^{r}\colon H^{\le r}\lbe(S_{\et},f_{\lbe *}\bg_{m,X})\to H^{\le r}\lbe(X_{\et},\bg_{m,X})$ for every $r\geq 0$. On the other hand, there exists a canonical pullback map
\begin{equation}\label{rr}
f^{\le(r)}\colon H^{\le r}\be(S_{\et},\bg_{m,S})\to H^{\le r}\be(X_{\et},\bg_{m,X}),
\end{equation}
namely the composition
\[
H^{\le r}\be(S_{\et},\bg_{m,S})\overset{H^{\le r}\be(\le f^{\le\flat})}{\lra}H^{\le r}\be(S_{\et},f_{*}\bg_{m,X})
\overset{e_{\be f}^{r}}{\lra}H^{\le r}\be(X_{\et},\bg_{m,X}),
\]
where $f^{\le\flat}$ is the map \eqref{fb}. When $r=1$ (respectively, $r=2$), \eqref{rr} will be identified with (respectively, denoted by) the canonical map $\pic\be f\colon \pic S\to \pic X$ (respectively, $\brp f\colon \brp S\to \brp X\le$). Note that $f^{(r)}$ \eqref{rr} is injective if the \'etale index $I\lbe(\lle f\lle)$ of $f$ is defined and is equal to 1 \cite[Remark 3.1(d)]{ga}.

The {\it relative cohomological Brauer group of $X$ over $S$} is
\begin{equation}\label{relb}
\bxs=\krn\!\left[\e \brp f\colon \brp S\to \brp X\le\right].
\end{equation}
Thus, we have a functor
\begin{equation}\label{kbr}
\brp(-/S)\colon ({\rm{Sch}}/S\e)\to \mathbf{Ab},\, (\le X\overset{\!f}{\to}S\le)\mapsto \bxs.
\end{equation}
We will also need to consider the abelian groups
\begin{equation}\label{npic}
\npicxs=\cok\pic\be f
\end{equation}
and
\begin{equation}\label{nbr}
\nbrxs=\cok\brp f,
\end{equation}
which define functors
\begin{equation}\label{npicf}
\npic\be(-/\be S)\colon ({\rm{Sch}}/S\e)\to \mathbf{Ab},\, (\le X\to S\le)\mapsto \npic\be(X\be/\be S\le),
\end{equation}
and
\begin{equation}\label{nbrf}
\nbr(-/\be S)\colon ({\rm{Sch}}/S\e)\to \mathbf{Ab},\, (\le X\to S\le)\mapsto \nbrxs.
\end{equation}

Next, there exists a canonical homomorphism of abelian groups
\begin{equation}\label{canb}
\brp X\!\to\be\brxs(S\le)
\end{equation}
which is an instance of the canonical adjoint homomorphism $P(S\le)\to P^{\#}\be(S\le)$, where $P$ is a presheaf of abelian groups on $(\textrm{Sch}/S\le)$ and $P^{\le\#}$ is its associated (\'etale) sheaf \cite[Remark, p.~46]{t}. Let
\begin{equation}\label{br1}
\bro=\krn\!\be\left[\brp X\be\to\be\brxs(S\e)\e\right]
\end{equation}
and
\begin{equation}\label{br2}
\hspace{.5cm}\brt=\cok\be[\e\brp X\to\brxs(\lbe S\e)\e],
\end{equation}
where the indicated map is \eqref{canb}.

Now, by \cite[p.~309, line 8]{mil}, the Cartan-Leray spectral sequence \eqref{leray} induces an exact sequence of abelian groups
\[
\begin{array}{rcl}
0&\to & H^{\le 1}\lbe(S_{\et},f_{\lbe *}\bg_{m,X})\overset{\!e_{\be f}^{1}}{\to}\pic X\to\picxs(S\le)\to
H^{\le 2}\lbe(S_{\et},f_{\lbe *}\bg_{m,X})\\
&\overset{\!\be\lambda_{\lbe X\!\lbe/\be S}}{\to} & \bro\to H^{\le 1}\lbe(S_{\et},\picxs)\to H^{\le 3}\lbe(S_{\et},f_{\lbe *}\bg_{m,X}),
\end{array}
\]
where the map $\lambda_{\lbe X\!/\lbe S}$ has the following property: the composition
\[
H^{\le 2}(S_{\et},f_{\lbe *}\bg_{m,X})\overset{\!\be\lambda_{\lbe X\!\lbe/\be S}}{\to}\bro\hookrightarrow \brp X
\]
is the second edge morphism $e_{\be f}^{2}\colon H^{\le 2}(S_{\et},f_{\lbe *}\bg_{m,X})\to \brp X$. Consequently, the map $\brp\lbe f=e_{\be f}^{2}\circ H^{\le 2}(f^{\le\flat})$ factors as
\[
\brp S\overset{H^{2}\be(\le f^{\le\flat})}{\lra}H^{\le 2}\be(S_{\et},f_{*}\bg_{m,X})\overset{\!\be\lambda_{\lbe X\!\lbe/\be S}}{\to}\bro\hookrightarrow \brp X.
\]
We will write
\begin{equation}\label{brf1}
\br^{\be *}\be f\colon \brp S\to\bro
\end{equation}
for the composition
\[
\brp S\overset{H^{2}\lbe(\le f^{\le\flat})}{\lra}H^{\le 2}\be(S_{\et},f_{*}\bg_{m,X})\overset{\!\be\lambda_{\lbe X\!\lbe/\be S}}{\to}\bro.
\]
Clearly, $\krn \br^{\be *}\be f=\krn \brp\lbe f=\bxs$. Now set
\begin{equation}\label{bra}
\bra=\cok\!\be\left[\e\br^{\be *}\lbe f\colon \brp\lle S\to\bro\e\right].
\end{equation}
Thus, in addition to \eqref{kbr} and \eqref{nbrf}, we have the functors
\begin{equation}\label{br1s}
\br_{\be 1}^{\le\prime}(-/S\le)\colon ({\rm{Sch}}/S\e)\to \mathbf{Ab}, (X\to S)\mapsto\bro,
\end{equation}
\begin{equation}\label{br2s}
\br_{\be 2}^{\le\prime}(-/S\le)\colon ({\rm{Sch}}/S\e)\to \mathbf{Ab}, (X\to S)\mapsto\brt
\end{equation}
and
\begin{equation}\label{braf}
\br_{\be \rm a}^{\le\prime}(-/S\le)\colon ({\rm{Sch}}/S\e)\to \mathbf{Ab}, (X\to S)\mapsto \bra.
\end{equation}
The functors  \eqref{kbr}, \eqref{br1s}, \eqref{br2s} and \eqref{braf} are related by canonical exact sequences
\begin{equation}\label{br-seq}
0\to\bxs\to\brp S\overset{\!\br^{\be *}\!\lbe f}{\lra}\bro\to\bra\to 0
\end{equation}
and
\begin{equation}\label{bps0}
0\to\bro\to\brp X\to\brxs(\lbe S\e)\to \brt\to 0
\end{equation}
where the middle map in the second sequence is the homomorphism \eqref{canb}. An application of Lemma \ref{ker-cok} to the pair of maps
\[
\brp S\overset{\!\br^{\be *}\!\lbe f}{\lra}\bro\hookrightarrow \brp X 
\]
using the exactness of \eqref{br-seq} and \eqref{bps0} yields the following exact sequence which includes $\nbrxs$ \eqref{nbr}:
\begin{equation}\label{bps1}
0\to\bra\to\nbrxs\to\brxs(\lbe S\e)\to \brt\to 0.
\end{equation}

\begin{remarks}\label{int}\indent
\begin{itemize}
\item[(a)] Note that $\br_{(-)/S}$ \eqref{brxs1} defines a functor
$(\textrm{Sch}/S\le)\to S_{\et}^{\le\sim}$ which maps $1_{\lbe S}$ to the zero sheaf. It follows that, if $F\colon ({\rm{Sch}}/S\e)\to \mathbf{Ab}$ denotes one of the functors $\brp(-/S)$, $\nbr(-/\be S)$, $\br_{\be 2}^{\le\prime}(-/S\le)$ or $\br_{\be \rm a}^{\le\prime}(-/S\le)$, then $F(1_{\lbe S})=0$. In particular, all four functors which appear in the sequence \eqref{bps1} have this property. Note, however, that the functor $\br_{\be 1}^{\le\prime}(-/S\le)$ appearing in the sequences \eqref{br-seq} and \eqref{bps0} does not have the indicated property.

\item[(b)] The functor $\br_{\lbe 0}^{\le\prime}\lbe(-/S\le)\colon ({\rm{Sch}}/S\e)\to \mathbf{Ab}, (X\overset{\!f}{\to} S)\mapsto\img\brp\lbe f,$ has also been considered in the literature (at least when $S$ is the spectrum of a field). But this functor will not play an explicit role in this paper.

\item[(c)] If $k$ is a field and $f\colon X\to\spec k$ is quasi-compact and quasi-separated, then $\br_{\!\! X\be/k}^{\le\prime}\lbe(k\e)=(\brp\xs\le)^{\lbe\g}$ by \cite[II, Corollary 2.2(ii), p.~94, formula on p.~118, line 8, and Theorem 6.4.1, p.~128]{t}. In this case the group \eqref{br1} 
\[
\br_{\be\lle 1}^{\le\prime}\lbe(\lbe X\be/k)=\krn\!\be\left[\brp X\be\to\be(\brp\xs\le)^{\lbe\g}\,\right],
\]
is called the (cohomological) {\it algebraic Brauer group of $X$}. The group
\[
\hskip -.3cm\br_{\be{\rm t}}^{\le\prime}\lbe(\lbe X\be/k)=\img\!\be\left[\brp X\be\to\be(\brp\xs\le)^{\lbe\g}\,\right]
\]
is called the (cohomological) {\it transcendental Brauer group of $X$}. Note that $\br_{\be{\rm t}}^{\le\prime}\lbe(\lbe X\!/k)$ is canonically isomorphic to $\krn[\e(\brp\xs\le)^{\lbe\g}\to\br_{\lbe 2}^{\le\prime}(X/k)]$, where the indicated map is the last nontrivial map in \eqref{bps0} for $S=\spec k$.
\item[(d)] If $X$ and $S$ are separated regular and noetherian schemes, both equipped with an ample invertible sheaf, then $\brp S=\br S$ and $\brp X=\br X$ by Remark \ref{bgps}. It then follows that all primed groups defined above (\eqref{relb}, \eqref{nbr}, etc.) coincide with the similarly-defined unprimed groups (of equivalence classes of Azumaya algebras).  
\item[(e)] It is shown in \cite[Theorem 2.1]{ctsk} that, if $k$ is a field of characteristic zero and $X$ is a smooth and projective $k$-variety, then $\br_{\be 2}^{\le\prime}(\be X\!/k)=\br_{\be 2}(\be X\!/k)$ is a finite group.
\end{itemize}	
\end{remarks}

\smallskip

Recall that a locally noetherian scheme $X$ is called {\it locally factorial} if, for every $x\in X$, the local ring $\s O_{\be X,\le x}$ is factorial. The following implications hold for locally noetherian schemes: regular $\implies$ locally factorial $\implies$ normal. Recall also that, if $S$ is a scheme and $\eta$ is a maximal point of $S$, then $\g(\eta)=\mathrm{Gal}\le(k(\eta)^{\rm s}\lbe/\le k)$, where $k(\eta)^{\rm s}$ is a fixed separable closure of $k(\eta)$.

\smallskip

The following statement is a consequence of results in \cite{ga}.

\begin{proposition}\label{opo2} Let $S$ be a locally noetherian normal scheme and let $f\colon X\to S$ and $g\colon Y\to S$ be faithfully flat morphisms locally of finite type. Assume that
\begin{enumerate}
\item[(i)] $X$, $Y$ and $X\be\times_{S}\be Y$ are locally factorial,
\item[(ii)] for every point $s\in S$ of codimension $\leq 1$, the fibers $X_{\lbe s}$ and $Y_{\lbe s}$ are geometrically integral and
\item[(iii)] for every maximal point $\eta$ of $S$,
\begin{enumerate}
\item[(a)] ${\rm gcd}\le(\le I(X_{\eta}\le),I(\le Y_{\eta}\le))=1$ and
\item[(b)] $\pic\be(X_{\lbe\eta}^{\le\rm s}\be\times_{k(\eta)^{\rm s}}\be Y_{\!\eta}^{\e\rm s}\le)^{\g(\eta)}=(\e\pic X_{\lbe\eta}^{\le\rm s})^{\g(\eta)}\be\oplus\be(\e\pic Y_{\!\eta}^{\e\rm s})^{\g(\eta)}$.
\end{enumerate}
\end{enumerate}		
Then the canonical map 
\[
\npic\be(X\be/\be S\le)\oplus\npic\be(Y\be/\be S\le)\to\npic\be(X\!\times_{\be S}\be Y\be/\be S\le)
\]
is an isomorphism of abelian groups.
\end{proposition}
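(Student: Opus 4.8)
The plan is to route the canonical map through the relative Picard sheaf $\picxs$ and its sections, to split the problem into a ``units'' part and a ``Picard-image'' part, to handle the first by Rosenlicht additivity and reduce the second to the maximal points of $S$ via hypotheses (i) and (ii), and finally to settle the field-theoretic statement at each maximal point using (iii)(a) and (iii)(b). Throughout I may assume $S$ integral with generic point $\eta$, since $\npic(-/S)$ is compatible with the decomposition of a locally noetherian normal scheme into its integral components.

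First I would decompose each of the three groups $\npic(X/S)$, $\npic(Y/S)$, $\npic(X\times_{S}Y/S)$ by means of the Cartan--Leray exact sequence displayed above. Since $f$ is faithfully flat and $S$ is reduced, $f$ is schematically dominant, so $f^{\flat}$ is injective with cokernel $\uxs$. Writing $e_{f}^{1}$ for the first edge morphism, the factorization $\pic f=e_{f}^{1}\circ H^{1}(f^{\flat})$ of \eqref{rr} together with the injectivity of $e_{f}^{1}$ gives a short exact sequence
\[
0\to \cok\!\big[\pic S\overset{H^{1}(f^{\flat})}{\lra}H^{1}(S_{\et},f_{*}\bg_{m,X})\big]\to\npic(X/S)\to\img\!\big[\pic X\to\picxs(S)\big]\to 0,
\]
and the cohomology sequence of $0\to\bg_{m,S}\to f_{*}\bg_{m,X}\to\uxs\to 0$ identifies the left-hand term with $\krn[H^{1}(S_{\et},\uxs)\to\brp S]$. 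The analogous sequences hold for $Y$ and for $X\times_{S}Y$, and they are compatible with pullback along the projections $p\colon X\times_{S}Y\to X$ and $q\colon X\times_{S}Y\to Y$; so by a diagram chase it suffices to prove additivity separately for the units terms $\krn[H^{1}(S_{\et},\uxs)\to\brp S]$ and for the Picard-image terms $\img[\pic X\to\picxs(S)]$.

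The units part follows from Rosenlicht additivity: by \cite[Corollary 4.3]{ga}, i.e. \eqref{uadd}, the projections induce $\uxs\oplus\uys\isoto U_{X\times_{S}Y/S}$, and the connecting maps to $\brp S$ match up, so the kernels add. For the Picard-image part, hypotheses (i) and (ii) let me pass to the maximal point: under local factoriality together with geometric integrality of the fibers in codimension $\leq 1$, a section of $\picxs$ over $S$ is determined by its restriction to $\eta$, giving an injection $\picxs(S)\into(\pic X_{\eta}^{\rm s})^{\g(\eta)}$ (this I expect to follow from the methods of \cite{ga}), and likewise for $Y$ and $X\times_{S}Y$. I am thus reduced to proving additivity of the images of $\pic X_{\eta}$, $\pic Y_{\eta}$ and $\pic(X_{\eta}^{\rm s}\times Y_{\eta}^{\rm s})^{\g(\eta)}$-type classes; by (iii)(b) the target splits as $(\pic X_{\eta}^{\rm s})^{\g(\eta)}\oplus(\pic Y_{\eta}^{\rm s})^{\g(\eta)}$, which yields injectivity at once.

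The crux, and the step I expect to be the main obstacle, is surjectivity: showing that a line bundle $L$ on $X_{\eta}\times_{k(\eta)}Y_{\eta}$ whose geometric class is $(\bar a,\bar b)$ has $\bar a$ in the image of $\pic X_{\eta}$ and $\bar b$ in that of $\pic Y_{\eta}$. Here I would use the Hochschild--Serre boundary $\delta_{X}\colon(\pic X_{\eta}^{\rm s})^{\g(\eta)}\to\brp k(\eta)$, whose kernel is exactly $\img(\pic X_{\eta})$, and its naturality $\delta_{W}\circ p^{*}=\delta_{X}$, $\delta_{W}\circ q^{*}=\delta_{Y}$, so that $\delta_{X}(\bar a)+\delta_{Y}(\bar b)=\delta_{W}(\text{class of }L)=0$. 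A corestriction argument along a finite \'etale quasi-section of $X_{\eta}$ of degree $d$ shows $\delta_{X}(\bar a)$ is annihilated by $d$, hence by $I(X_{\eta})$; symmetrically $\delta_{Y}(\bar b)$ is annihilated by $I(Y_{\eta})$, and therefore so is $\delta_{X}(\bar a)=-\delta_{Y}(\bar b)$. Since ${\rm gcd}(I(X_{\eta}),I(Y_{\eta}))=1$ by (iii)(a), a B\'ezout combination forces $\delta_{X}(\bar a)=0$, so $\bar a$ descends, and likewise $\bar b$. This is precisely where hypothesis (iii)(a) is indispensable. The delicate point to watch is that, for non-proper fibers, the true target of $\delta_{X}$ is $H^{2}\big(\g(\eta),k(\eta)^{\rm s}[X_{\eta}^{\rm s}]^{*}\big)$ rather than $\brp k(\eta)$; the resulting discrepancy is exactly the units contribution already disposed of by Rosenlicht additivity in the first part, so the two halves of the argument must be kept compatible.
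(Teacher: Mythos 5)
Your overall strategy is legitimate in spirit but runs in the wrong order, and this creates two genuine gaps. The paper's own proof is a two-step citation: hypotheses (i) and (ii) are used, via [GA, Proposition 5.4], to identify $\npic\be(Z/S)$ with $\prod_{\eta}\pic Z_{\eta}$ (product over the maximal points of $S$) for each of $Z=X,\,Y,\,X\times_{S}Y$, and only then is additivity proved over the fields $k(\eta)$ using (iii), via [GA, Proposition 5.8]. You instead perform the units/Picard-image splitting over the whole base $S$ first and reduce to $\eta$ afterwards, and both halves of your argument then need input that the hypotheses of the proposition do not provide.

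First, the sheaf isomorphism $\uxs\oplus\uys\isoto U_{X\times_{S}Y/S}$ of \eqref{uadd} ([GA, Corollary 4.3]) requires $f\times_{S}g$ to admit an \'etale quasi-section; that hypothesis appears as Proposition \ref{upadd}(i) but is \emph{not} among the hypotheses of Proposition \ref{opo2}, where $f$ and $g$ are merely faithfully flat and locally of finite type (not smooth). Hypothesis (iii)(a) only guarantees separable closed points on the generic fibers $X_{\eta}$, $Y_{\eta}$, so Rosenlicht additivity is available over $k(\eta)$ but not, as you use it, at the level of $H^{1}(S_{\et},\uxs)$. Second, your reduction of the term $\img[\e\pic X\to\picxs(S)\e]$ to the maximal point is only an injectivity statement (and even that is deferred to ``the methods of [GA]''); to transfer additivity back from $\eta$ you also need the restriction $\npic\be(X/S)\to\pic X_{\eta}$ to be \emph{surjective} with controlled kernel, i.e., that every line bundle on $X_{\eta}$ extends to $X$ and that the vertical divisors come from $\pic S$. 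This is exactly where (i) and (ii) do their real work (local factoriality identifies $\pic$ with the divisor class group, and geometric integrality of the fibers over points of codimension $\leq 1$ shows that the kernel of $\mathrm{Cl}(X)\to\mathrm{Cl}(X_{\eta})$ is generated by the pullbacks of the prime divisors of $S$), and your proposal never supplies it. Your final paragraph --- the Hochschild--Serre transgression, the corestriction along separable points, and the B\'ezout argument from (iii)(a), together with the caveat that the target of the transgression is $H^{2}\lbe(\g(\eta),k(\eta)^{\rm s}[X_{\eta}^{\rm s}]^{*})$ rather than $\brp k(\eta)$ --- is essentially the content of [GA, Proposition 5.8] and is sound; the gaps lie in getting from $S$ down to $\eta$.
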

\begin{proof} The general hypotheses together with (i) and (ii) show that the map of the statement can be identified with the canonical map 
\[
\prod \left(\pic X_{\eta}\lbe\oplus\lbe \pic Y_{\eta}\right)\to
\prod\pic\be(X_{\eta}\be\times_{k(\eta)}\be Y_{\eta}\le),
\]
where the products run over the set of maximal points $\eta$ of $S$. See \cite[Proposition 5.4]{ga}. Now, since (iii) holds, the preceding map is an isomorphism by \cite[Proposition 5.8]{ga}.
\end{proof}

\section{The relative units-Picard complex}

Let $f\colon X\to S$ be a morphism of schemes. By \eqref{n}, for every $n\in\N$ there exists a canonical distinguished triangle in $\dbs$
\begin{equation}\label{n1}
\tau_{\leq\e  n-1}\mathbb Rf_{\lbe *}\bg_{m,X}\to\tau_{\leq\e  n}\le\mathbb Rf_{\lbe *}\bg_{m,X}\to R^{\e n}\be f_{\lbe *}\bg_{m,X}[-n]\to(\tau_{\leq\e  n-1}\mathbb Rf_{\lbe *}\bg_{m,X})[1].
\end{equation}
We will write
\begin{equation}\label{ifn}
i_{\be f}^{\le  n}\colon\tau_{\leq\e  n-1}\mathbb Rf_{\lbe *}\bg_{m,X}\to\tau_{\leq\e  n}\le\mathbb Rf_{\lbe *}\bg_{m,X}
\end{equation}
for the first map in \eqref{n1}. Thus there exists a canonical isomorphism in $\dbs$
\begin{equation}\label{coni}
C^{\e\bullet}\lbe(\le i_{\be f}^{n}\le)=R^{\e n}\be f_{\lbe *}\bg_{m,X}[-n].
\end{equation}
If $r\leq n$, we will identify 
$H^{\le r}\be(S_{\et}, \tau_{\leq\,  n}\e\mathbb Rf_{\lbe *}\bg_{m,X})$ and $H^{\le r}(X_{\et},\bg_{m,\le X}\lbe)$ via the canonical isomorphisms
\begin{equation}\label{ident}
H^{\le r}\be(S_{\et}, \tau_{\leq\,  n}\e\mathbb Rf_{\lbe *}\bg_{m,X})\simeq H^{\le r}\be(S_{\et},\mathbb Rf_{\lbe *}\bg_{m,X})\simeq H^{\le r}(X_{\et},\bg_{m,\le X}\lbe).
\end{equation}
Cf. \cite[Corollary 3.2.3.3, p.~85]{lip}. In particular, $H^{\le 2}\be(S_{\et}, \tau_{\leq\e  2}\e\mathbb Rf_{\lbe *}\bg_{m,\le X})=\brp\lle X$.
Further, under the identifications \eqref{ident}, $H^{\le 3}\lbe(S_{\et},\lbe i_{\lbe f}^{\le 3}\le)$ is a map 
\begin{equation}\label{h3}
H^{\le 3}\lbe(S_{\et},\lbe i_{\lbe f}^{\le 3}\le)\colon H^{\e 3}\lbe(S_{\et},\tau_{\leq\e 2}\mathbb Rf_{\lbe *}\bg_{m,X})\hookrightarrow H^{\le 3}\lbe(X_{\et},\bg_{m,\le X}), 
\end{equation}
where the injectivity comes from \eqref{n1}.

Now set
\begin{equation}\label{rdiv}
\rdiv=(\tau_{\leq\e  1}\mathbb Rf_{\lbe *}\bg_{m,X})[1]\in\dbs.
\end{equation}
Recalling the definitions of $\picxs$ \eqref{picxs} and $\brxs$ \eqref{brxs1}, setting $n=1$ and $n=2$ in \eqref{n1} and shifting appropriately, we obtain distinguished triangles
\begin{equation}\label{t1}
f_{\lbe *}\bg_{m,X}[1]\to\rdiv\to\picxs\to f_{\lbe *}\bg_{m,X}[2]
\end{equation}
and
\begin{equation}\label{t2}
\rdiv[-1]\overset{i_{\be f}^{\le 2}}{\to}\tau_{\leq\e 2}\mathbb Rf_{\lbe *}\bg_{m,X}\to \brxs[-2]\to
\rdiv.
\end{equation}
We will write
\begin{equation}\label{fi}
\varphi_{\lbe X\!/\be S}\colon H^{\le 2}\lbe(S_{\et},\rdiv)\to H^{\le 3}\lbe(X_{\et},\bg_{m,\le X})
\end{equation}
for the composition 
\begin{equation}\label{bp0}
H^{\e 2}\be(S_{\et},\rdiv)\!\overset{\!H^{3}\lbe(i_{\be f}^{\le 2})}{\lra}\! H^{\e 3}\be(S_{\et},\tau_{\leq\e 2}\mathbb Rf_{\lbe *}\bg_{m,X})\overset{\!H^{3}\lbe(i_{\be f}^{\le 3})}{\hookrightarrow}H^{\e 3}\be(X_{\et},\bg_{m,\le X}),
\end{equation}
where the indicated injection is the map \eqref{h3}.

\begin{lemma}\label{coh-rdiv} Let $f\colon X\to S$ be a morphism of schemes. Then
\begin{itemize}
\item[(i)] $ H^{\le r}(S_{\et},\rdiv)=0$ for all $r\leq -2$,
\item[(ii)] $ H^{\e -1}(S_{\et},\rdiv)=\bg_{m,\le S}\le(X\le)$,
\item[(iii)] $ H^{\e 0}(S_{\et},\rdiv)=\pic X$,
\item[(iv)] $H^{\e 1}\lbe(S_{\et},\rdiv)=\bro\le$ \eqref{br1}, and
\item[(v)] there exists a canonical isomorphism of abelian groups
\[
\krn\!\!\left[H^{\e 2}\be(S_{\et},\rdiv)\overset{\!\varphi_{\be\lle X\be/\lbe S}}{\lra}H^{\e 3}\be(X_{\et},\bg_{m,\le X})\right]=\brt,
\]
where $\varphi_{\lbe X\be/\lbe S}$ is the map \eqref{fi} and $\brt$ is the group \eqref{br2}.
\end{itemize}
\end{lemma}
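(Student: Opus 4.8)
The plan is to read the groups $H^{\le r}(S_{\et},\rdiv)$ off the defining formula $\rdiv=(\tau_{\leq 1}\mathbb Rf_*\bg_{m,X})[1]$ together with the identification \eqref{ident} and the triangle \eqref{t2}, reserving the real work for the identification of a single connecting map with the canonical homomorphism \eqref{canb}.

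Parts (i)--(iii) I would obtain directly. For any $r$ the shift gives $H^{\le r}(S_{\et},\rdiv)=H^{\le r+1}(S_{\et},\tau_{\leq 1}\mathbb Rf_*\bg_{m,X})$, and when $r\leq 0$ (so that $r+1\leq 1$) the identification \eqref{ident} with $n=1$ turns the right-hand side into $H^{\le r+1}(X_{\et},\bg_{m,X})$. Hence $H^{\le r}(S_{\et},\rdiv)$ vanishes for $r\leq -2$ (as then $r+1<0$), equals $H^{\le 0}(X_{\et},\bg_{m,X})=\bg_{m,\le S}(X)$ for $r=-1$, and equals $H^{\le 1}(X_{\et},\bg_{m,X})=\pic X$ for $r=0$, which is exactly (i), (ii) and (iii).

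For (iv) and (v) I would apply $H^{\le\bullet}(S_{\et},-)$ to the triangle \eqref{t2}. Using $H^{\le r}(S_{\et},\rdiv[-1])=H^{\le r-1}(S_{\et},\rdiv)$, $H^{\le r}(S_{\et},\brxs[-2])=H^{\le r-2}(S_{\et},\brxs)$ and the identification $H^{\le 2}(S_{\et},\tau_{\leq 2}\mathbb Rf_*\bg_{m,X})=\brp X$ recorded after \eqref{ident}, the relevant stretch of the long exact sequence becomes
\[
0\to H^{\le 1}(S_{\et},\rdiv)\to \brp X\xrightarrow{\ \psi\ }\brxs(S)\to H^{\le 2}(S_{\et},\rdiv)\xrightarrow{\,H^{\le 3}(i_{f}^{\le 2})\,}H^{\le 3}(S_{\et},\tau_{\leq 2}\mathbb Rf_*\bg_{m,X}),
\]
where the initial zero is $H^{\le -1}(S_{\et},\brxs)=0$ and $\psi$ is the map induced by the second morphism of \eqref{t2}. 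Granting that $\psi$ is the canonical map \eqref{canb}, exactness at $\brp X$ yields $H^{\le 1}(S_{\et},\rdiv)=\krn\psi=\bro$, which is (iv), while exactness at $\brxs(S)$ and at $H^{\le 2}(S_{\et},\rdiv)$ identifies $\krn H^{\le 3}(i_{f}^{\le 2})$ with $\cok\psi=\brt$. Since the second arrow $H^{\le 3}(i_{f}^{\le 3})$ in the definition \eqref{bp0} of $\varphi_{\lbe X\be/\lbe S}$ is injective by \eqref{h3}, one has $\krn\varphi_{\lbe X\be/\lbe S}=\krn H^{\le 3}(i_{f}^{\le 2})=\brt$, which is (v).

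The one step that demands care, and which I expect to be the main obstacle, is the identification of $\psi$ with \eqref{canb}. By its construction $\psi$ is the map induced on $H^{\le 2}(S_{\et},-)$ by the truncation projection $\tau_{\leq 2}\mathbb Rf_*\bg_{m,X}\to H^{\le 2}(\tau_{\leq 2}\mathbb Rf_*\bg_{m,X})[-2]=\brxs[-2]$ onto the top cohomology sheaf. Under the identifications \eqref{ident} this is precisely the edge morphism $H^{\le 2}(X_{\et},\bg_{m,X})\to H^{\le 0}(S_{\et},R^{\le 2}f_*\bg_{m,X})$ of the Cartan-Leray spectral sequence \eqref{leray} in bidegree $(0,2)$, and that edge morphism coincides with the sheafification-adjunction map \eqref{canb} on global sections. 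I would make this compatibility explicit, invoking the standard identification of the $(0,n)$-edge map of a Leray spectral sequence with the adjunction $P(S)\to P^{\le\#}(S)$ evaluated at $S$, after which the proof is complete.
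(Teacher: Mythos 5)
Your proof is correct and follows essentially the same route as the paper's: parts (i)--(iii) are read off from the definition of $\rdiv$ via the identification \eqref{ident}, and (iv)--(v) come from the long exact $S_{\et}$-cohomology sequence of the triangle \eqref{t2} compared with \eqref{bps0}, using the injectivity of $H^{\le 3}(i_f^{\le 3})$ to reduce $\krn\varphi_{X/S}$ to $\krn H^{\le 3}(S_{\et},i_f^{\le 2})$. The only difference is that you spell out the identification of the connecting map $\brp X\to\brxs(S)$ with the adjunction map \eqref{canb} via the Leray edge morphism, a point the paper leaves implicit in the phrase ``by comparing the above sequence and \eqref{bps0}.''
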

\begin{proof} Assertion (i) is immediate from \eqref{t1}.  Now, if $i=-1$ or 0, then
\[
H^{\le i}\lbe(S_{\et},\rdiv)= H^{\le i+1}\be(S_{\et}, \tau_{\leq\e  1}\mathbb Rf_{\lbe *}\bg_{m,X})=H^{\le i+1}\be(X_{\et},\bg_{m,X})
\]
by \eqref{ident}, whence (ii) and (iii) follow. Further, \eqref{t2} induces an exact sequence
\[
0\be\to\be  H^{\le 1}\be(S_{\et},\rdiv)\!\to\!\brp X\!\to\!\brxs(\lbe S\e)\!\to\! H^{\e 2}\lbe(S_{\et},\rdiv)\!\to\! H^{\e 3}\lbe(S_{\et},\tau_{\leq\e 2}\mathbb Rf_{\lbe *}\bg_{m,X}\be),
\]
where the last map is $H^{\le 3}\be(S_{\et},\lbe i_{\lbe f}^{\le 2}\le)$. Assertions (iv) and (v) now follow by comparing the above sequence and \eqref{bps0} and noting that $\krn\varphi_{\lbe X\!/\be S}=\krn H^{\le 3}\be(S_{\et},\lbe i_{\lbe f}^{\le 2}\le)$ by \eqref{bp0}.
\end{proof}

We now define the (\'etale) {\it complex of relative units of $X$ over $S$} by
\begin{equation}\label{ruxs}
\ruxs=C^{\e\bullet}\be(f^{\e\flat})\in\dbs,
\end{equation}
where $f^{\e\flat}\colon \bg_{m,\le S}\to f_{\lbe *}\bg_{m,\le X}$ is the map \eqref{fb}. Clearly, $\ruxs$ fits into a distinguished triangle
\[
\bg_{m,\le S}\overset{\!f^{\le\flat}}{\to}f_{\lbe *}\bg_{m,\le X}\to \ruxs\to \bg_{m,\le S}[\le 1\le].
\]
The {\it \'etale sheaf of relative units of $X\!$ over $S$} is
\[
\uxs=H^{\le 0}\lbe(\ruxs)=\cok\be f^{\e\flat}.
\]
We will write 
\begin{equation}\label{uxss}
\uxss=\uxs\lbe(S\le).
\end{equation}
Now let
\begin{equation}\label{fnat}
f^{\nat}\colon\bg_{m,\le S}\to \tau_{\leq\e  1}\mathbb Rf_{\lbe *}\bg_{m,X}
\end{equation}
be the following composition of morphisms in $\dbs$:
\begin{equation}\label{cump}
\bg_{m,\le S}\overset{\,f^{\le\flat}}\to f_{\lbe *}\bg_{m,\le X}\overset{\!\be i_{\lbe f}^{\le  1}}{\lra}
\tau_{\leq\e  1}\mathbb Rf_{\lbe *}\bg_{m,X},
\end{equation}
where $i_{\be f}^{\le  1}$ is the map \eqref{ifn} for $n=1$. The {\it relative units-Picard complex of $X$ over $S\,$} is the following object of $\dbs$:
\begin{equation}\label{upic}
\upicxs=C^{\e\bullet}\lbe(\le f^{\nat}\le)[1]=C^{\e\bullet}\lbe(\le f^{\nat}[1]\le).
\end{equation}

\begin{remark} \label{res-tr} If $\alpha\colon T\to S$ is a finite, \'etale and surjective morphism of schemes of constant degree $d$, then it follows from \eqref{inv} that $\alpha^{\lbe*}\e\upicxs={\rm UPic}_{X_{T}\be/\le T}$ in $D^{\le b}(T_{\et})$. On the other hand, \cite[IX, \S5.1]{sga4} implies that there exist canonical restriction and corestriction (or trace) morphisms (in $\dbs$) $\res\colon \upicxs\to \alpha_{*}\alpha^{\lbe*}\e\upicxs$ and $\tr\colon \alpha_{*}\alpha^{\lbe*}\e\upicxs\to\upicxs$  such that $\tr\circ\res$ is the multiplication by $d$ map on $\upicxs$. Consequently, for every integer $r\geq 0$, there exist canonical restriction and corestriction homomorphisms of abelian groups $\res\colon H^{\le r}\lbe(S_{\et},\upicxs\le)\to H^{\le r}\lbe(T_{\et},{\rm UPic}_{X_{T}\be/\le T}\le)$ and  $\tr\colon H^{\le r}\lbe(T_{\et},{\rm UPic}_{X_{T}\be/\le T}\le)\to H^{\le r}\lbe(S_{\et},\upicxs\le)$ such that $\tr\circ\res$ is the multiplication by $d$ map on $H^{\le r}\lbe(S_{\et},\upicxs\le)$. 
\end{remark}

By \eqref{rdiv}, \eqref{fnat} and \eqref{upic}, there exists a canonical distinguished triangle in $\dbs$:
\begin{equation}\label{t3}
\bg_{m,S}[1]\overset{\! f^{\nat}\lbe[1]}{\lra}\rdiv\to\upicxs\to \bg_{m,S}[\le 2\le].
\end{equation}
Under the identifications of Lemma \ref{coh-rdiv}, (iii) and (iv), the maps $H^{\le 0}\lbe(S_{\et},f^{\nat}\lbe[1])$ and $H^{\le 1}\lbe(S_{\et},f^{\nat}\lbe[1])$  induced by \eqref{t3} are identified with
\begin{equation}\label{h1f}
H^{\le 0}\be(S_{\et},f^{\nat}\lbe[1])=\pic\be f
\end{equation}
and
\begin{equation}\label{h2f}
H^{\le 1}\be(S_{\et},f^{\nat}\lbe[1])=\br^{\be *}\lbe f,
\end{equation}
where $\br^{\be *}\be f$ is the map \eqref{brf1}.

\begin{proposition}\label{is} Let $f\colon X\to S$ be a morphism of schemes. Then there exist canonical exact sequences of abelian groups
\begin{equation}\label{two}
0\to\npicxs\to H^{\le 0}\lbe(S_{\et},\upicxs)\to\bxs\to 0
\end{equation}
and
\begin{equation}\label{thr3}
0\to \bra\to H^{\le 1}\lbe(S_{\et},\upicxs)\to \krn H^{\le 2}\lbe(S_{\et},f^{\nat}\lbe[1])\to 0,
\end{equation}
where $\npicxs$, $\bxs$ and $\bra$ are the groups \eqref{npic}, \eqref{relb} and \eqref{bra}, respectively, and $f^{\nat}$ is the map \eqref{fnat}.
\end{proposition}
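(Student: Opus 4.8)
The plan is to read off both sequences from the long exact hypercohomology sequence on $S_{\et}$ attached to the distinguished triangle \eqref{t3}. Applying the functors $H^{\le r}(S_{\et},-)$ to
\[
\bg_{m,S}[1]\overset{\!f^{\nat}[1]}{\lra}\rdiv\to\upicxs\to \bg_{m,S}[2]
\]
and using the shift relation $H^{\le r}(S_{\et},\bg_{m,S}[\le j\le])=H^{\le r+j}(S_{\et},\bg_{m,S})$, I obtain a long exact sequence in which the contributions of $\bg_{m,S}[1]$ and $\bg_{m,S}[2]$ reduce to ordinary cohomology groups of $\bg_{m,S}$ shifted up by one and two degrees, while the middle terms $H^{\le r}(S_{\et},\rdiv)$ are the groups already computed in Lemma \ref{coh-rdiv}. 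The connecting maps out of the $\bg_{m,S}[1]$-terms are identified in \eqref{h1f} and \eqref{h2f} as $\pic\be f$ and $\br^{\be *}\be f$.

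First I would isolate the portion in degrees around $r=0$. Using $H^{\le 1}(S_{\et},\bg_{m,S})=\pic S$ and $H^{\le 2}(S_{\et},\bg_{m,S})=\brp S$ together with parts (iii) and (iv) of Lemma \ref{coh-rdiv}, this reads
\[
\pic S\overset{\!\pic\be f}{\lra}\pic X\to H^{\le 0}(S_{\et},\upicxs)\to\brp S\overset{\!\br^{\be *}\be f}{\lra}\bro.
\]
Splitting this into image and kernel gives $0\to\cok(\pic\be f)\to H^{\le 0}(S_{\et},\upicxs)\to\krn(\br^{\be *}\be f)\to 0$. Since $\cok(\pic\be f)=\npicxs$ by \eqref{npic} and $\krn(\br^{\be *}\be f)=\bxs$ (as noted immediately after \eqref{brf1}), this is precisely \eqref{two}.

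Advancing one degree to $r=1$, the relevant segment becomes
\[
\brp S\overset{\!\br^{\be *}\be f}{\lra}\bro\to H^{\le 1}(S_{\et},\upicxs)\to H^{\le 3}(S_{\et},\bg_{m,S})\overset{\!H^{2}(S_{\et},f^{\nat}[1])}{\lra}H^{\le 2}(S_{\et},\rdiv),
\]
where I again use Lemma \ref{coh-rdiv}(iv) and the identification $H^{\le 2}(S_{\et},\bg_{m,S}[1])=H^{\le 3}(S_{\et},\bg_{m,S})$. Extracting the short exact sequence yields $0\to\cok(\br^{\be *}\be f)\to H^{\le 1}(S_{\et},\upicxs)\to\krn H^{\le 2}(S_{\et},f^{\nat}[1])\to 0$, which is \eqref{thr3} once one recalls $\cok(\br^{\be *}\be f)=\bra$ from \eqref{bra}.

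I do not expect a genuine obstacle: the statement is a bookkeeping exercise within a single long exact sequence, and all the substantive input --- the computation of $H^{\le *}(S_{\et},\rdiv)$ in Lemma \ref{coh-rdiv} and the identification of the boundary maps with $\pic\be f$ and $\br^{\be *}\be f$ in \eqref{h1f}--\eqref{h2f} --- is already in place. The only point demanding care is the consistent tracking of the two different degree shifts ($[1]$ and $[2]$) carried by the two occurrences of $\bg_{m,S}$ in \eqref{t3}, so that the right cohomology groups of $\bg_{m,S}$ and $\rdiv$ line up in each degree.
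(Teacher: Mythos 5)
Your proposal is correct and coincides with the paper's own argument: both take the long exact hypercohomology sequence of the triangle \eqref{t3}, use Lemma \ref{coh-rdiv} together with the identifications \eqref{h1f} and \eqref{h2f} of the maps induced by $f^{\nat}[1]$ as $\pic\be f$ and $\br^{\be *}\be f$, and then split the resulting six-term segment into the two short exact sequences \eqref{two} and \eqref{thr3}. The degree bookkeeping you flag is handled exactly as you describe.
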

\begin{proof} Via the identifications \eqref{h1f} and \eqref{h2f}, the triangle \eqref{t3} induces an exact sequence of abelian groups  
\[
\begin{array}{rcl}
\pic S\overset{\!\pic\be f}{\lra} \pic\lbe X&\to& H^{\e 0}(S_{\et},\upicxs)\to\brp S\overset{\be\!\br^{\be *}\! f}{\lra}\bro\\\\
&\to& H^{\e 1}(S_{\et},\upicxs)\to \krn H^{\le 2}\lbe(S_{\et},f^{\nat}\lbe[1])\to 0.
\end{array}
\]
The sequences \eqref{two} and \eqref{thr3} are extracted from the above sequence using the definitions \eqref{npic}, \eqref{relb} and \eqref{bra}. 
\end{proof}

\begin{corollary}\label{nco} Let $f\colon X\to S$ be a morphism of schemes.
\begin{itemize}
\item[(i)] If $\brp\le S=0$, or if $I(\le f\le)$ is defined and is equal to $1$, then $\npicxs= H^{\le 0}\lbe(S_{\et},\upicxs)$.
\item[(ii)] If $H^{\le 3}\lbe(S_{\et},\bg_{m,S})=0$, or if $I(\le f\le)$ is defined and is equal to $1$, then $\bra=H^{\le 1}\lbe(S_{\et},\upicxs)$.
\end{itemize}
\end{corollary}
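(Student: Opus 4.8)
The plan is to read both assertions off the two exact sequences \eqref{two} and \eqref{thr3} furnished by Proposition \ref{is}. Sequence \eqref{two} shows that $\npicxs = H^{0}(S_{\et},\upicxs)$ precisely when $\bxs = 0$, and sequence \eqref{thr3} shows that $\bra = H^{1}(S_{\et},\upicxs)$ precisely when $\krn H^{2}(S_{\et},f^{\nat}[1]) = 0$. So I would reduce each part to one of these two vanishing statements and then verify it under each hypothesis.

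For (i), recall from \eqref{relb} that $\bxs = \krn[\brp f\colon \brp S \to \brp X]$. If $\brp S = 0$ this kernel is visibly trivial. If instead $I(\le f\le)$ is defined and equal to $1$, then $\brp f = f^{(2)}$ is injective by the remark following \eqref{rr}, so once more $\bxs = 0$; either way \eqref{two} yields $\npicxs = H^{0}(S_{\et},\upicxs)$.

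For (ii), the shift identifies the domain of $H^{2}(S_{\et},f^{\nat}[1])$ with $H^{2}(S_{\et},\bg_{m,S}[1]) = H^{3}(S_{\et},\bg_{m,S})$, so $\krn H^{2}(S_{\et},f^{\nat}[1])$ is a subgroup of $H^{3}(S_{\et},\bg_{m,S})$; when the latter group vanishes there is nothing more to prove. When instead $I(\le f\le) = 1$, I would derive the injectivity of $H^{2}(S_{\et},f^{\nat}[1])$ from that of the pullback $f^{(3)}$ (again the remark after \eqref{rr}) by establishing the factorization
\[
f^{(3)} = \varphi_{X/S} \circ H^{2}(S_{\et},f^{\nat}[1]),
\]
with $\varphi_{X/S}$ the map \eqref{fi}. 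Granting this, injectivity of $f^{(3)}$ forces injectivity of $H^{2}(S_{\et},f^{\nat}[1])$, hence the desired vanishing and, via \eqref{thr3}, the equality $\bra = H^{1}(S_{\et},\upicxs)$.

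The crux --- and the step I expect to cost the most --- is the factorization displayed above, which I would obtain by unwinding the definitions. By \eqref{cump} one has $f^{\nat} = i_{f}^{1} \circ f^{\le\flat}$, while \eqref{bp0} gives $\varphi_{X/S} = H^{3}(i_{f}^{3}) \circ H^{3}(i_{f}^{2})$ under the identifications \eqref{ident}; after the shift this yields $\varphi_{X/S} \circ H^{2}(S_{\et},f^{\nat}[1]) = H^{3}(i_{f}^{3} \circ i_{f}^{2} \circ i_{f}^{1} \circ f^{\le\flat})$. Now $i_{f}^{3} \circ i_{f}^{2} \circ i_{f}^{1}$ is the canonical map $f_{*}\bg_{m,X} = \tau_{\leq\e 0}\mathbb{R}f_{*}\bg_{m,X} \to \tau_{\leq\e 3}\mathbb{R}f_{*}\bg_{m,X}$, and the main thing to check is that, under \eqref{ident}, the map it induces on $H^{3}(S_{\et},-)$ is exactly the edge morphism $e_{\be f}^{3}\colon H^{3}(S_{\et},f_{*}\bg_{m,X}) \to H^{3}(X_{\et},\bg_{m,X})$ of the Cartan--Leray spectral sequence \eqref{leray}. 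Matching this telescope of truncation maps with the concrete edge morphism is the one genuinely delicate point; granting it, the composite becomes $e_{\be f}^{3} \circ H^{3}(f^{\le\flat}) = f^{(3)}$ by the definition \eqref{rr}, completing the argument.
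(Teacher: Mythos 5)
Your proof is correct and follows essentially the same route as the paper: both reduce the statement via the exact sequences \eqref{two} and \eqref{thr3} to the vanishing of $\bxs$ and of $\krn H^{\le 2}\lbe(S_{\et},f^{\nat}\lbe[1])$, and both handle the $I(\le f\le)=1$ case through the factorization $f^{(3)}=\varphi_{\lbe X\!/\be S}\circ H^{\le 2}\lbe(S_{\et},f^{\nat}\lbe[1])$ together with the injectivity of $f^{(r)}$ recorded after \eqref{rr}. The only difference is that you spell out the verification of that factorization (correctly, via the telescope of truncation maps and the edge morphism $e_{\be f}^{3}$), whereas the paper simply asserts it.
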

\begin{proof} Clearly, $\bxs\subseteq \brp S$. Now, since $H^{\le 2}\lbe(S_{\et},f^{\nat}\lbe[1])=H^{\le 3}\lbe(S_{\et},i_{\be f}^{\le 1}\be\circ\be f^{\le\flat}\e)$, the composite map
\[
H^{\le 3}\lbe(S_{\et},\bg_{m,\le S})\overset{\! H^{2}\lbe(\lle f^{\nat}\lbe[1])}{\lra}H^{\e 2}\lbe(S_{\et},\rdiv)\!\overset{\!\varphi_{\lbe X\!\lbe/\be S}}{\lra}\! H^{\e 3}\lbe(X_{\et},\bg_{m,\le X})
\]
is the pullback $f^{(3)}\colon H^{\le 3}\lbe(S_{\et},\bg_{m,\le S})\to H^{\le 3}\lbe(X_{\et},\bg_{m,\le X})$. Thus
$\krn H^{\le 2}\lbe(S_{\et},f^{\nat}\lbe[1])\subseteq \krn f^{(3)}\subseteq H^{\le 3}\lbe(S_{\et},\bg_{m,\le S})$. Further, if $I(\le f\le)$ is defined, then $\bxs$ and $\krn f^{(3)}$ are annihilated by $I(\le f\le)$ by \cite[Remark 3.1(d)]{ga}. The corollary is now immediate from the proposition.
\end{proof}

\begin{remark} The condition $H^{\le 3}\lbe(S_{\et},\bg_{m,S})=0$ in part (ii) of the corollary holds true if $S$ is the spectrum of a global field $k$ or if $S$ is a non-empty open affine subscheme of the spectrum of the ring of integers of a number field $k$ or of the unique smooth, complete and irreducible curve over a finite field with (global) function field $k$. See \cite[Remark II.2.2(a), p.~165]{adt}. When $S=\spec k$, where $k$ is a number field, and $X$ is a smooth and projective $k$-variety (so that ${\rm UPic}_{\lbe X\be/k}=\pic_{\!\be X\be/k}$ by Remark \ref{ru}(d) below), the isomorphism in assertion (ii) of the corollary is the isomorphism $\braxk= H^{1}(k, \pic \xs\e)$ which appears often in the theory of the Brauer--Manin obstruction (where it is usually derived from the Hochschild-Serre spectral sequence).
\end{remark}

\smallskip

Next, an application of Lemma \eqref{kc-c} to the pair of maps in \ref{cump}, using \eqref{coni}, yields a canonical distinguished triangle in $\dbs$:
\begin{equation}\label{t4}
\ruxs[1]\to\upicxs\to \picxs\to \ruxs[\le 2\le],
\end{equation}
where $\ruxs$ is the complex of relative units \eqref{ruxs}. If $f\colon X\to S$ is {\it schematically dominant}, then $f^{\e\flat}$ is injective by \cite[Lemma 2.4]{ga} and therefore $\ruxs=\uxs$. In this case, \eqref{t4} is a distinguished triangle 
\begin{equation}\label{t5}
\uxs[1]\to\upicxs\to \picxs\to \uxs[\le 2\le].
\end{equation}
Consequently
\begin{equation}\label{up-1}
H^{-1}\lbe(\e\upicxs)=\uxs
\end{equation}
and
\begin{equation}\label{up-0}
H^{\le 0}\lbe(\e\upicxs)=\picxs.
\end{equation}

The following statement is a broad generalization of \cite[Lemma 6.3]{san}.

\begin{proposition}\label{upic-sd}  Let $f\colon X\to S$ be a schematically dominant morphism of schemes.
\begin{enumerate}
\item[(i)] $H^{\le r}(S_{\et},\upicxs)=0$ for all $r\leq -2$.
\item[(ii)] $ H^{-1}(S_{\et},\upicxs)=\uxss$ \eqref{uxss}.
\item[(iii)] If $r\geq 0$, there exists a canonical exact sequence of abelian groups
\[
\dots\to  H^{\le r+1}\lbe(S_{\et},\uxs)\to  H^{\le r}\be(S_{\et},\upicxs)\to H^{\le r}\be(S_{\et},\picxs)\to H^{\le r+2}\lbe(S_{\et},\uxs)\to\dots.
\]
\item[(iv)] If $I(\le f\le)$ is defined and is equal to $1$, then
there exists a canonical exact sequence of abelian groups
\[
\begin{array}{rcl}
0&\to & H^{1}(S_{\et},\uxs)\to\npicxs\to\picxs(S\e)\to
H^{2}(S_{\et},\uxs)\\\\
&\to& \bra\to H^{1}(S_{\et},\picxs)\to H^{3}(S_{\et},\uxs).
\end{array}
\]
\item[(v)] If $H^{\le 3}\lbe(S_{\et},\bg_{m,S})=0$, then
there exists a canonical exact sequence of abelian groups
\[
\picxs(S\e)\to H^{2}(S_{\et},\uxs)\to \bra\to H^{1}(S_{\et},\picxs)\to H^{3}(S_{\et},\uxs).
\]
\end{enumerate}
\end{proposition}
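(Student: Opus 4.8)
The plan is to deduce all five assertions from the single long exact hypercohomology sequence attached to the distinguished triangle \eqref{t5}
\[
\uxs[1]\to\upicxs\to \picxs\to \uxs[2],
\]
which is available precisely because $f$ is schematically dominant, so that $\ruxs=\uxs$ and \eqref{t4} simplifies to \eqref{t5}. Since both $\uxs$ and $\picxs$ are abelian sheaves concentrated in degree $0$, one has $H^{\e r}(S_{\et},\uxs[n])=H^{\e r+n}(S_{\et},\uxs)$, while $\picxs$ contributes its ordinary sheaf cohomology $H^{\e r}(S_{\et},\picxs)$; both groups vanish as soon as the relevant (shifted) degree is negative. Applying $H^{\bullet}(S_{\et},-)$ to \eqref{t5} and inserting these shifts yields at once the canonical exact sequence
\[
\cdots\to H^{\e r+1}(S_{\et},\uxs)\to H^{\e r}(S_{\et},\upicxs)\to H^{\e r}(S_{\et},\picxs)\to H^{\e r+2}(S_{\et},\uxs)\to\cdots,
\]
whose tail in degrees $r\geq 0$ is assertion (iii).

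From this master sequence parts (i) and (ii) are immediate. For (i), when $r\leq -2$ both flanking terms $H^{\e r+1}(S_{\et},\uxs)$ and $H^{\e r}(S_{\et},\picxs)$ vanish (their degrees being $\leq -1$ and $\leq -2$ respectively), forcing $H^{\e r}(S_{\et},\upicxs)=0$. For (ii), the relevant four-term segment is
\[
H^{-2}(S_{\et},\picxs)\to H^{\e 0}(S_{\et},\uxs)\to H^{-1}(S_{\et},\upicxs)\to H^{-1}(S_{\et},\picxs),
\]
whose outer terms vanish, giving $H^{-1}(S_{\et},\upicxs)=H^{\e 0}(S_{\et},\uxs)=\uxss$.

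For (iv) I would use the hypothesis $I(\le f\le)=1$ to invoke both parts of Corollary \ref{nco}, identifying $H^{\e 0}(S_{\et},\upicxs)=\npicxs$ and $H^{\e 1}(S_{\et},\upicxs)=\bra$. Substituting these, together with $H^{\e 0}(S_{\et},\picxs)=\picxs(S)$, into the stretch of the master sequence from $r=0$ to $r=1$ produces the desired six-term sequence; the initial $0$, i.e. the injectivity of $H^{\e 1}(S_{\et},\uxs)\to\npicxs$, follows because the connecting map into $H^{\e 1}(S_{\et},\uxs)$ emanates from $H^{-1}(S_{\et},\picxs)=0$. For (v) the weaker hypothesis $H^{\e 3}(S_{\et},\bg_{m,S})=0$ supplies, through Corollary \ref{nco}(ii) only, the identification $H^{\e 1}(S_{\et},\upicxs)=\bra$; I would therefore extract the shorter fragment of the master sequence running from $H^{\e 0}(S_{\et},\picxs)=\picxs(S)$ to $H^{\e 3}(S_{\et},\uxs)$ and substitute $\bra$ for $H^{\e 1}(S_{\et},\upicxs)$.

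There is no genuinely hard step here: the argument is essentially the extraction of various pieces of one long exact sequence. The only points demanding care are the systematic tracking of the shifts $[1]$ and $[2]$ in \eqref{t5} upon passing to cohomology, and the precise matching of hypotheses to Corollary \ref{nco} — in particular recognizing that (iv) needs both of its parts, and hence the stronger assumption $I(\le f\le)=1$, whereas (v) uses only part (ii), which is exactly why the term $\npicxs$ and the leftmost injectivity are absent from the statement of (v).
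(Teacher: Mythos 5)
Your argument is correct and coincides with the paper's own proof: assertions (i)--(iii) are read off from the $S_{\et}$-hypercohomology sequence of the triangle \eqref{t5}, and (iv)--(v) follow by substituting the identifications of Corollary \ref{nco} into the $r=0,1$ portion of that sequence, with exactly the hypothesis-matching you describe. No gaps.
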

\begin{proof} Assertions (i)-(iii) follow from the $S_{\et}$-cohomology sequence induced by the triangle \eqref{t5}. Assertions (iv) and (v) follow from (iii) using the 
identifications $\npicxs= H^{\le 0}\lbe(S_{\et},\upicxs)$ and $\bra=H^{\le 1}\lbe(S_{\et},\upicxs)$ of Corollary \ref{nco}. 
\end{proof}

\begin{remarks}\label{ru}\indent
\begin{enumerate}
\item[(a)] If $f$ is schematically dominant and $\uxs=0$, then $\upicxs=\picxs$ by \eqref{t5}. 
\item[(b)] Let $f\colon X\to S$ be a quasi-compact and quasi-separated morphism of schemes such that the canonical map $\s O_{\be S}\to f_{\lbe *}\s O_{\be X}$ is an isomorphism of Zariski sheaves on $S$. If $T\to S$ is any flat morphism of schemes, then $f_{T}\colon X_{T}\to T$ is quasi-compact and quasi-separated and 
$\s O_{\lbe T}\to (f_{T})_{*}\s O_{\be X_{T}}$ is an isomorphism of 
Zariski sheaves on $T$ \cite[${\rm III}_{1}$, Proposition 1.4.15, and ${\rm IV}_{1}$, (1.7.21)]{ega}. It follows that $f_{T}^{\le\flat}\colon \bg_{m,\le T}\to (f_{T}\be)_{*}\bg_{m,\e X_{T}}$ is an isomorphism of \'etale sheaves on $T$. Thus ${\rm RU}_{\be X_{\lbe T}\lbe/\le T}$ \eqref{ruxs} is the zero object of $D^{b}(T_{\et})$. In particular, $U_{\be X_{T}\lbe/\le T}=0$.  
		
\item[(c)] Let $f\colon X\to S$ be a proper and dominant morphism of locally noetherian integral schemes with geometrically integral generic fiber. Assume, in addition, that $S$ is normal (in particular $S$ is reduced, whence $f$ is schematically dominant). By \cite[${\rm IV}_{2}$, Corollary 4.6.3]{ega}, the function field of $S$ is algebraically closed in the function field of $X$. Thus, by \cite[${\rm III}_{1}$, Corollary 4.3.12]{ega}, the canonical map of Zariski sheaves $\s O_{\be S}\to f_{\be *}\le\s O_{\be X}$ is an isomorphism. Since $f$ is quasi-compact and quasi-separated, we conclude from (b) that $U_{\be X_{\lbe T}\lbe/\le T}=0$ for every flat morphism $T\to S$. Now (a) yields the equality $\upicxs=\picxs$.

\item[(d)] Let $X$ be a smooth and projective $k$-variety. Then the structural morphism $f\colon X\to \spec k$ satisfies the conditions stated in (c), whence $U_{\be X\be/k}=0$ and therefore ${\rm UPic}_{\lbe X\be/k}=\pic_{\!\be X\be/k}$. See Remark \ref{comm} below.
\end{enumerate}
\end{remarks}

\begin{lemma}\label{split1} Let $f\colon X\to S$ be a morphism of schemes. If $f\colon\be X\to S$ has a section, then the triangle \eqref{t3} splits, i.e., is isomorphic to the triangle
\[
\bg_{m,S}[1]\to \bg_{m,S}[1]\oplus\upicxs\to\upicxs\overset{\be 0}\to
\bg_{m,S}[\le 2\le]
\]
with canonical unlabeled maps. Consequently,
\[
H^{\le r}\be(S_{\et},\rdiv)\simeq H^{\le r+1}\be(S_{\et},\bg_{m,\le S})\be\oplus \! H^{\lle r}\lbe(S_{\et},\upicxs)
\]
for every $r\geq -1$.
\end{lemma}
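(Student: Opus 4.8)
The plan is to deduce everything from the observation that a section equips the first morphism of \eqref{t3} with a retraction. Concretely, if we can produce a morphism $r\colon\rdiv\to\bg_{m,S}[1]$ in $\dbs$ with $r\circ(f^{\nat}[\le 1])=1_{\bg_{m,S}[1]}$, then the splitting is a formal consequence of the standard fact that a distinguished triangle whose first arrow admits a retraction is isomorphic to the direct-sum triangle, with vanishing connecting morphism and with the inclusion and projection as the (canonical) unlabeled maps. Thus the entire problem reduces to constructing such an $r$ out of a section $\sigma\colon S\to X$ of $f$.

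To build $r$ I would pass to the derived pushforward. The section $\sigma$ furnishes the unit $\bg_{m,X}\to\mathbb R\sigma_{*}\bg_{m,S}$ of the adjunction $(\sigma^{*},\mathbb R\sigma_{*})$; applying $\mathbb Rf_{*}$ and using $\mathbb Rf_{*}\mathbb R\sigma_{*}=\mathbb R(\le f\be\circ\be\sigma)_{*}=\mathrm{id}$ (because $f\be\circ\be\sigma=1_{S}$) yields a morphism $\rho\colon\mathbb Rf_{*}\bg_{m,X}\to\bg_{m,S}$. By the $2$-functoriality of $\mathbb R(-)_{*}$, the composite of $\rho$ with the unit $\eta_{f}\colon\bg_{m,S}\to\mathbb Rf_{*}\bg_{m,X}$ of $(\,f^{*},\mathbb Rf_{*})$ is the unit attached to $f\be\circ\be\sigma=1_{S}$, which is $1_{\bg_{m,S}}$; so $\rho\circ\eta_{f}=1_{\bg_{m,S}}$ in $\dbs$, and on $H^{0}$ this is exactly \eqref{marv2}. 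I would then define $r$ as the shift of the composite $\tau_{\leq 1}\mathbb Rf_{*}\bg_{m,X}\to\mathbb Rf_{*}\bg_{m,X}\overset{\rho}{\to}\bg_{m,S}$, the first arrow being the canonical truncation morphism. Since $\bg_{m,S}$ is concentrated in degree $0$, the unit $\eta_{f}$ factors through $\tau_{\leq 0}\mathbb Rf_{*}\bg_{m,X}=f_{*}\bg_{m,X}$ via $f^{\flat}$; combining this with the definition \eqref{cump} of $f^{\nat}$ gives $r\circ f^{\nat}=\rho\circ\eta_{f}=1_{\bg_{m,S}}$, so after shifting $r$ retracts $f^{\nat}[\le 1]$, as required.

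The step I expect to need the most care is precisely the passage from \eqref{marv2}, which lives on the ordinary sheaf $f_{*}\bg_{m,X}$, to the truncated complex $\tau_{\leq 1}\mathbb Rf_{*}\bg_{m,X}$: one must check that $\rho\circ\eta_{f}=1$ really holds in $\dbs$ (this is the $2$-functoriality of $\mathbb R(-)_{*}$ applied to $f\be\circ\be\sigma=1_S$) and that restricting $\rho$ along the truncation morphism still retracts $f^{\nat}$, for which the degree-$0$ concentration of $\bg_{m,S}$, forcing $\eta_{f}$ to factor through $\tau_{\leq 0}\mathbb Rf_{*}\bg_{m,X}$, is the essential point. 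Granting the splitting, the cohomological consequence is immediate: applying the functors $H^{r}(S_{\et},-)$ to the split form $\bg_{m,S}[1]\to\bg_{m,S}[1]\oplus\upicxs\to\upicxs\overset{0}{\to}\bg_{m,S}[\le 2\le]$ of \eqref{t3}, the associated long exact sequence has vanishing connecting homomorphisms and therefore collapses to the canonical isomorphisms $H^{r}(S_{\et},\rdiv)\simeq H^{r+1}(S_{\et},\bg_{m,S})\oplus H^{r}(S_{\et},\upicxs)$ for every $r\geq -1$.
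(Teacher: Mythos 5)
Your argument is correct and matches the paper's proof in all essentials: the paper likewise uses the section $\sigma$ to produce a morphism $\mathbb R f_{*}\bg_{m,X}\to\bg_{m,S}$ (via $\mathbb R f_{*}(\sigma^{\flat})$ and $\mathbb R f_{*}\mathbb R\sigma_{*}\simeq\mathbb R(f\circ\sigma)_{*}=\mathrm{id}$), restricts it along the truncation to get a retraction of $f^{\nat}[1]$ using \eqref{marv2}, and concludes the splitting from the standard triangulated-category fact (citing Neeman). Your phrasing via adjunction units is just a repackaging of the same construction, and the cohomological consequence is drawn identically.
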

\begin{proof} Let $\sigma\colon S\to X$ be a section of $f$. 
The composition of canonical morphisms in $D(S_{\et})$
\[
\mathbb R\le f_{\lbe *}\bg_{m,\le X}\overset{\mathbb R\le f_{ *}\be(\sigma^{\flat}\lbe)}{\lra}\mathbb R\le f_{\lbe *}\lbe(\le \sigma_{\lbe *}\bg_{m,\le S})\to \mathbb R\le f_{\lbe *}(\le \mathbb R\e \sigma_{\lbe *}\bg_{m,\le S})\simeq \mathbb R(\le f_{\le *}\!\circ\be \sigma_{\lbe *})\bg_{m,S}=\bg_{m,S}
\]
induces a morphism $\alpha\colon\tau_{\leq\e 1}\mathbb R\le f_{\lbe *}\bg_{m,\le X}\to\bg_{m,\le S}$ in $\dbs$ such that the composition
\[
f_{*}\bg_{m,X}\overset{\!i_{\be f}^{1}}{\lra}\tau_{\leq\e 1}\mathbb R\le f_{\lbe *}\bg_{m,\le X}\overset{\!\alpha}{\lra}\bg_{m,\le S}
\]
equals $f_{\lbe *}(\sigma^{\le\flat})$. Thus, since $f^{\nat}$ is the composition of the maps in \eqref{cump}, the equation \eqref{marv2} shows that the map $\alpha[\le 1\le]\colon\rdiv\to\bg_{m,\le S}[1]$
is a retraction of the map $f^{\nat}\lbe[1]\colon \bg_{m,\le S}[1]\to\rdiv$ appearing in \eqref{t3}. The lemma now follows from \cite[Remark 1.2.9, p.~44]{nee}.
\end{proof}

\begin{proposition}\label{tor} Let $f\colon X\to S$ be a morphism of schemes such that the \'etale index $I(\le f\le)$ is defined and is equal to $1$. Then there exists a canonical exact sequence of abelian groups
\[
0\to\brt\to H^{\le 2}\lbe(S_{\et},\upicxs\le)\to H^{\le 3}\lbe(X_{\et},\bg_{m,\le S}\lbe)/H^{\le 3}\lbe(S_{\et},\bg_{m,\le S}\lbe),
\]
where $\brt$ is the group \eqref{br2}.
\end{proposition}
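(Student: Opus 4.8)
The plan is to read the sequence off the long exact $S_{\et}$-cohomology sequence of the triangle \eqref{t3}, exploiting that the hypothesis $I(\le f\le)=1$ makes every pullback map $f^{(r)}$ of \eqref{rr} injective \cite[Remark 3.1(d)]{ga}. Write $\delta=H^{2}(S_{\et},f^{\nat}[1])\colon H^{3}(S_{\et},\bg_{m,S})\to H^{2}(S_{\et},\rdiv)$ and let $\varphi\colon H^{2}(S_{\et},\rdiv)\to H^{3}(X_{\et},\bg_{m,X})$ be the map \eqref{fi}. I would begin by recording the two inputs already in hand: $\varphi\circ\delta=f^{(3)}$ (established in the proof of Corollary \ref{nco}) and $\krn\varphi=\brt$ (Lemma \ref{coh-rdiv}(v)).

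Next I would extract from \eqref{t3} two consequences of the injectivity of the $f^{(r)}$. Since $\varphi\circ\delta=f^{(3)}$ is injective, $\delta$ is injective. For the second, the relevant stretch of the cohomology sequence of \eqref{t3} is
\[
H^{2}(S_{\et},\rdiv)\xrightarrow{\rho}H^{2}(S_{\et},\upicxs)\xrightarrow{\partial}H^{4}(S_{\et},\bg_{m,S})\xrightarrow{H^{4}(S_{\et},f^{\nat})}H^{3}(S_{\et},\rdiv),
\]
so $\img\partial=\krn H^{4}(S_{\et},f^{\nat})$. Because $f^{\nat}=i_{f}^{1}\circ f^{\flat}$ and the edge morphism $e_{f}^{4}$ is induced by the canonical map $f_{*}\bg_{m,X}\to\mathbb Rf_{*}\bg_{m,X}$, the pullback $f^{(4)}=e_{f}^{4}\circ H^{4}(f^{\flat})$ factors as $H^{4}(S_{\et},f^{\nat})$ followed by the map on $H^{4}$ induced by the truncation $\tau_{\leq 1}\mathbb Rf_{*}\bg_{m,X}\to\mathbb Rf_{*}\bg_{m,X}$; hence injectivity of $f^{(4)}$ forces $H^{4}(S_{\et},f^{\nat})$ to be injective, giving $\partial=0$. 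Thus $\rho$ is surjective and, as $\krn\rho=\img\delta$, induces an isomorphism $\cok\delta\isoto H^{2}(S_{\et},\upicxs)$.

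The core step is then to apply the snake lemma (Lemma \ref{ker-cok}) to the composable pair $\delta,\varphi$ with composite $\varphi\circ\delta=f^{(3)}$. As $\krn\delta=\krn f^{(3)}=0$ and $\krn\varphi=\brt$, this produces a canonical exact sequence
\[
0\to\brt\to\cok\delta\xrightarrow{\ \overline{\varphi}\ }\cok f^{(3)}\to\cok\varphi\to 0,
\]
with $\overline{\varphi}$ induced by $\varphi$. Truncating after $\cok f^{(3)}$ and substituting the isomorphism $\cok\delta\isoto H^{2}(S_{\et},\upicxs)$ of the previous step yields the asserted
\[
0\to\brt\to H^{2}(S_{\et},\upicxs)\to\cok f^{(3)},
\]
where $\cok f^{(3)}=H^{3}(X_{\et},\bg_{m,X})/H^{3}(S_{\et},\bg_{m,S})$ (with $\bg_{m,X}=f^{*}\bg_{m,S}$), the quotient being legitimate since $f^{(3)}$ is injective. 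I would finally check that the resulting injection $\brt\hookrightarrow H^{2}(S_{\et},\upicxs)$ is simply $\rho|_{\brt}$, which is immediate because the snake connecting map $\krn\varphi\to\cok\delta$ is the inclusion $\krn\varphi\subseteq H^{2}(S_{\et},\rdiv)$ followed by the projection onto $\cok\delta$.

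The main obstacle I anticipate is the surjectivity of $\rho$ (equivalently $\partial=0$); once the factorization of $f^{(4)}$ through $H^{4}(S_{\et},f^{\nat})$ is verified, the rest is formal, resting only on $\varphi\circ\delta=f^{(3)}$ and $\krn\varphi=\brt$. A more geometric alternative that sidesteps the snake lemma is to form the cone $W=C^{\bullet}(\bg_{m,S}\to\mathbb Rf_{*}\bg_{m,X})$ of the total pullback and apply Lemma \ref{kc-c} to $\bg_{m,S}[1]\xrightarrow{f^{\nat}[1]}\rdiv\to\mathbb Rf_{*}\bg_{m,X}[1]$; injectivity of $f^{(3)}$ and $f^{(4)}$ then identifies $H^{3}(S_{\et},W)$ with $\cok f^{(3)}$ and supplies the map $H^{2}(S_{\et},\upicxs)\to H^{3}(S_{\et},W)$ directly, but at the cost of constructing $W$ and computing $C^{\bullet}$ of the truncation map.
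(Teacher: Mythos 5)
Your proof is correct, and its skeleton coincides with the paper's: both factor $f^{(3)}$ as $\varphi_{\lbe X\!/\be S}\circ H^{\le 2}(S_{\et},f^{\nat}[1])$, apply Lemma \ref{ker-cok} to that pair using $\krn\varphi_{\lbe X\!/\be S}=\brt$ from Lemma \ref{coh-rdiv}(v) and $\krn f^{(3)}=0$, and then identify $\cok H^{\le 2}(S_{\et},f^{\nat}[1])$ with $H^{\le 2}(S_{\et},\upicxs)$. The one place you genuinely diverge is the proof that the connecting map $\partial_{\lbe X\!/\lbe S}\colon H^{\le 2}(S_{\et},\upicxs)\to H^{\le 4}(S_{\et},\bg_{m,S})$ vanishes. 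The paper argues via restriction--corestriction: it invokes Lemma \ref{split1} to see that $\partial_{X_T/T}=0$ after pulling back along a finite \'etale quasi-section $T\to S$ of degree $d$, and then uses the $\tr\circ\res=d$ identity of Remark \ref{res-tr} to conclude that $\img\partial_{\lbe X\!/\lbe S}$ is killed by $I(\le f\le)=1$. You instead observe that $\img\partial_{\lbe X\!/\lbe S}=\krn H^{\le 4}(S_{\et},f^{\nat})$ and that $f^{(4)}$ factors through $H^{\le 4}(S_{\et},f^{\nat})$ via the truncation map $\tau_{\leq 1}\mathbb Rf_{\lbe*}\bg_{m,X}\to\mathbb Rf_{\lbe*}\bg_{m,X}$, so injectivity of $f^{(4)}$ (already recorded in the Preliminaries as a consequence of $I(\le f\le)=1$, citing \cite[Remark 3.1(d)]{ga}) forces $\partial_{\lbe X\!/\lbe S}=0$. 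Your factorization of $f^{(4)}$ is valid, since the edge morphism $e_{\be f}^{\e 4}$ is induced by $\tau_{\leq 0}\mathbb Rf_{\lbe*}\bg_{m,X}\to\mathbb Rf_{\lbe*}\bg_{m,X}$, which passes through $\tau_{\leq 1}$. What you gain is economy: you bypass Lemma \ref{split1} and the explicit $\res$/$\tr$ diagram entirely. What the paper's version makes visible is the mechanism itself (annihilation by the degree of a quasi-section), which is reused elsewhere; your route outsources that mechanism to the cited injectivity of $f^{(r)}$, so both arguments ultimately rest on the same restriction--corestriction principle.
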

\begin{proof} Since $f^{(3)}\colon H^{\le 3}\lbe(S_{\et},\bg_{m,\le S})\to H^{\le 3}\lbe(X_{\et},\bg_{m,\le X})$ factors as
\[
H^{\le 3}\lbe(S_{\et},\bg_{m,\le S})\overset{\! H^{2}\lbe(\lle f^{\nat}\lbe[1])}{\lra}H^{\e 2}\lbe(S_{\et},\rdiv)\!\overset{\!\varphi_{\lbe X\!\lbe/\be S}}{\lra}\! H^{\e 3}\lbe(X_{\et},\bg_{m,\le X}),
\]
where $\varphi_{\be X\!/\be S}$ is the map \eqref{fi} (see the proof of Corollary \ref{nco}), an application of Lemma~\ref{ker-cok} to the above pair of maps using Lemma \ref{coh-rdiv}(v) yields
an exact sequence 
\[
0\to \krn H^{\le 2}\lbe(S_{\et},f^{\nat}\lbe[1])\to \krn f^{(3)}\to \brt\to \cok \be H^{\le 2}\be(S_{\et},f^{\nat}\lbe[1])\to\cok f^{(3)}.
\]
The hypothesis $I(\le f\le)=1$ yields $\krn f^{(3)}=0$ (see \cite[Remark 3.1(d)]{ga}). Further, we may write 
$\cok f^{(3)}=H^{\le 3}\lbe(X_{\et},\bg_{m,\le S}\lbe)/H^{\le 3}\lbe(S_{\et},\bg_{m,\le S}\lbe)$. 
We will complete the proof by showing that there exists a canonical isomorphism of abelian groups $\cok \be H^{\le 2}\lbe(S_{\et},f^{\nat}\lbe[1])=H^{\le 2}\lbe(S_{\et},\upicxs\le)$. The triangle \eqref{t3} induces an isomorphism
\[
\cok \be H^{\le 2}\be(S_{\et},f^{\nat}\lbe[1])\simeq\krn\!\!\left[H^{\le 2}\lbe(S_{\et},\upicxs\le)\overset{\!\partial_{\lbe X\!/\lbe S}}{\lra} H^{\le 4}\lbe(S_{\et},\bg_{m,\le S})\right]
\]
where, by Lemma \ref{split1}, the connecting map $\partial_{\lbe X\!/\lbe S}$ is zero if $f$ has a section. 
Let $T\to S$ be a finite \'etale quasi-section of $f$ of constant degree $d$ and consider the commutative diagram with exact rows
\[
\xymatrix{H^{\le 2}\lbe(S_{\et},\upicxs\le)\ar[d]^{\res}\ar[r]& \img\partial_{X/S}\,\ar[d]^{\res}\ar[r]& 0\\
H^{\le 2}\lbe(T_{\et},{\rm UPic}_{X_{T}\be/\le T}\lbe)\ar[d]^{\tr}\ar[r]^(.6){0}& \img\partial_{X_{\lbe T}\be/\le T}\,\ar[d]^{\tr}\ar[r]& 0\\
H^{\le 2}\lbe(S_{\et},\upicxs\le)\ar[r]& \img\partial_{X/S}\ar[r]& 0,
}
\]
where the maps on the right-hand column are induced by those on the left-hand column. The left-hand (and therefore also the right-hand) vertical composition  is the multiplication by $d$ map (see Remark \eqref{res-tr}). It follows that $\img\partial_{X/S}$ is annihilated by $I\lbe(\lle f\lle)$, which completes the proof.
\end{proof}

\begin{remark}\label{comm} The proposition generalizes \cite[Proposition 1.3, p.~147]{ctsk}. Note that, in constrast to [loc.cit.], the proof of the above proposition makes no appeal to the Hochschild-Serre spectral sequence and yields more information than the indicated reference in the particular case considered there (for example, the hypothesis $U_{\lbe X\be/k}=0$ in [loc.cit.] can be avoided if $\pic_{\!\be X\be/k}$ is replaced with ${\rm UPic}_{\lbe X\be/k}$. See Remark \ref{ru}(d) above). 	
\end{remark}

\section{Proof of the main theorem}

In this Section we establish the additivity theorem for $\upicxs$ mentioned in the Introduction (see Proposition \ref{upadd}) and apply it to derive our main Theorem \ref{main}.

\smallskip

Let $f\colon X\to S$ and $g\colon Y\to S$ be morphisms of schemes and recall the morphisms $p_{X}\colon X\times_{S}Y\to X$ and $p_{\le Y}\colon X\be\times_{S}\be Y\to Y$. Since $f\be\circ\be p_{\lbe X}=g\lbe\circ\lbe p_{\le Y}=f\be\times_{\be S}\be g$, the canonical maps $p_{\be X}^{\le\flat}\colon \bg_{m,\le X}\to (\e p_{\lbe X}\be)_{*}\bg_{m,\le X\lbe\times_{\be S} Y}$ and $p_{\le Y}^{\le\flat}\colon \bg_{m,\le Y}\to (\e p_{\le Y}\be)_{*}\bg_{m,\le X\lbe\times_{\be S} Y}$ induce maps $f_{\be *}\lbe(\e p_{\lbe X}^{\le\flat})\colon f_{*}\bg_{m,\le X}\be\to\be (\e f\be\times_{\be S}\lbe g\lbe)_{*}\bg_{m,\le X\lbe\times_{\be S}\lbe Y}$ and
$g_{\lbe *}\be(\e p_{\le Y}^{\le\flat})\colon g_{*}\bg_{m,\le Y}\be\to\be (\e f\be\times_{\be S}\lbe g\be)_{*}\bg_{m,\le X\lbe\times_{\be S}\lbe Y}$. Now the map
\[
\vartheta\colon f_{*}\bg_{m,\le X}\!\oplus\be g_{*}\bg_{m,\le Y}\to (\e f\be\times_{\be S}\be g\lbe)_{*}\bg_{m,\le X\lbe\times_{\be S}\lbe Y}
\]
which, on sections, is given by $(a,b)\mapsto f_{\be *}\lbe(\e p_{\lbe X}^{\le\flat})(a)\be\cdot\be g_{*}\be(\e p_{\le Y}^{\le\flat})(b)$, induces a morphism in $D(\lbe S_{\et}\be)$
\begin{equation}\label{addr2}
\mathbb R\le\vartheta\colon \mathbb R\lbe f_{*}\bg_{m,\le X}\be\oplus\be \mathbb R g_{*}\bg_{m,\le Y}\to \mathbb R (\le f\!\times_{\be S}\be g\le)_{*}\bg_{m,\le X\lbe\times_{\be S}\lbe Y}.
\end{equation}
By \eqref{marv}, the following diagram commutes
\[
\xymatrix{\bg_{m,\le S}\be\oplus\be \bg_{m,\le S}\ar[d]^(.45){(\cdot)} \ar[r]^(.44){(\le f^{\flat}\!,\e g^{\flat})}& f_{\lbe *}\bg_{m,\le X}\!\oplus\! g_{*}\bg_{m,\le Y}\ar[d]^(.44){\vartheta}\ar[r]^(.39){(\le i_{\be f}^{1},\e i_{\be g}^{1})}& \tau_{\leq\e  1}\mathbb R\lbe f_{*}\bg_{m,\le X}\be\oplus\be \tau_{\leq\e  1}\mathbb R g_{*}\bg_{m,\le Y}\ar[d]^(.44){\tau_{\leq\e  1}\lbe\mathbb R\le\vartheta}\\
\bg_{m,\le S}\ar[r]^(.35){(\le f\!\times_{\be S} g\e)^{\flat}}&(\e f\be\times_{\be S}\be g\lbe)_{*}\bg_{m,\le X\!\times_{\be S}\lbe Y}\ar[r]^(.43){i_{\be f\times g}^{1}}& \tau_{\leq\e  1}\mathbb R (\le f\!\times_{\be S}\be g\le)_{*}\bg_{m,\le X\be\times_{\be S}\lbe Y},
}
\]
where the left-hand vertical map is the multiplication morphism. Consequently, the following diagram also commutes
\[
\xymatrix{\bg_{m,\le S}\be\oplus\be \bg_{m,\le S}\ar[d]^(.45){(\cdot)} \ar[rr]^(.35){(\le f^{\nat}\!,\e g^{\nat})}&&\tau_{\leq\e  1}\mathbb R\lbe f_{*}\bg_{m,\le X}\be\oplus\be \tau_{\leq\e  1}\mathbb R g_{*}\bg_{m,\le Y}\ar[d]^(.43){\tau_{\leq\e  1}\lbe\mathbb R\le\vartheta}\\
\bg_{m,\le S}\ar[rr]^(.35){(\le f\!\times_{\be S} g\e)^{\nat}}&& \tau_{\leq\e  1}\mathbb R (\le f\!\times_{\be S}\be g\le)_{*}\bg_{m,\le X\be\times_{\be S}\lbe Y},
}
\]
where the morphisms $f^{\nat}, g^{\nat}$ and $(\le f\!\times_{\be S} g\e)^{\nat}$ are given by \eqref{fnat}. Thus, by definition \eqref{upic}, the vertical maps in the preceding diagram define a canonical morphism in $\dbs$:
\begin{equation}\label{upicm}
\upicxs\be\oplus\be\upicys\to\upicxys.
\end{equation}

We now observe that \eqref{addr2} induces a morphism in $S_{\et}^{\e\sim}$ (namely, $H^{\le r}\be(\mathbb R\le\vartheta)$):
\begin{equation}\label{rrs}
R^{\e r}\! f_{*}\bg_{m,\le X}\be\oplus\be R^{\e r}\be g_{*}\bg_{m,\le Y}\to R^{\e r}\be (\le f\!\times_{\be S}\be g\le)_{*}\bg_{m,\le X\lbe\times_{\be S}\lbe Y}
\end{equation}
for every integer $r\geq 0$. In particular, there exists an induced canonical homomorphism of abelian groups
\begin{equation}\label{ran}
\brxs(\lbe S\e)\be\oplus\be \brys(\lbe S\e)\to\brxys(\lbe S\e)
\end{equation}
which induces, in turn, a homomorphism of abelian groups
\begin{equation}\label{ran2}
\brt\be\oplus\be \brty\to\brtxy,
\end{equation}
where $\brt$ is the group defined in \eqref{br2}.

Next, we recall from \cite{ga} the homomorphisms of abelian groups
\begin{equation}\label{del2}
\nabla^{2}\colon \brp S\to \brp S\oplus \brp S,\e \xi\mapsto(\xi,\xi^{-1}\le),
\end{equation}
\begin{equation}\label{beta2}
\beta^{\le 2}=\left(f^{(\lbe 2\lbe)}\!,\e g^{(\lbe 2\lbe)}\right)\be\circ\be \nabla^{2}\colon 
\brp S\to \brp X\lbe\oplus\lbe\brp\, Y, \e \xi\mapsto (f^{(2)}(\xi),g^{(2)}(\xi)^{-1}\le),
\end{equation}
and
\begin{equation}\label{pxy2}
p_{\lbe X\lbe Y}^{\le 2}\colon \brp X\lbe\oplus\lbe\brp\, Y\to \brp(X\!\be\times_{\be S}\!\lbe Y\le),\e (a,b)\mapsto p_{\be X}^{(2)}\be(a)\cdot p_{\le Y}^{(2)}\be(b\e).
\end{equation}

\begin{remark}
If the canonical map $\br S\to\brp\le S$ is an isomorphism (see Remark \ref{bgps}), the composite map
\[
\br S\to\brp S\overset{\beta^{2}}{\to}\brp X\lbe\oplus\lbe\brp\, Y
\]
will also be denoted by $\beta^{\le 2}$. Similarly, if the canonical map  $\br Z\to\brp Z$ is an isomorphism for $Z=X,Y$ and $X\be\times_{\be S}\be Y$, the map $\br X\lbe\oplus\lbe\br\, Y\to \br\lbe(X\!\be\times_{\be S}\!\lbe Y\le)$ induced by \eqref{pxy2} will also be denoted by 
$p_{\lbe X\lbe Y}^{\le 2}$.
\end{remark}

The map \eqref{pxy2} induces homomorphisms of abelian groups
\begin{equation}\label{br1a}
\nbrxs\be\oplus\be \nbrys\to\nbrxys,
\end{equation}
\begin{equation}\label{nad}
p_{\lbe X\lbe Y\be,\, 1}^{\le 2}\colon\bro\be\oplus\be \broys\to\broxys
\end{equation}
and
\begin{equation}\label{brad}
\bra\be\oplus\be\bray\to\braxy,
\end{equation}
where the groups $\nbrxs$, $\bro$ and $\bra$ are given by \eqref{nbr}, \eqref{br1} and \eqref{bra}, respectively.

\begin{lemma}\label{b2} Let $f\colon X\to S$ and $g\colon Y\to S$ be morphisms of schemes such that the \'etale index $I\lbe(\lle f\!\times_{\be S}\be g\le)$ is defined and is equal to $1$. Then there exists a canonical exact sequence of abelian groups
\[
0\to \brp S\to\cok\beta^{\le 2}\to\nbrxs\be\oplus\be\nbrys\to 0.
\]
\end{lemma}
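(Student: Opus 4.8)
The plan is to reduce the assertion to a routine computation with cokernels in $\mathbf{Ab}$, the one nontrivial ingredient being the injectivity of the two pullback maps. First I would record that the hypothesis forces $\brp f$ and $\brp g$ to be injective: since $I(\le f\!\times_{\be S}\be g\le)$ is defined and equal to $1$, the discussion preceding \eqref{genf} shows that $I(\le f\le)$ and $I(\le g\le)$ are defined and divide $I(\le f\!\times_{\be S}\be g\le)=1$, hence equal $1$, and therefore $f^{(2)}=\brp f$ and $g^{(2)}=\brp g$ are injective by \cite[Remark 3.1(d)]{ga}. Writing $\phi=\brp f\colon\brp S\into\brp X$ and $\psi=\brp g\colon\brp S\into\brp Y$, I recall from \eqref{beta2} that $\beta^{\le 2}(\xi)=(\phi(\xi),\psi(\xi)^{-1})$, i.e.\ $\beta^{\le 2}$ is the anti-diagonal embedding determined by $\phi$ and $\psi$.

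Next I would build the right-hand surjection. The product of the canonical projections $\brp X\onto\nbrxs$ and $\brp Y\onto\nbrys$ is a surjection $\brp X\oplus\brp Y\onto\nbrxs\oplus\nbrys$ with kernel $\img\phi\oplus\img\psi$. As $\beta^{\le 2}(\xi)\in\img\phi\oplus\img\psi$ for all $\xi$, this surjection annihilates $\img\beta^{\le 2}$, so it descends to a canonical surjection $\cok\beta^{\le 2}\onto\nbrxs\oplus\nbrys$ whose kernel is $(\img\phi\oplus\img\psi)/\img\beta^{\le 2}$.

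The crux is then to identify this kernel canonically with $\brp S$, and this is exactly where the injectivity of $\phi$ and $\psi$ is used. The homomorphism $\brp S\oplus\brp S\to\img\phi\oplus\img\psi$, $(a,a')\mapsto(\phi(a),\psi(a'))$, is an isomorphism because $\phi$ and $\psi$ are injective, and it carries the anti-diagonal subgroup $\{(\xi,\xi^{-1})\}$ isomorphically onto $\img\beta^{\le 2}$. Since the multiplication map exhibits $\brp S$ as the quotient of $\brp S\oplus\brp S$ by its anti-diagonal, I obtain a canonical isomorphism $\brp S\isoto(\img\phi\oplus\img\psi)/\img\beta^{\le 2}$, under which $\xi$ maps to the class of $(\phi(\xi),1)$ (equivalently of $(1,\psi(\xi))$) in $\cok\beta^{\le 2}$. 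Feeding this isomorphism into the exact sequence of the previous paragraph gives the desired sequence $0\to\brp S\to\cok\beta^{\le 2}\to\nbrxs\oplus\nbrys\to 0$. No step beyond the injectivity of $\brp f$ and $\brp g$ presents any difficulty: the remainder is the standard fact that the cokernel of an anti-diagonal embedding $A\into B\oplus C$ with injective legs is an extension of $\cok[A\to B]\oplus\cok[A\to C]$ by $A$.
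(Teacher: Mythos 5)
Your proposal is correct and follows essentially the same route as the paper: the paper factors $\beta^{\le 2}$ as the anti-diagonal $\nabla^{2}\colon\brp S\hookrightarrow\brp S\oplus\brp S$ followed by the injection $(f^{(2)},g^{(2)})$ (injective because $I(f)=I(g)=1$) and applies Lemma \ref{ker-cok}, identifying $\cok\nabla^{2}$ with $\brp S$ via multiplication. You carry out exactly this computation by hand instead of citing the kernel–cokernel lemma, using the same two inputs (injectivity of the pullbacks and the multiplication identification), so there is no substantive difference.
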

\begin{proof} By definition, $\beta^{\le 2}$ factors as
\[
\brp\le S\overset{\nabla^{2}}{\hookrightarrow}\brp S\!\oplus\!\brp S\overset{(\le f^{(2)}\!,\e g^{(2)})}{\hookrightarrow}\brp X\!\oplus\!\brp\, Y,
\]
where the second map is injective since $I\lbe(\lle f\lle)=I\lbe(\le g\lle)=I\lbe(\lle f\!\times_{\be S}\be g\le)=1$. The lemma now follows by applying Lemma \ref{ker-cok} to the above pair of maps noting that $\cok\nabla^{2}$ is canonically isomorphic to $\brp\le S$ via the multiplication map.
\end{proof}
 
\begin{proposition}\label{wdfl} Let $f\colon X\to S$ and $g\colon X\to S$ be morphisms of schemes. If $f\!\times_{\be S}\be g\colon X\!\times_{\be S}\be Y\to S$ has an \'etale quasi-section, then the canonical morphism \eqref{rrs}
\[
R^{\e r}\! f_{*}\bg_{m,\le X}\be\oplus\be R^{\e r}\be g_{*}\bg_{m,\le Y}\to R^{\e r}\be (\le f\!\times_{\be S}\be g\le)_{*}\bg_{m,\le X\be\times_{\be S}\lbe Y}
\]
is injective for every $r\geq 1$. In particular, the map \eqref{ran}
\[
\brxs(\lbe S\e)\be\oplus\be \brys(\lbe S\e)\to\brxys(\lbe S\e)
\]
is injective.
\end{proposition}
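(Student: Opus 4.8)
The plan is to reduce to the case in which $f\times_{S}g$ admits a genuine section and then to split the map \eqref{rrs} explicitly. First I would use the fact that injectivity of a morphism of abelian \'etale sheaves on $S$ may be checked after pullback along the surjective \'etale morphism $\alpha\colon T\to S$ supplied by the hypothesis. Since $\alpha$ is \'etale, $\alpha^{*}$ is exact and, being the right adjoint of the exact functor $\alpha_{!}$, preserves injectives; hence the \'etale base-change isomorphism $\alpha^{*}R^{r}h_{*}\mathcal F\cong R^{r}(h_{T})_{*}(\mathrm{pr}^{*}\mathcal F)$ holds for $h=f\times_{S}g$ and for its projections. Because the construction of $\vartheta$ is built from $p_{X}^{\flat},p_{Y}^{\flat}$ and the multiplication map, all of which are compatible with base change by \eqref{inv}, \eqref{inv2} and the Cartesianness of the relevant squares, this identifies $\alpha^{*}$ of the map \eqref{rrs} with the analogous map attached to $f_{T}\colon X_{T}\to T$ and $g_{T}\colon Y_{T}\to T$. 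As $\alpha$ is an \'etale quasi-section of $f\times_{S}g$, the morphism $h_{T}=f_{T}\times_{T}g_{T}$ acquires a section over $T$; and since $\alpha$ is surjective, $\alpha^{*}$ is faithful and therefore reflects monomorphisms. It thus suffices to treat the case in which $f\times_{S}g$ has a section.

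Assume then that $h=f\times_{S}g$ has a section $\sigma\colon S\to X\times_{S}Y$, and set $\sigma_{X}=p_{X}\circ\sigma$ and $\sigma_{Y}=p_{Y}\circ\sigma$, which are sections of $f$ and $g$ respectively. Write $\phi_{X},\phi_{Y}$ for the restrictions of \eqref{rrs} to the two summands; these are the pullback homomorphisms on $R^{r}(-)_{*}\bg_{m}$ induced by the $S$-morphisms $p_{X}$ and $p_{Y}$, so that \eqref{rrs} sends $(a,b)$ to $\phi_{X}(a)+\phi_{Y}(b)$. I would then introduce the $S$-morphisms $j_{X}=(\id_{X},\sigma_{Y}\circ f)\colon X\to X\times_{S}Y$ and $j_{Y}=(\sigma_{X}\circ g,\id_{Y})\colon Y\to X\times_{S}Y$, which satisfy $p_{X}\circ j_{X}=\id_{X}$, $p_{Y}\circ j_{X}=\sigma_{Y}\circ f$, and symmetrically for $j_{Y}$. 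Let $\psi_{X},\psi_{Y}$ be the pullback homomorphisms attached to $j_{X},j_{Y}$. By the contravariant functoriality of these pullbacks, which is a consequence of \eqref{marv}, one has $\psi_{X}\circ\phi_{X}=\id$, while $\psi_{X}\circ\phi_{Y}$ is the pullback attached to $p_{Y}\circ j_{X}=\sigma_{Y}\circ f$. The latter factors, as an $S$-morphism, through $(S,\id_{S})$ via $X\xrightarrow{f}S\xrightarrow{\sigma_{Y}}Y$, and since $R^{r}(\id_{S})_{*}\bg_{m,S}=0$ for $r\geq 1$, the composite $\psi_{X}\circ\phi_{Y}$ vanishes for every $r\geq 1$; symmetrically $\psi_{Y}\circ\phi_{X}=0$ and $\psi_{Y}\circ\phi_{Y}=\id$. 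Consequently $(\psi_{X},\psi_{Y})$ is a left inverse of \eqref{rrs}, which is therefore split injective for all $r\geq 1$.

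The final assertion is then immediate: applying the left-exact global sections functor $H^{0}(S_{\et},-)$ to the monomorphism \eqref{rrs} with $r=2$ yields the injectivity of \eqref{ran}. I expect the only point demanding genuine care to be the reduction in the first paragraph, namely verifying that the formation of \eqref{rrs} commutes with \'etale base change along $\alpha$, so that $\alpha^{*}$ of \eqref{rrs} really is the corresponding map over $T$; once that compatibility is recorded, the splitting in the second paragraph is entirely formal.
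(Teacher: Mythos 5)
Your proof is correct, and while the first reduction coincides with the paper's, the heart of your argument is genuinely different. Both you and the author use the \'etale quasi-section $\alpha\colon T\to S$ to reduce (via exactness and conservativity of $\alpha^{*}$ together with the compatibility of $R^{\e r}(-)_{*}\bg_{m}$ with \'etale base change) to the case where $f\times_{S}g$ has a section. From there the paper passes further to stalks, i.e.\ to a strictly local base $S$, and then works with the cohomology \emph{groups} $H^{\le r}(X_{\et},\bg_{m})$: it shows $p_{X}^{(r)}$ and $p_{\le Y}^{(r)}$ are injective with trivially intersecting images, the cross-terms dying because the higher \'etale cohomology of a strictly local scheme vanishes. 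You instead stay at the level of sheaves and exhibit an explicit left inverse $(\psi_{X},\psi_{Y})$ of \eqref{rrs}, built from the same auxiliary morphisms ($j_{X}$ and $j_{Y}$ are exactly the paper's $q_{\tau}=1_{X}\times_{S}\tau$ and $q_{\sigma}=\sigma\times_{S}1_{Y}$), with the cross-terms killed by the vanishing of $R^{\e r}(1_{S})_{*}\bg_{m,\le S}$ for $r\geq 1$ rather than by strict locality. This buys you two things: you avoid the ``standard considerations'' needed to identify stalks of higher direct images with cohomology over the strict henselization (which tacitly requires finiteness hypotheses on $f$ and $g$ for the limit argument), and you obtain the slightly stronger conclusion that \eqref{rrs} is \emph{split} injective whenever a genuine section exists. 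The one point you rightly flag as needing care --- that $\alpha^{*}$ of \eqref{rrs} is the corresponding map over $T$ --- is exactly the point the paper settles with its edge-morphism argument and the reference to \cite[IV, Corollary 4.5.8]{sga3}, so your proof is complete once that compatibility is written out.
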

\begin{proof} (After \cite[Proposition 1.5]{sz}) Let $\alpha\colon T\!\to\! S$ be an \'etale quasi-section of $f\times_{\be S} g$. Since $\alpha^{*}\colon S_{\et}^{\e\sim}\to T_{\et}^{\e\sim}$ is an exact functor and $\alpha^{*}\be f_{\lbe *}\bg_{m,\le X}=(f_{T})_{*}\bg_{m,\e X_{T}}$ in $T_{\et}^{\e\sim}$ by \cite[proof of Lemma 2.5]{ga}, the edge morphism $R^{\e r}\be (f_{T})_{*}\bg_{m,\le X_{T}}\to \alpha^{*}\be R^{\e r}\be f_{*}\bg_{m,\le X}$ induced by the spectral sequence $R^{\e s}\be\alpha^{*}\be R^{\e r}\be f_{*}\bg_{m,\le X}\!\!\implies\!\! R^{\e s+r}\be (f_{T})_{*}\bg_{m,\le X_{T}}$ is an isomorphism. Analogous statements hold true when $f$ is replaced by $g$ and $f\be\times_{\be S}\lbe g$.
Thus, by \cite[IV, Corollary 4.5.8]{sga3}, it suffices to prove the proposition when $S$ is replaced with $T$. Since $f_{T}\!\times_{\be T}\! g_{\e T}$ has a section, we may further assume that $f\!\times_{\be S}\lbe g\colon X\!\times_{\be S}\be Y\to S$ has a section $\rho\le\colon\be S\to X\be\times_{\lbe S}\be Y$. Set $\sigma=p_{X}\lbe\circ\lbe\rho\colon S\to X$ and $\tau=p_{\e Y}\be\circ\be\rho\colon S\to Y$, which are sections of $f$ and $g$, respectively. By standard considerations, we only need to check that the map
\[
H^{\le r}\be(X_{\et},\be\bg_{m,\le X})\be\oplus\be H^{\le r}\be(\e Y_{\et},\be\bg_{m,\le Y})\!\to\! H^{\le r}\be((X\!\times_{\be S}\be Y)_{\et},\bg_{m,\e X\lbe\times_{\be S}\lbe Y}), (a,b)\mapsto p_{X}^{(r)}(a)\cdot p_{\le Y}^{(r)}(b),
\]
is injective if $S$ is a strictly local scheme. Let $q_{\le\tau}=1_{X}\be\times_{S}\lbe\tau\colon X\to X\be\times_{\lbe S}\lbe Y$ and $q_{\le\sigma}=\sigma\be\times_{S}\lbe 1_{Y}\colon Y\to X\be\times_{\lbe S}\lbe Y$. Since $p_{X}\circ\e q_{\le\tau}=1_{X}$ and $p_{\le Y}\circ\e q_{\le\sigma}=1_{Y}$, the compositions $q_{\le\tau}^{(r)}\circ p_{X}^{(r)}$ and $q_{\le\sigma}^{(r)}\circ p_{\le Y}^{(r)}$ are the identity maps on $H^{\le r}\be(X_{\et},\bg_{m,\le X})$ and $H^{\le r}\be(Y_{\et},\bg_{m,\le Y})$, respectively. In particular, both $p_{X}^{(r)}$ and $p_{\le Y}^{(r)}$ are injective. On the other hand, since $p_{X}\circ q_{\le\sigma}=\sigma$, $p_{\le Y}\circ q_{\le\tau}=\tau$ and the higher \'etale cohomology of a strictly local scheme is trivial, the compositions $q_{\le\sigma}^{(r)}\circ p_{X}^{(r)}$ and $q_{\le\tau}^{(r)}\circ p_{Y}^{(r)}$ are the trivial maps on $H^{\le r}\be(X_{\et},\bg_{m,\le X})$ and $H^{\le r}\be(Y_{\et},\bg_{m,\le Y})$, respectively. It follows that $p_{X}^{(r)}H^{\le r}\be(X_{\et},\bg_{m,\le X})\cap p_{\le Y}^{(r)}H^{\le r}\be(Y_{\et},\bg_{m,\le Y})=1$, where the intersection takes place inside $H^{\le r}((X\!\times_{\be S}\be Y)_{\et},\bg_{m,\e X\lbe\times_{\be S}\lbe Y})$. We conclude that $p_{X}^{(r)}(a)\cdot p_{\le Y}^{(r)}(b)=1$ if, and only if, $p_{X}^{(r)}(a)=p_{\le Y}^{(r)}(b)=1$. Since both maps $p_{X}^{(r)}$ and $p_{\le Y}^{(r)}$ are injective, the proposition follows.
\end{proof}

\begin{proposition}\label{upadd} Let $S$ be a locally noetherian normal scheme and let $f\colon X\to S$ and $g\colon Y\to S$ be faithfully flat morphisms locally of finite type. Assume that
\begin{enumerate}
\item[(i)] $f\be\times_{\be S}\be g$ has an \'etale quasi-section,
\item[(ii)] the strict henselisations of the local rings of $X$, $Y$ and $X\be\times_{S}\be Y$ are factorial, 
\item[(iii)] for every point $s\in S$ of codimension $\leq 1$, the fibers $X_{\lbe s}$ and $Y_{\lbe s}$ are geometrically integral and
\item[(iv)] for every maximal point $\eta$ of $S$,
\begin{enumerate}
\item[(a)] ${\rm gcd}\le(\le I(X_{\eta}\le),I(\le Y_{\eta}\le))=1$ and
\item[(b)] $\pic\be(X_{\lbe\eta}^{\le\rm s}\be\times_{k(\eta)^{\rm s}}\be Y_{\!\eta}^{\e\rm s}\le)^{\g(\eta)}=(\e\pic X_{\lbe\eta}^{\le\rm s})^{\g(\eta)}\be\oplus\be(\e\pic Y_{\!\eta}^{\e\rm s})^{\g(\eta)}$.
\end{enumerate}
\end{enumerate}
Then the canonical map \eqref{upicm}
\[
\upicxs\oplus\upicys\to\upicxys
\]
is an isomorphism in $\dbs$.
\end{proposition}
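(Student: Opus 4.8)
The plan is to prove that \eqref{upicm} is an isomorphism in $\dbs$ by checking that it induces isomorphisms on cohomology sheaves. First I would verify that $f$, $g$ and $f\times_{S}g$ are schematically dominant. Since $S$ is locally noetherian and normal it is reduced, and each of the three morphisms is faithfully flat --- for $f\times_{S}g$ this follows by writing it as the composite $g\circ p_{Y}$ of the base change $p_{Y}$ of $f$ with the map $g$ --- hence dominant, and therefore schematically dominant by \cite[Proposition 5.4.3, p.~284]{ega1}. Consequently the distinguished triangle \eqref{t5} holds for each of $\upicxs$, $\upicys$ and $\upicxys$, so that all three objects are concentrated in degrees $-1$ and $0$, with $H^{-1}$ the relative-units sheaf (see \eqref{up-1}) and $H^{0}$ the relative Picard sheaf (see \eqref{up-0}). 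A morphism in $\dbs$ between objects concentrated in degrees $-1$ and $0$ is an isomorphism precisely when it induces isomorphisms on $H^{-1}$ and $H^{0}$; and by the construction of \eqref{upicm} from the morphism $\vartheta$, these induced maps are the canonical additivity maps $\uxs\oplus\uys\to\uxys$ on $H^{-1}$ and $\picxs\oplus\picys\to\picxys$ (that is, \eqref{rrs} with $r=1$) on $H^{0}$.

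The map on $H^{-1}$ is exactly the isomorphism \eqref{uadd}. Here I would invoke \cite[Corollary 4.3]{ga}, all of whose hypotheses are in force: $S$ is reduced; $f$ and $g$ are faithfully flat and, being locally of finite type over a locally noetherian base, locally of finite presentation; their maximal geometric fibers are reduced and connected by the geometric integrality assumption at the generic points in (iii); and $f\times_{S}g$ has an \'etale quasi-section by (i). Thus $\uxs\oplus\uys\to\uxys$ is an isomorphism of \'etale sheaves.

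For the map on $H^{0}$, injectivity is immediate from Proposition \ref{wdfl} with $r=1$, the required \'etale quasi-section being furnished by (i). It remains to prove surjectivity of $\picxs\oplus\picys\to\picxys$, which I regard as the crux of the argument. I would check surjectivity on stalks: for a geometric point $\bar s\to S$ with strict henselisation $A=\mathcal O_{S,\bar s}^{\,\mathrm{sh}}$, the stalk of $\picxs$ at $\bar s$ is computed by base change to $A$, so the stalk of the map is the canonical homomorphism $\pic(X_{A})\oplus\pic(Y_{A})\to\pic(X_{A}\times_{A}Y_{A})$. Hypothesis (ii) ensures that the relevant local rings of $X_{A}$, $Y_{A}$ and $(X\times_{S}Y)_{A}$ are factorial, allowing these Picard groups to be described by Weil divisors and, using the geometric integrality of the fibers in codimension $\leq 1$ from (iii), to be reduced to the Picard groups of the maximal fibers; there the additivity is exactly the Galois-descent statement underlying Proposition \ref{opo2}, supplied by \cite[Proposition 5.8]{ga} and built on hypotheses (iv)(a) and (iv)(b). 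The present point is to run this computation stalkwise rather than only over $S$. The delicate part is precisely this surjectivity: one must transfer classes from the factorial local rings of the product to the Picard group of the geometric generic fibre and then use the coprimality of indices in (iv)(a) to separate the contributions of the two factors, whose additivity after Galois descent is guaranteed by (iv)(b). Granting this, \eqref{upicm} induces isomorphisms on $H^{-1}$ and $H^{0}$, and is therefore an isomorphism in $\dbs$.
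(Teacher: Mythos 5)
Your reduction to cohomology sheaves, the verification of schematic dominance, and the treatment of $H^{-1}$ via \cite[Corollary 4.3]{ga} coincide with the paper's proof, and the injectivity of the $H^{0}$-map via Proposition \ref{wdfl} is correct (though the paper does not need it). The gap is in the surjectivity of $\picxs\oplus\picys\to\picxys$, which you rightly call the crux but do not actually prove. If you test the map on a geometric point lying over a maximal point $\eta$ of $S$, the stalk map is
\[
\pic\be(X_{\lbe\eta}^{\le\rm s})\oplus\pic\be(Y_{\!\eta}^{\e\rm s})\to\pic\be(X_{\lbe\eta}^{\le\rm s}\be\times_{k(\eta)^{\rm s}}\be Y_{\!\eta}^{\e\rm s}),
\]
a statement over the separably closed field $k(\eta)^{\rm s}$. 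There the absolute Galois group is trivial and every separable index equals $1$, so hypotheses (iv)(a) and (iv)(b) --- which concern only $\g(\eta)$-invariants and indices over $k(\eta)$ --- give no leverage, and \cite[Proposition 5.8]{ga}, the engine of Proposition \ref{opo2}, does not apply as stated. The displayed decomposition is strictly stronger than (iv)(b): the point of (iv)(b), as emphasized in the Introduction, is precisely that it is weaker than additivity of geometric Picard groups. Hence ``using the coprimality of indices to separate the contributions'' is vacuous at the stalk, and your argument stalls exactly where you write ``Granting this''.

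The paper avoids this by never passing to stalks: it checks that for every \'etale surjective $T\to S$ the map $\npic(X_T/T)\oplus\npic(Y_T/T)\to\npic(X_T\times_T Y_T/T)$ of presheaf sections is an isomorphism, which suffices after sheafification. The substantive input is that hypotheses (iii) and (iv) are stable under such base change --- in particular, (iv)(b) holds for every open subgroup $\g(\eta')\subseteq\g(\eta)$ attached to a maximal point $\eta'$ of $T$, using the divisibility $I((X_T)_{\eta'})\mid I(X_\eta)$ from \eqref{genf-0} --- so that Proposition \ref{opo2} applies over each such $T$; the geometric decomposition at the stalk is recovered only a posteriori, as a filtered colimit over all these $T$. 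If you wish to keep the stalkwise formulation, you must prove this stability statement and carry out the colimit (and also justify identifying the stalk of $\picxs$ with the Picard group of the base change to $\s O_{S,\bar{s}}^{\,\rm sh}$, which requires a limit argument and quasi-compactness hypotheses not assumed here). As written, the key step is asserted rather than proved.
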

\begin{proof} Note that $f,g$ and $f\!\times_{\be S}\! g$ are schematically dominant and therefore \eqref{up-1} and \eqref{up-0} are valid for each of these morphisms. Now, by (i), (iii) and \cite[Corollary 4.3]{ga}, the map $\uxs\be\oplus\be\uys\to\uxys$, i.e., $H^{-1}\lbe(\e\upicxs\be\oplus\be\upicys)\to H^{-1}\lbe(\le\upicxys)$ \eqref{up-1}\le, is an isomorphism of \'etale sheaves on $S$. On the other hand, since $\picxs$ is the \'etale sheaf on $S$ associated to the presheaf $T\mapsto\npic\!(X_{T}\lbe/\e T\le)$ \cite[Definition 9.2.2, p.~252]{klei}, in order to check that $H^{\le 0}\lbe(\e\upicxs\le\oplus\le\upicys)\to H^{\le 0}\lbe(\le\upicxys)$, i.e., $\picxs\oplus\picys\to\picxys$ \eqref{up-0}, is an isomorphism of \'etale sheaves on $S$, it suffices to check that, for every \'etale and surjective morphism $h\colon T\to S$, the canonical map
\begin{equation}\label{ut}
\npic\be(X_{\lbe T}\lbe/\e T\le)\oplus \npic\be(\e Y_{\lbe T}\lbe/\e T\le)\mapsto \npic\be(X_{\lbe T}\!\times_{T}\! Y_{\lbe T}\lbe/\e T\e)
\end{equation}
is an isomorphism of abelian groups. By (ii), $X_{\lbe T}, Y_{\lbe T}$ and $X_{\lbe T}\!\times_{T}\! Y_{\lbe T}$ are locally factorial. Further, it is not difficult to verify that hypotheses (iii) and (iv) are stable under the base change $h\colon T\to S$ (this verification makes use of the following observation: if $\eta^{\e\prime}$ is a maximal point of $T$, then $\eta=h(\eta^{\e\prime}\le)$ is a maximal point of $S$ and $I(\le (\lbe X_{\lbe T}\lbe)_{\eta^{\le\prime}}\lbe)\!\mid\! I(X_{\eta}\le)$ by \eqref{genf-0}). Now Proposition \ref{opo2} shows that \eqref{ut} is an isomorphism.
\end{proof}

The following corollary of the proposition is a variant of \cite[Lemma 6.6(ii)]{san}.

\begin{corollary}\label{bradd} Let $S$ be a locally noetherian regular scheme and let $f\colon X\to S$ and $g\colon Y\to S$ be smooth and surjective morphisms. Assume that	
\begin{enumerate}
\item[(i)] either
\begin{enumerate}
\item[(a)] $H^{\le 3}\lbe(S_{\et},\bg_{m,S})=0$ or
\item[(b)] $I(\le f\!\times_{\be S}\be g\le)$ is defined and is equal to $1$,
\end{enumerate}
\item[(ii)] for every point $s\in S$ of codimension $\leq 1$, the fibers $X_{\lbe s}$ and $Y_{\lbe s}$ are geometrically integral, and
\item[(iii)] for every maximal point $\eta$ of $S$,
\begin{enumerate}
\item[(a)] ${\rm gcd}\le(\le I(X_{\eta}\le),I(\le Y_{\eta}\le))=1$ and
\item[(b)] $\pic\be(X_{\lbe\eta}^{\le\rm s}\be\times_{k(\eta)^{\rm s}}\be Y_{\!\eta}^{\e\rm s}\le)^{\g(\eta)}=(\e\pic X_{\lbe\eta}^{\le\rm s})^{\g(\eta)}\be\oplus\be(\e\pic Y_{\!\eta}^{\e\rm s})^{\g(\eta)}$.
\end{enumerate}
\end{enumerate}	
Then the canonical map \eqref{brad}
\[
\bra\be\oplus\be\bray\to\braxy
\]
is an isomorphism of abelian groups.
\end{corollary}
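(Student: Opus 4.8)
The plan is to obtain the isomorphism by applying $H^{\le 1}\lbe(S_{\et},-)$ to the isomorphism of Proposition \ref{upadd} and then identifying the resulting groups with the relevant algebraic Brauer groups via Corollary \ref{nco}(ii). First I would verify that the hypotheses of Proposition \ref{upadd} hold. Since $S$ is regular it is normal, and since $f$ and $g$ are smooth and surjective they are faithfully flat and locally of finite type; moreover $f\!\times_{\be S}\be g$ is again smooth and surjective, hence has an \'etale quasi-section by \cite[${\rm IV}_{4}$, Corollary 17.16.3(ii)]{ega}, which is hypothesis (i) of Proposition \ref{upadd}. As $f$, $g$ and $f\!\times_{\be S}\be g$ are smooth and $S$ is regular, the schemes $X$, $Y$ and $X\be\times_{\be S}\be Y$ are regular; consequently the strict henselisations of their local rings are regular, hence factorial, which is hypothesis (ii). Finally, hypotheses (iii) and (iv) of Proposition \ref{upadd} are, respectively, hypotheses (ii) and (iii) of the present statement. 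Thus Proposition \ref{upadd} applies and the canonical morphism \eqref{upicm} is an isomorphism $\upicxs\oplus\upicys\isoto\upicxys$ in $\dbs$.

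Applying $H^{\le 1}\lbe(S_{\et},-)$ then yields an isomorphism $H^{\le 1}\lbe(S_{\et},\upicxs)\oplus H^{\le 1}\lbe(S_{\et},\upicys)\isoto H^{\le 1}\lbe(S_{\et},\upicxys)$. To identify these three groups with $\bra$, $\bray$ and $\braxy$ I would invoke Corollary \ref{nco}(ii). Under hypothesis (i)(a) the condition $H^{\le 3}\lbe(S_{\et},\bg_{m,S})=0$ applies verbatim to each of $f$, $g$ and $f\!\times_{\be S}\be g$. Under hypothesis (i)(b) the equality $I(\le f\!\times_{\be S}\be g\le)=1$ forces $I(\le f\le)=I(\le g\le)=1$, since $I(\le f\le)$ and $I(\le g\le)$ are both defined and divide $I(\le f\!\times_{\be S}\be g\le)$ (see the remarks preceding \eqref{genf}); hence the second alternative of Corollary \ref{nco}(ii) applies to all three morphisms. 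In either case $\bra=H^{\le 1}\lbe(S_{\et},\upicxs)$, $\bray=H^{\le 1}\lbe(S_{\et},\upicys)$ and $\braxy=H^{\le 1}\lbe(S_{\et},\upicxys)$.

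The main obstacle is to check that, under these identifications, the isomorphism induced on $H^{\le 1}\lbe(S_{\et},-)$ by the morphism \eqref{upicm} is precisely the canonical map \eqref{brad}; this is a naturality statement. The projections $p_{\be X}$ and $p_{\le Y}$ are morphisms of $S$-schemes, so each induces a morphism of the defining triangles \eqref{t3} compatible with the maps $f^{\le\flat}$, $g^{\le\flat}$ and the relevant edge morphisms. By Lemma \ref{coh-rdiv}(iv) and \eqref{h2f}, passing to $S_{\et}$-cohomology and to cokernels identifies the map $H^{\le 1}\lbe(S_{\et},\upicxs)\to H^{\le 1}\lbe(S_{\et},\upicxys)$ induced by $p_{\be X}$ with the map $\bra\to\braxy$ induced by the pullback $p_{\be X}^{(2)}$, and likewise for $p_{\le Y}$. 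Since \eqref{upicm} is induced by $\vartheta$, whose higher direct images \eqref{rrs} are the multiplicative combination of these two edge-morphism pullbacks---that is, exactly the map \eqref{pxy2} defining \eqref{brad}---the two maps on $H^{\le 1}\lbe(S_{\et},-)$ coincide. Being the $H^{\le 1}$ of an isomorphism, \eqref{brad} is therefore an isomorphism of abelian groups, as claimed.
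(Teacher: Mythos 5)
Your proposal is correct and follows essentially the same route as the paper: verify the hypotheses of Proposition \ref{upadd} (the \'etale quasi-section from smoothness, factoriality of strict henselisations from regularity of $X$, $Y$ and $X\be\times_{\be S}\be Y$), apply $H^{\le 1}\lbe(S_{\et},-)$ to the resulting isomorphism, and identify the three groups via Corollary \ref{nco}(ii) using the divisibility $I(\le f\le)\mid I(\le f\!\times_{\be S}\be g\le)$ and $I(\e g\le)\mid I(\le f\!\times_{\be S}\be g\le)$. Your additional naturality check that the map induced on $H^{\le 1}$ by \eqref{upicm} is the canonical map \eqref{brad} is a point the paper leaves implicit, and is a welcome extra precision.
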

\begin{proof} Hypothesis (i) of Proposition \ref{upadd} clearly holds since $f\!\times_{\be S}\lbe g$ is smooth. On the other hand, since $S$ is regular and $f$ and $g$ are smooth, $X,Y$ and $X\be\times_{\lbe S}\be Y$ are regular as well \cite[${\rm IV}_{4}$, Proposition 17.5.8(iii)]{ega}, whence the strict henselization of every local ring of $X,Y$ and $X\be\times_{\lbe S}\be Y$ is regular (and therefore factorial) \cite[Lemma 15.41.10]{sp}.
Thus hypothesis (ii) of Proposition \ref{upadd} also holds. Thus all the hypotheses of Proposition \ref{upadd} hold, whence the canonical morphism 
$\upicxs\oplus\upicys\to \upicxys$ is an isomorphism in $\dbs$ which induces an isomorphism of abelian groups $H^{\le 1}\be(S_{\et},\upicxs)\oplus H^{\le 1}\be(S_{\et},\upicys)\isoto H^{\le 1}\be(S_{\et},\upicxys)$.
Finally, since $I(\le f\le)$ and $I(\e g\le)$ both divide $I(\le f\!\times_{\be S}\lbe g\le)$, hypothesis (i) and Corollary \ref{nco}(ii) together show that $\br_{\!\rm a}^{\prime}(Z/S)= H^{\le 1}\be(S_{\et},{\rm UPic}_{Z/S})$ for $Z=X, Y$ and $X\times_{S}Y$, whence the corollary follows.
\end{proof}

\begin{proposition}\label{bro!} Let $S$ be a locally noetherian regular scheme and let $f\colon X\to S$ and $g\colon Y\to S$ be smooth and surjective morphisms. Assume that	
\begin{enumerate}
\item[(i)] the \'etale index $I\lbe(\le f\!\times_{\be S}\be g\le)$ is defined and is equal to $1$, 
\item[(ii)] for every point $s\in S$ of codimension $\leq 1$, the fibers $X_{\lbe s}$ and $Y_{\lbe s}$ are geometrically integral, and
\item[(iii)] for every maximal point $\eta$ of $S$,
\[
\pic\be(X_{\lbe\eta}^{\le\rm s}\be\times_{k(\eta)^{\rm s}}\be Y_{\!\eta}^{\e\rm s}\le)^{\g(\eta)}=(\e\pic X_{\lbe\eta}^{\le\rm s})^{\g(\eta)}\be\oplus\be(\e\pic Y_{\!\eta}^{\e\rm s})^{\g(\eta)}.
\]
\end{enumerate}	
Then there exist canonical exact sequences of abelian groups
\[
0\to\brp S\to\bro\be\oplus\be\broys\to\broxys\to 0,
\]
where the first nontrivial map is induced by \eqref{beta2} and the second nontrivial map is \eqref{nad}, and
\begin{equation}\label{brob}
\begin{array}{rcl}
0\to\nbrxs\be\oplus\be \nbrys\to\nbrxys&\to&\,\displaystyle\frac{\brxys\lbe(\lbe S\e)}{\brxs(\lbe S\e)\!\oplus\! \brys(\lbe S\e)}\\\\
&\to& \displaystyle\frac{\brtxy}{\brt\!\oplus\!\brty}\to 0,
\end{array}
\end{equation}
where the first nontrivial map is \eqref{br1a}.
\end{proposition}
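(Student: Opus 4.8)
The plan is to derive both sequences from the four-term exact sequences \eqref{br-seq} and \eqref{bps1}, applied to $X$, to $Y$ and to $X\!\times_{\be S}\!Y$, together with the additivity isomorphism \eqref{brad} of Corollary \ref{bradd}, by repeated application of the snake lemma (Lemma \ref{ker-cok}); the one non-formal input will be the injectivity of \eqref{ran2}. First I would record the reductions forced by hypothesis (i): since $I(\le f\le)$ and $I(\e g\le)$ both divide $I\lbe(\le f\!\times_{\be S}\be g\le)=1$ (see the discussion preceding \eqref{genf}), we have $I(\le f\le)=I(\e g\le)=1$, so the pullbacks on $H^{\le 2}$ are injective by \cite[Remark 3.1(d)]{ga} and hence $\bxs=\bys=\brp(\lbe X\!\times_{\be S}\!Y\!/\be S\le)=0$. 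Exactly as in the proof of Corollary \ref{bradd}, the regularity of $S$ and hypotheses (i)--(iii) guarantee that all hypotheses of Corollary \ref{bradd} and of Proposition \ref{upadd} hold; thus \eqref{brad} is an isomorphism and $\upicxs\oplus\upicys\isoto\upicxys$ in $\dbs$.

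For the first sequence I would apply the snake lemma to the morphism of short exact sequences obtained from \eqref{br-seq} once the vanishing terms $\bxs$, $\bys$, $\brp(\lbe X\!\times_{\be S}\!Y\!/\be S\le)$ are removed: the top row $0\to\brp S\oplus\brp S\to\bro\oplus\broys\to\bra\oplus\bray\to 0$ maps to the bottom row $0\to\brp S\to\broxys\to\braxy\to 0$, with right vertical the isomorphism \eqref{brad}, middle vertical \eqref{nad}, and left vertical the multiplication $\mu\colon(\xi,\zeta)\mapsto\xi\be\zeta$. The left square commutes because $p_{\be X}^{(\lbe 2\lbe)}\!\be\circ\be f^{(\lbe 2\lbe)}=(\le f\!\times_{\be S}\be g\le)^{(\lbe 2\lbe)}=p_{\le Y}^{(\lbe 2\lbe)}\!\be\circ\be g^{(\lbe 2\lbe)}$ by functoriality of \eqref{rr}, and the right square commutes since both verticals are induced by \eqref{pxy2}. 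As $\cok\mu=0$ and $\krn\mu=\img\nabla^{2}\cong\brp S$, and the right vertical is an isomorphism, the snake sequence degenerates to an isomorphism $\brp S\cong\krn\lbe[\text{\eqref{nad}}]$, realized by $\beta^{\le 2}$, together with the surjectivity of \eqref{nad}; this is exactly the first asserted sequence.

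The main obstacle is the injectivity of the map $d\colon\brt\oplus\brty\to\brtxy$ of \eqref{ran2}, which I would deduce from Proposition \ref{tor} and the additivity isomorphism. Proposition \ref{tor}, applicable since $I(\le f\le)=I(\e g\le)=I\lbe(\le f\!\times_{\be S}\be g\le)=1$, provides canonical injections $\brt\into H^{\le 2}\lbe(S_{\et},\upicxs)$, and likewise for $Y$ and for $X\!\times_{\be S}\!Y$. Because the product morphism \eqref{upicm} was built from $\vartheta$ compatibly with the triangle \eqref{t3} (the two commutative squares defining \eqref{upicm}), these injections are natural with respect to \eqref{upicm}; consequently the square with top edge the injective direct sum $\brt\oplus\brty\into H^{\le 2}\lbe(S_{\et},\upicxs)\oplus H^{\le 2}\lbe(S_{\et},\upicys)$, right edge the additivity isomorphism $H^{\le 2}\lbe(S_{\et},\upicxs)\oplus H^{\le 2}\lbe(S_{\et},\upicys)\isoto H^{\le 2}\lbe(S_{\et},\upicxys)$, and bottom edge $d$ followed by the injection $\brtxy\into H^{\le 2}\lbe(S_{\et},\upicxys)$, commutes. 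The top-then-right composite is then injective, forcing $d$ to be injective. Checking this naturality against the construction of \eqref{upicm} is the genuinely non-formal point of the argument.

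With $d$ injective I would finish the second sequence by applying the snake lemma twice to the morphism of four-term exact sequences \eqref{bps1} for $X$ and for $Y$ added together, mapping to that for $X\!\times_{\be S}\!Y$, whose verticals are \eqref{brad}, \eqref{br1a}, \eqref{ran} and \eqref{ran2} (the squares commuting by the functoriality built into these canonical maps). Writing $c$ for \eqref{ran} (injective by Proposition \ref{wdfl}) and $d$ for \eqref{ran2}, I would split each four-term sequence at its middle image $I_{1}=\krn\lbe[\brxs(\lbe S\e)\oplus\brys(\lbe S\e)\to\brt\oplus\brty]$, respectively $I_{2}=\krn\lbe[\brxys(\lbe S\e)\to\brtxy]$, inducing $i\colon I_{1}\to I_{2}$. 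The snake lemma for the two upper short exact sequences, whose outer vertical \eqref{brad} is an isomorphism, gives $\krn\lbe[\text{\eqref{br1a}}]\cong\krn i$ and $\cok\lbe[\text{\eqref{br1a}}]\cong\cok i$; the snake lemma for the two lower ones, with $c$ injective, gives $\krn i=0$ (so \eqref{br1a} is injective) and the exact sequence $0\to\krn d\to\cok i\to\cok c\to\cok d\to 0$. Since $\krn d=0$, splicing $0\to\cok i\to\cok c\to\cok d\to 0$ with $0\to\bigl(\nbrxs\oplus\nbrys\bigr)\to\nbrxys\to\cok\lbe[\text{\eqref{br1a}}]\to 0$ and the isomorphism $\cok\lbe[\text{\eqref{br1a}}]\cong\cok i$ yields \eqref{brob}; here the quotients $\brxys(\lbe S\e)/(\brxs(\lbe S\e)\oplus\brys(\lbe S\e))=\cok c$ and $\brtxy/(\brt\oplus\brty)=\cok d$ are meaningful precisely because $c$ and $d$ are injective.
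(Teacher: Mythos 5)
Your proposal is correct and follows essentially the same route as the paper: the first sequence comes from the snake lemma applied to diagrams built out of \eqref{br-seq} together with the additivity isomorphism of Corollary \ref{bradd} (your collapsing of the paper's two diagrams into one, via the vanishing of the relative Brauer groups forced by $I(\le f\le)=I(\e g\le)=I\lbe(\le f\!\times_{\be S}\be g\le)=1$, is only a cosmetic streamlining of the paper's use of $\img\br^{\be *}\be f\oplus\img\br^{\be *}\be g$ as an intermediate term); the injectivity of \eqref{ran2} is obtained exactly as in the paper's diagram \eqref{bab} from Proposition \ref{tor} and the $H^{\le 2}$-additivity of $\upicxs$; and the splitting of the \eqref{bps1} diagrams at the kernels $I$ and $J$ followed by two applications of the snake lemma, with Proposition \ref{wdfl} supplying the injectivity of \eqref{ran}, is the paper's argument verbatim.
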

\begin{proof} The sequence \eqref{br-seq} induces commutative diagrams of abelian groups with exact rows
\[
\xymatrix{0\ar[r]&\bxs\be\oplus\be \bys\ar[d]^(.45){\simeq}\ar[r]& \brp\le S\!\oplus\!\brp\le  S\ar@{->>}[d]^(.45){(\e\cdot\e)}\ar[r]& \img\br^{\be *}\be f\!\oplus\!\img\br^{\be *}\be g\ar@{->>}[d]\ar[r]& 0\\
0\ar[r]&\brp(X\!\times_{\be S}\! Y\be/\be S\le)\ar[r]& \brp\le  S\ar[r]& \img \br^{*}\lbe(f\be\times_{\be S}\be g\le)\ar[r]& 0.
}
\]
and
\[
\xymatrix{0\ar[r]&\img\br^{\be *}\be f\!\oplus\!\img\br^{\be *}\be g\,\ar@{->>}[d]\ar[r]& \bro\!\oplus\!\broys\ar@{->>}[r]\ar[d]& \bra\!\oplus\!\bray\ar[d]^{\simeq}\ar[r]& 0\\
0\ar[r]&\img\br^{*}\be(f\be\times_{\be S}\be g\le)\ar[r]& \broxys\ar@{->>}[r]& \braxy\ar[r]& 0.
}
\]
In the first diagram, the left-hand vertical map is an isomorphism by (i) and \cite[Proposition 3.2]{ga}. The middle vertical map in the first diagram is clearly surjective and its kernel can be identified with $\brp\le S$ via the map \eqref{del2}. In the second diagram, the right-hand vertical map is an isomorphism by Corollary \ref{bradd} (note that, by \eqref{genf}, (i) implies hypothesis (iii)(a) of Corollary \ref{bradd}). The first sequence of the proposition now follows by applying the snake lemma to both diagrams above.

Next, by (i), $I\lbe(\lle f\lle)=I\lbe(\e g\lle)=1$. Thus, by Proposition \ref{tor}, there exists a canonical commutative diagram with exact rows
\begin{equation}\label{bab}
\xymatrix{0\ar[r]&\brt\be\oplus\be \brty\ar[d]\ar[r]& H^{\le 2}\lbe(S_{\et},\upicxs)\be\oplus\! H^{\le 2}\lbe(S_{\et},\upicys)\ar[d]^{\simeq}\\
0\ar[r]&\brtxy\ar[r]&H^{\le 2}\lbe(S_{\et},\upicxys),
}
\end{equation}
where the right-hand vertical map is an isomorphism by Proposition \ref{upadd} (see the proof of Corollary \ref{bradd}). Next we will split the four-column commutative diagram with exact rows induced by the sequences \eqref{bps1} for $f, g$ and $f\!\times_{\be S}\!g$. Set
\[
\hspace{0.7cm}I=\krn\!\be\left[\e\brxs(\lbe S\e)\!\oplus\! \brys(\lbe S\e)\twoheadrightarrow \brt\be\oplus\be \brty\e\right]
\]
and
\[
J=\krn\!\be\left[\e\brxys(\lbe S\e)\twoheadrightarrow \brtxy\e\right],
\]
where the indicated maps are induced by the projections in the sequences \eqref{bps1} for $f,g$ and $f\be\times_{\be S}\be g$. Then the rows of the following commutative diagrams are exact
\begin{equation}\label{a0}
\xymatrix{0\ar[r]& \bra\!\oplus\!\bray\ar[r]\ar[d]^{\simeq}&\nbrxs\!\oplus\!\nbrys\ar[r]\ar[d]& I\ar[r]\ar@{^{(}->}[d]& 0\\
0\ar[r]& \braxy\ar[r]&\nbrxys\ar[r]& J\ar[r]& 0
}
\end{equation}
and
\begin{equation}\label{b0}
\xymatrix{0\ar[r]& I\ar[r]\ar@{^{(}->}[d]&\brxs(\lbe S\e)\!\oplus\! \brys(\lbe S\e)\ar[r]\ar@{^{(}->}[d]& \brt\!\oplus\!\brty\ar[r]\ar@{^{(}->}[d]& 0\\
0\ar[r]& J\ar[r]&\brxys(\lbe S\e)\ar[r]& \brtxy\ar[r]& 0.
}
\end{equation}
The middle vertical map in \eqref{b0} is injective since, by smoothness, $f\!\times_{\be S}\!g$ has an \'etale quasi-section and Proposition \ref{wdfl} applies. On the other hand, diagram \eqref{bab} shows that the third vertical map in \eqref{b0} is injective. We conclude that the first vertical map in \eqref{b0} (which is the same as the third vertical map in \eqref{a0}) is injective. The left-hand vertical map in \eqref{a0} is an isomorphism by Corollary \ref{bradd}. The sequence \eqref{brob} now follows by applying the snake lemma to diagrams \eqref{a0} and \eqref{b0} and assembling the resulting exact sequences.
\end{proof}

The following statement is an immediate consequence of the exactness of \eqref{brob}.

\begin{corollary} \label{cool} Let the notation and hypotheses be those of the proposition. If $\brxys\lbe(\lbe S\e)=\brxs(\lbe S\e)\be\oplus\be \brys(\lbe S\e)$, then the canonical map 
\[
\nbrxs\be\oplus\be \nbrys\to\nbrxys
\]
is an isomorphism. Consequently, there exists a canonical exact sequence of abelian groups
\[
0\to\brp S\to \brp(X\!\be\times_{\be S}\! Y\le)\to\nbrxs\be\oplus\be \nbrys\to 0.
\]
\end{corollary}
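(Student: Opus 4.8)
The plan is to read off both assertions directly from the four-term exact sequence \eqref{brob} of Proposition \ref{bro!}, which already encodes all the homological content required; as the statement itself signals, this is an immediate consequence of the exactness of \eqref{brob}.

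First I would establish the isomorphism claim. By hypothesis $\brxys(\lbe S\e)=\brxs(\lbe S\e)\be\oplus\be\brys(\lbe S\e)$, so the third term of \eqref{brob}, namely $\brxys(\lbe S\e)/(\brxs(\lbe S\e)\be\oplus\be\brys(\lbe S\e))$, is zero. Substituting this vanishing into \eqref{brob}, exactness at $\nbrxys$ shows that the canonical map $\nbrxs\be\oplus\be\nbrys\to\nbrxys$ is surjective, while exactness at $\nbrxs\be\oplus\be\nbrys$ shows it is injective; hence it is an isomorphism. As a free byproduct, exactness at the vanishing third term forces $\brtxy/(\brt\be\oplus\be\brty)=0$ as well, though this is not needed for what follows.

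Next I would produce the short exact sequence. By the definition \eqref{nbr} of $\nbrxys$ as the cokernel of $\brp(\le f\!\times_{\be S}\be g\le)\colon\brp S\to\brp(X\!\be\times_{\be S}\! Y\le)$, one always has an exact sequence $\brp S\to\brp(X\!\be\times_{\be S}\! Y\le)\to\nbrxys\to 0$. Hypothesis (i) of the proposition asserts that $I(\le f\!\times_{\be S}\be g\le)$ is defined and equal to $1$, so the pullback map $\brp(\le f\!\times_{\be S}\be g\le)=(\le f\!\times_{\be S}\be g\le)^{(\lbe 2\lbe)}$ \eqref{rr} is injective by the criterion recorded immediately after \eqref{rr}. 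This upgrades the previous sequence to a short exact sequence $0\to\brp S\to\brp(X\!\be\times_{\be S}\! Y\le)\to\nbrxys\to 0$. Composing its final surjection with the isomorphism $\nbrxys\isoto\nbrxs\be\oplus\be\nbrys$ obtained in the first step then yields the asserted sequence $0\to\brp S\to\brp(X\!\be\times_{\be S}\! Y\le)\to\nbrxs\be\oplus\be\nbrys\to 0$.

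There is no substantive obstacle here, since both halves are pure diagram-chasing within already-established exact sequences. The only step requiring a moment's attention is correctly identifying the first map of the target sequence as the pullback $\brp(\le f\!\times_{\be S}\be g\le)$ and invoking the index-one hypothesis to secure its injectivity; the rest is the bookkeeping of feeding the vanishing quotient into \eqref{brob}.
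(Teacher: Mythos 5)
Your proposal is correct and follows exactly the route the paper intends: the paper offers no written proof beyond declaring the corollary an immediate consequence of the exactness of \eqref{brob}, and your two steps (killing the third term of \eqref{brob} to get the isomorphism, then using the definition \eqref{nbr} of $\nbrxys$ as $\cok\brp(\le f\!\times_{\be S}\be g\le)$ together with the injectivity of $(\le f\!\times_{\be S}\be g\le)^{(2)}$ supplied by hypothesis (i) and the remark after \eqref{rr}) are precisely the details being elided.
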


\begin{remark} If $S=\spec k$, where $k$ is a field, and the morphisms $f\colon X\to\spec k$ and $g\colon X\to\spec k$ are quasi-compact and quasi-separated, the groups $I$ and $J$ which appear in the proof of Proposition \ref{bro!} are canonically isomorphic, respectively, to the direct sum of the transcendental Brauer groups of $X$ and $Y$ and the transcendental Brauer group of $X\be\times_{ k}\lbe Y$ defined in Remark \ref{int}(c). Thus the proof of the proposition yields the following: if $X$ and $Y$ are smooth $k$-varieties such that $I(\le X\be\times_{\le k}\lbe Y\le)=1$ and $\pic\be(X^{\le\rm s}\be\times_{\ks}\be Y^{\e\rm s}\le)^{\g}=(\e\pic X^{\le\rm s})^{\g}\be\oplus\be(\e\pic Y^{\e\rm s})^{\g}$, then the canonical map of transcendental Brauer groups
\[
\br_{\be{\rm t}}^{\le\prime}\lbe(\be X\!/k)\oplus \br_{\be{\rm t}}^{\le\prime}\lbe(\le Y\!\lbe/k)\to \br_{\be{\rm t}}^{\le\prime}\lbe(\lbe X\!\times_{\be k}\be Y\!/k)
\]
is injective.
\end{remark}

We can now prove the main theorem of this paper.

\begin{theorem}\label{main} Let $S$ be a locally noetherian regular scheme and let $f\colon X\to S$ and $g\colon Y\to S$ be smooth and surjective morphisms. Assume that	
\begin{enumerate}
\item[(i)] the \'etale index $I\lbe(\le f\!\times_{\be S}\be g\le)$ is defined and is equal to $1$, 
\item[(ii)] for every point $s\in S$ of codimension $\leq 1$, the fibers $X_{\lbe s}$ and $Y_{\lbe s}$ are geometrically integral, and
\item[(iii)] for every maximal point $\eta$ of $S$,
\[
\pic\be(X_{\lbe\eta}^{\le\rm s}\be\times_{k(\eta)^{\rm s}}\be Y_{\!\eta}^{\e\rm s}\le)^{\g(\eta)}=(\e\pic X_{\lbe\eta}^{\le\rm s})^{\g(\eta)}\be\oplus\be(\e\pic Y_{\!\eta}^{\e\rm s})^{\g(\eta)}.
\]
\end{enumerate}	
Then there exists a canonical exact sequence of abelian groups
\[
\begin{array}{rcl}
0\to\brp S\overset{\!\beta^{\le 2}}{\to}\brp X\!\oplus\!\brp\, Y\overset{\!p_{\lbe X\lbe Y}^{\le 2}}{\to} \brp(X\!\be\times_{\be S}\be\lbe Y\le)&\to&\displaystyle\frac{\brxys\lbe(\lbe S\e)}{\brxs(\lbe S\e)\!\oplus\! \brys(\lbe S\e)}\\\\
&\to& \displaystyle\frac{\brtxy}{\brt\!\oplus\!\brty}\to 0,
\end{array}
\]
where $\beta^{2}$ and $p_{\be X\lbe Y}^{\le 2}$ are the maps \eqref{beta2} and \eqref{pxy2}, respectively.
\end{theorem}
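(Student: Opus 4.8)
The plan is to combine the two exact sequences already furnished by Proposition~\ref{bro!} with a short ``first three terms'' computation, the latter obtained from the functoriality of \eqref{bps0} via the snake lemma. First I would record the elementary reductions. Since $f\!\times_{\be S}\be g$ is smooth it has an \'etale quasi-section, so hypothesis (i) and \eqref{genf} give $I(\le f\le)=I(\e g\le)=1$; hence $f^{(2)}$ and $g^{(2)}$ are injective \cite[Remark 3.1(d)]{ga}, and therefore $\beta^{\le 2}=(f^{(2)},g^{(2)})\be\circ\be\nabla^{2}$ is injective, which is the asserted exactness at $\brp S$. Using the functoriality relation $p_{\be X}^{(2)}\be\circ\be f^{(2)}=(\le f\!\times_{\be S}\be g\le)^{(2)}=p_{\le Y}^{(2)}\be\circ\be g^{(2)}$ one checks $p_{\lbe X\lbe Y}^{\le 2}\be\circ\be\beta^{\le 2}=0$, so the first three maps form a complex. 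Finally, since $\brp\lbe f=f^{(2)}$ factors through $\bro$ (the factorization preceding \eqref{brf1}), the map $\beta^{\le 2}$ restricts to the injection $\brp S\into\bro\be\oplus\be\broys$ appearing in the first sequence of Proposition~\ref{bro!}, and $p_{\lbe X\lbe Y}^{\le 2}$ restricts to \eqref{nad}.

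The core step is exactness at $\brp X\!\oplus\!\brp\,Y$ and the computation of $\cok p_{\lbe X\lbe Y}^{\le 2}$. Writing $M_{X}=\img\be[\brp X\to\brxs(\lbe S\e)]$, $M_{Y}=\img\be[\brp\,Y\to\brys(\lbe S\e)]$ and $M_{X\be\times_{\be S}Y}=\img\be[\brp(X\!\times_{\be S}\be Y)\to\brxys(\lbe S\e)]$, the sequence \eqref{bps0}, split at its middle image, provides a morphism of short exact sequences
\[
\xymatrix{
0\ar[r]&\bro\be\oplus\be\broys\ar[d]\ar[r]&\brp X\!\oplus\!\brp\,Y\ar[d]^{p_{\lbe X\lbe Y}^{\le 2}}\ar[r]&M_{X}\be\oplus\be M_{Y}\ar[d]\ar[r]&0\\
0\ar[r]&\broxys\ar[r]&\brp(X\!\times_{\be S}\be Y)\ar[r]&M_{X\be\times_{\be S}Y}\ar[r]&0
}
\]
in which the left-hand vertical map is \eqref{nad}, the middle is $p_{\lbe X\lbe Y}^{\le 2}$, and the right-hand map is the restriction of \eqref{ran} (the right square commuting by naturality of \eqref{canb}). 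By the first exact sequence of Proposition~\ref{bro!} the left-hand map is surjective with kernel $\img\beta^{\le 2}\simeq\brp S$, while \eqref{ran} is injective by Proposition~\ref{wdfl}, so the right-hand map is injective. The snake lemma then yields $\krn p_{\lbe X\lbe Y}^{\le 2}=\img\beta^{\le 2}$, i.e. exactness at $\brp X\!\oplus\!\brp\,Y$, together with a canonical isomorphism $\cok p_{\lbe X\lbe Y}^{\le 2}\simeq\cok\be[\le M_{X}\be\oplus\be M_{Y}\to M_{X\be\times_{\be S}Y}]$.

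It then remains to splice. The quotient $\brp(X\!\times_{\be S}\be Y)\onto\nbrxys$ kills $\img(\le f\!\times_{\be S}\be g\le)^{(2)}\subseteq\img p_{\lbe X\lbe Y}^{\le 2}$ and carries $\img p_{\lbe X\lbe Y}^{\le 2}$ onto the image of \eqref{br1a}, so it identifies $\cok p_{\lbe X\lbe Y}^{\le 2}$ with $\nbrxys/(\nbrxs\be\oplus\be\nbrys)$. By the exactness of \eqref{brob} the latter injects into $\brxys(\lbe S\e)/(\brxs(\lbe S\e)\be\oplus\be\brys(\lbe S\e))$ with cokernel $\brtxy/(\brt\be\oplus\be\brty)$. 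Hence splicing the four-term exact sequence
\[
0\to\brp S\overset{\beta^{\le 2}}{\to}\brp X\!\oplus\!\brp\,Y\overset{p_{\lbe X\lbe Y}^{\le 2}}{\to}\brp(X\!\times_{\be S}\be Y)\to\cok p_{\lbe X\lbe Y}^{\le 2}\to 0
\]
with \eqref{brob} produces the asserted sequence, the connecting map $\brp(X\!\times_{\be S}\be Y)\to\brxys(\lbe S\e)/(\brxs(\lbe S\e)\be\oplus\be\brys(\lbe S\e))$ being the evident composite with kernel $\img p_{\lbe X\lbe Y}^{\le 2}$.

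The one genuinely non-formal ingredient, and the step I expect to be the main obstacle, is the exactness at $\brp X\!\oplus\!\brp\,Y$: it reduces entirely to the injectivity of \eqref{ran}, i.e. to Proposition~\ref{wdfl}. Everything else is bookkeeping, namely checking that the squares in the diagram above commute (so that the snake lemma applies), that $\beta^{\le 2}$ and $p_{\lbe X\lbe Y}^{\le 2}$ genuinely restrict to the maps of Proposition~\ref{bro!}, and that the final identification of $\cok p_{\lbe X\lbe Y}^{\le 2}$ with the leading term of \eqref{brob} is canonical.
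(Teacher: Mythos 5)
Your argument is correct, and at the top level it follows the same strategy as the paper: produce the four-term exact sequence $0\to\brp S\overset{\beta^{\le 2}}{\to}\brp X\oplus\brp Y\overset{p_{XY}^{2}}{\to}\brp(X\times_{S}Y)\to\cok p_{XY}^{2}\to 0$ by a snake-lemma argument, identify $\cok p_{XY}^{2}$ with $\cok[\e\nbrxs\oplus\nbrys\to\nbrxys\e]$, and splice with \eqref{brob}. The difference is the choice of diagram. The paper applies the snake lemma to the morphism of short exact sequences with rows $0\to\brp S\oplus\brp S\to\brp X\oplus\brp Y\to\nbrxs\oplus\nbrys\to 0$ and $0\to\brp S\to\brp(X\times_{S}Y)\to\nbrxys\to 0$ (exactness of the rows needs only the injectivity of $f^{(2)}$, $g^{(2)}$ and $(f\times_{S}g)^{(2)}$, coming from hypothesis (i)), so that the single nontrivial vertical input is the injectivity of \eqref{br1a}, i.e.\ the first map of \eqref{brob}. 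You instead decompose via \eqref{bps0}, placing $\bro\oplus\broys\to\broxys$ on the left and the images $M$ inside the groups of sections of the sheaves $\brxs$ on the right; this forces you to invoke two further ingredients, namely the first exact sequence of Proposition \ref{bro!} (surjectivity of \eqref{nad} and the identification of its kernel with $\img\beta^{\le 2}$) and Proposition \ref{wdfl} (injectivity on the right), both of which are available and correctly used, together with the naturality of \eqref{canb} for the right-hand square. Your subsequent identification of $\cok p_{XY}^{2}$ with $\nbrxys/(\nbrxs\oplus\nbrys)$ via the quotient $\brp(X\times_{S}Y)\onto\nbrxys$, and the final splicing with \eqref{brob}, coincide with the paper's. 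So the proof is sound; it is only slightly less economical than the paper's, whose diagram lets the one injectivity it needs ($\nbrxs\oplus\nbrys\into\nbrxys$) do the work that you distribute between Proposition \ref{wdfl} and the first sequence of Proposition \ref{bro!}.
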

\begin{proof} The theorem follows from the sequence \eqref{brob} by applying the snake lemma to the commutative diagram with exact rows 
\[
\xymatrix{0\ar[r]&\brp S\be\oplus\be\brp S\ar[rr]^(.47){(\e f^{(2)}\!,\, g^{(2)})}\ar@{->>}[d]^(.47){(\e\cdot\e)}&& \brp X\!\oplus\!\brp\, Y\ar[r]\ar[d]^(.45){p_{\be X\lbe Y}^{\le 2}}& \nbrxs\le\oplus\le\nbrys\ar@{^{(}->}[d]\ar[r]& 0\\
0\ar[r]&\brp S\ar[rr]^(.47){(\e f\lbe\times_{\be S}\le g)^{(2)}}&&\brp\lbe(X\!\be\times_{\be S}\!\lbe Y\le)\ar[r]& \nbrxys\ar[r]& 0.
}
\]
\end{proof}

\begin{remarks}\label{mcor2} \indent
\begin{itemize}
\item[(a)] The theorem yields an isomorphism $\img p_{\be X\lbe Y}^{\e 2}\simeq\cok\beta^{\le 2}$. Thus, by  Lemma \ref{b2}, $\img p_{\lbe X\lbe Y}^{\e 2}\subseteq\brp\lbe(\lbe X\!\times_{\be S}\lbe Y\le)$ is an extension of $\nbrxs\be\oplus\be\nbrys$ by $\brp S$, i.e., there exists a canonical exact sequence of abelian groups
\[
0\to \brp S\to\img p_{\lbe X\lbe Y}^{\e 2} \to\nbrxs\be\oplus\be\nbrys\to 0.
\]
\item[(b)] The proof of the theorem yields a canonical isomorphism 
\[
\cok p_{\be X\lbe Y}^{\le 2}=\displaystyle\frac{\nbrxys}{\nbrxs\be\oplus\be \nbrys}.
\]
Thus $p_{\be X\lbe Y}^{\le 2}$ is surjective if, and only if, the canonical injection $\nbrxs\be\oplus\be \nbrys\hookrightarrow\nbrxys$ is an isomorphism.
\end{itemize}
\end{remarks}

\section{Applications}

In this paper we only discuss applications of Theorem \ref{main} to smooth varieties over a field.

The following statement is a particular case of Theorem \ref{main} (recall that $\br_{\!\! X\be/k}^{\le\prime}\lbe(k\e)=(\brp\xs\le)^{\g}$ by Remark \ref{int}(c)):

\begin{proposition}\label{mcor} Let $k$ be a field and let $X$ and $\e Y$ be smooth $k$-varieties. Assume that	
\begin{enumerate}
\item[(i)] $I(X\!\times_{k}\! Y\le)=1$ and
\item[(ii)] $\pic\be(X^{\le\rm s}\be\times_{\ks}\be Y^{\e\rm s}\le)^{\g}=(\e\pic X^{\le\rm s})^{\g}\be\oplus\be(\e\pic Y^{\e\rm s})^{\g}$. 
\end{enumerate}	
Then there exists a canonical exact sequence of abelian groups
\[
\begin{array}{rcl}
0\to\br\le k\be\overset{\!\beta^{\le 2}}{\to}\be\brp X\lbe\oplus\lbe\brp\, Y\overset{\!p_{\be X\lbe Y}^{\le 2}}{\to} \brp(X\!\times_{\lbe k}\! Y\le)&\to& \displaystyle\frac{\brp(\xs\!\be\times_{\lbe \ks}\!\ys)^{\g}}{(\brp \xs)^{\g}\be\oplus\be(\brp\,\ys)^{\g}}\\\\
&\to&\displaystyle\frac{\br_{\be 2}^{\le\prime}(\be X\!\times_{k}\! Y\be/k)}{\br_{\be 2}^{\le\prime}(\be X\be/k)\!\oplus\!\br_{\be 2}^{\le\prime}(\le Y\be/k)}\to 0.
\end{array}
\]
where $\beta^{\le 2}$ is (induced by) the map \eqref{beta2} and $p_{\be X\lbe Y}^{\le 2}$ is the map \eqref{pxy2}.
\end{proposition}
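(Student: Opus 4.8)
The plan is to apply Theorem \ref{main} directly with $S=\spec k$ and then translate each term of the resulting five-term exact sequence into the field-theoretic language of the statement. First I would record that $S=\spec k$ is a locally noetherian regular scheme (being the spectrum of a field), and that the structural morphisms $f\colon X\to\spec k$ and $g\colon Y\to\spec k$ are smooth and surjective: surjectivity is automatic since the target is a point and $X,Y$ are nonempty, while smoothness is exactly the hypothesis that $X$ and $Y$ are smooth $k$-varieties. Recall also that, by our standing convention, a $k$-variety is geometrically integral.

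Next I would check that hypotheses (i)--(iii) of Theorem \ref{main} specialize to hypotheses (i)--(ii) of the present statement. The scheme $S=\spec k$ has a single point, which is simultaneously its unique point of codimension $\le 1$ and its unique maximal point $\eta$, with $k(\eta)=k$, $k(\eta)^{\rm s}=\ks$ and $\g(\eta)=\g$. Hypothesis (i) of the theorem is verbatim hypothesis (i) here. Hypothesis (ii) of the theorem holds automatically, since the fiber of $f$ (resp. $g$) over the unique point of $S$ is $X$ (resp. $Y$), which is geometrically integral. Finally, because $X_{\eta}=X$ and $X_{\eta}^{\rm s}=\xs$ (and likewise for $Y$), hypothesis (iii) of the theorem reads
\[
\pic\be(\xs\!\times_{\ks}\!\ys)^{\g}=(\e\pic\xs)^{\g}\be\oplus\be(\e\pic\ys)^{\g},
\]
which is precisely hypothesis (ii) of the present statement.

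With the hypotheses in place, Theorem \ref{main} yields the asserted five-term exact sequence with $\beta^{\le 2}$ and $p_{\be X\lbe Y}^{\le 2}$ the maps \eqref{beta2} and \eqref{pxy2}, so it remains only to identify the five groups. The first term is $\brp S=\brp(\spec k)$, which equals $\br k$ because $\spec k$ is noetherian, regular, separated and affine, so Remark \ref{bgps} applies; this same identification $\br k=\brp k=\brp S$ is what lets $\beta^{\le 2}$ be read as a map out of $\br k$, as explained in the remark following \eqref{pxy2}. The terms $\brp X$, $\brp\,Y$ and $\brp(X\!\times_{k}\!Y)$ require no translation. For the fourth term I would invoke Remark \ref{int}(c), which gives $\brxs(\lbe S\e)=(\brp\xs)^{\g}$ and $\brys(\lbe S\e)=(\brp\ys)^{\g}$ (all three varieties are of finite type over $k$, hence quasi-compact and quasi-separated), together with the base-change identity $(X\!\times_{k}\!Y)^{\rm s}=\xs\!\times_{\ks}\!\ys$, which then gives $\brxys(\lbe S\e)=(\brp(\xs\!\times_{\ks}\!\ys))^{\g}$. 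The fifth term is immediate from the definitions, since for $S=\spec k$ one has $\brt=\br_{\be 2}^{\le\prime}(\be X\be/k)$, $\brty=\br_{\be 2}^{\le\prime}(\le Y\be/k)$ and $\brtxy=\br_{\be 2}^{\le\prime}(\be X\!\times_{k}\! Y\be/k)$.

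The only step requiring genuine care is the fourth term: one must combine the two inputs---Remark \ref{int}(c) applied to $X$, $Y$ and $X\!\times_{k}\!Y$, and the compatibility $(X\!\times_{k}\!Y)^{\rm s}=\xs\!\times_{\ks}\!\ys$---to rewrite $\brxys(\lbe S\e)/(\brxs(\lbe S\e)\oplus\brys(\lbe S\e))$ as the displayed quotient of Galois invariants. Everything else is a direct reading-off of the specialization $S=\spec k$, so I expect no real obstacle beyond this bookkeeping.
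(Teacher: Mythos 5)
Your proposal is correct and follows exactly the route the paper takes: the paper states Proposition \ref{mcor} as "a particular case of Theorem \ref{main}", with the only flagged identification being $\br_{\!\! X\be/k}^{\le\prime}\lbe(k\e)=(\brp\xs\le)^{\lbe\g}$ from Remark \ref{int}(c), which is precisely the bookkeeping you carry out for the fourth term. Your verification of hypotheses (i)--(iii) of the theorem at $S=\spec k$ and the identification $\brp(\spec k)=\br k$ are the (routine) details the paper leaves implicit.
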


\begin{remarks}\label{mcor3}\indent
\begin{enumerate}	
\item[(a)] Hypothesis (ii) of the proposition is satisfied by pairs of (possibly non-projective) smooth $k$-varieties $X, Y$ such that either $\xs$ or $\ys$ is rational. See \cite[Example 5.9(a)]{ga}.
\item[(b)] Note that the group $\brp(\be \xs\!\be\times_{\lbe \ks}\be\be\ys)^{\g}\be/\,(\brp \xs)^{\g}\be\!\oplus\! (\brp\,\ys)^{\g}$ which appears in the sequence of the proposition is canonically isomorphic to a subgroup of $\left(\brp\lbe(\be \xs\!\be\times_{\lbe \ks}\!\ys)/\brp \xs\!\oplus\be \brp\,\ys\right)^{\g}$.
\end{enumerate}
\end{remarks}

\smallskip

\begin{corollary}\label{av} Let $k$ be a field and let $X$ and $Y$ be smooth and {\rm projective} $k$-varieties. Assume that 
\begin{enumerate}
\item[(i)] $I(X\be\times_{k}\be Y\le)=1$ and
\item[(ii)] $\Hom_{\le\lle k}(A,B\le)=0$, where $A$ and $B$ are the Picard varieties of $X$ and $Y$, respectively.
\end{enumerate}
Then there exists a canonical exact sequence of abelian groups
\[
\begin{array}{rcl}
0\to\br\le k\to\be\br X\lbe\oplus\lbe\br Y\to\br\lbe(X\!\times_{\lbe k}\be Y\le)&\to& \displaystyle\frac{\br\lbe(\lbe\xs\be\times_{\lbe \ks}\! \ys)^{\g}}{(\br\lbe\xs)^{\g}\be\!\oplus\! (\br\ys)^{\g}}\\\\
&\to&\displaystyle\frac{\br_{\be 2}(\be X\!\times_{k}\! Y\be/k)}{\br_{\be 2}(\be X\be/k)\!\oplus\!\br_{\be 2}(\le Y\be/k)}\to 0.
\end{array}
\]
\end{corollary}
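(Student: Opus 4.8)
The plan is to obtain Corollary \ref{av} as the special case of Proposition \ref{mcor} in which the smooth $k$-varieties $X$ and $Y$ are moreover projective, and then to replace each primed cohomological Brauer group appearing in the resulting five-term sequence by the corresponding group of Azumaya algebras. Accordingly I must check that the two hypotheses of Proposition \ref{mcor} are satisfied and that this replacement is legitimate. Hypothesis (i) of Proposition \ref{mcor} coincides verbatim with hypothesis (i) of the corollary, so no work is required there.

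The heart of the matter is deducing hypothesis (ii) of Proposition \ref{mcor}, the equality $\pic\be(\xs\be\times_{\ks}\be\ys)^{\g}=(\e\pic\xs)^{\g}\be\oplus\be(\e\pic\ys)^{\g}$, from the assumption $\Hom_{k}(A,B)=0$. I would begin from the canonical $\g$-equivariant exact sequence
\[
0\to\pic\xs\oplus\pic\ys\to\pic\be(\xs\be\times_{\ks}\be\ys)\to\Hom_{\ks}\be(\mathrm{Alb}_{\xs},B^{\e\rm s})\to 0,
\]
whose third term records the divisorial correspondences; here $\mathrm{Alb}_{\xs}=\hat A^{\e\rm s}$ is the Albanese variety of $\xs$, dual to $A^{\e\rm s}$, and $B^{\e\rm s}$ is the base change of $B$. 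Since taking $\g$-invariants is left exact, the desired equality will follow once the invariant subgroup $\Hom_{\ks}\be(\mathrm{Alb}_{\xs},B^{\e\rm s})^{\g}=\Hom_{k}(\hat A,B)$ is seen to vanish. As $X$ is projective, the Picard variety $A$ carries a polarization over $k$, i.e. a surjective $k$-isogeny $\lambda\colon A\to\hat A$; precomposition with $\lambda$ embeds $\Hom_{k}(\hat A,B)$ into $\Hom_{k}(A,B)=0$ (equivalently, $\Hom$ of abelian varieties is an isogeny invariant after $\otimes\,\Q$ and the $\Hom$-groups are torsion-free). Hence hypothesis (ii) holds; this implication is the content of \cite[Corollary 5.11]{ga}, which I would cite.

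It then remains to pass from the primed Brauer groups to the unprimed ones. Each of $X$, $Y$ and $X\be\times_{k}\be Y$ is smooth and projective over $k$, hence regular, noetherian, separated and equipped with an ample invertible sheaf, and the same holds over $\ks$ for $\xs$, $\ys$ and $\xs\be\times_{\ks}\be\ys=(X\be\times_{k}\be Y)^{\rm s}$. By Remark \ref{int}(d) every primed group attached to these schemes equals its Azumaya counterpart, so $\brp X=\br X$, $\brp Y=\br Y$, $\brp(X\be\times_{k}\be Y)=\br(X\be\times_{k}\be Y)$, $(\brp\xs)^{\g}=(\br\xs)^{\g}$, $(\brp\ys)^{\g}=(\br\ys)^{\g}$, $\brp(\xs\be\times_{\ks}\be\ys)^{\g}=\br(\xs\be\times_{\ks}\be\ys)^{\g}$, and $\br_{\be 2}^{\le\prime}(Z/k)=\br_{\be 2}(Z/k)$ for $Z=X,Y,X\be\times_{k}\be Y$; moreover $\brp k=\br k$. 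Substituting these equalities into the exact sequence furnished by Proposition \ref{mcor} (the maps $\beta^{\le 2}$ and $p_{\be X\lbe Y}^{\le 2}$ descending to the unprimed groups by the Remark following \eqref{pxy2}) produces precisely the sequence claimed in the corollary.

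The only genuine obstacle is the verification of hypothesis (ii): one needs the canonical $\g$-equivariant exact sequence expressing the Picard group of the product, together with the identification of its correspondence term through the Albanese variety, and one must then invoke a $k$-polarization to convert the hypothesis $\Hom_{k}(A,B)=0$ — stated on the Picard varieties — into the vanishing of $\Hom_{k}(\hat A,B)$ that actually controls the invariants. The remaining steps are routine applications of the cited results.
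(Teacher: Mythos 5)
Your proposal is correct and follows essentially the same route as the paper: the corollary is obtained by specializing Proposition \ref{mcor}, verifying its hypothesis (ii) from $\Hom_{k}(A,B)=0$, and converting primed to unprimed Brauer groups via Remark \ref{int}(d). The only difference is that where you reconstruct the Picard-group decomposition of the product (via the divisorial-correspondence term $\Hom(\mathrm{Alb},\,\cdot\,)$ and a $k$-polarization of $A$) the paper simply cites \cite[proof of Proposition 5.10]{ga} (note that \cite[Corollary 5.11]{ga}, which you mention, is the special case where $X$ and $Y$ are themselves abelian varieties).
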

\begin{proof} Under the stated hypotheses, \cite[proof of Proposition 5.10]{ga} shows that 
$\pic\be(X^{\le\rm s}\be\times_{\ks}\be Y^{\e\rm s}\le)^{\g}=(\e\pic X^{\le\rm s})^{\g}\be\oplus\be(\e\pic Y^{\e\rm s})^{\g}$. Thus, in conjunction with Remark \ref{int}(d), the corollary is immediate from the proposition.
\end{proof}

\begin{remark} Hypothesis (ii) of the corollary holds in many cases of interest. See \cite[Examples 5.12]{ga}.	
\end{remark}

\begin{theorem}\label{ni} Let $k$ be a field of characteristic zero and let $X$ and $\e Y$ be smooth and projective $k$-varieties. Assume that
\begin{itemize}
\item[(i)] the geometric N\'eron-Severi groups ${\rm NS}(\xs)$ and ${\rm NS}(\e\ys)$ are torsion-free,
\item[(ii)] either $H^{1}(\xs,\s O_{\be X^{\lbe\rm s}})=0$ or $H^{1}(\e\ys,\s O_{\lle Y^{\lle\rm s}})=0$ and
\item[(iii)] $I(X\be\times_{k}\be Y\le)=1$.
\end{itemize}	
Then there exists a canonical exact sequence of abelian groups
\[
0\to\br\le k\be\overset{\!\beta^{\lle 2}}{\lra}\be\br X\be\oplus\be\br Y\overset{\! p_{\be X\lbe Y}^{\lle 2}}{\lra}\br\lbe(X\be\times_{\lbe k}\!\lbe Y\le)\to 0,
\]
where $\beta^{2}$ and $p_{\be X\lbe Y}^{\le 2}$ are (induced by) the maps \eqref{beta2} and \eqref{pxy2}, respectively.
Consequently
\begin{enumerate}
\item[(a)] every Azumaya algebra on $X\be\times_{\lbe k}\lbe Y$ is equivalent to a tensor product of pullbacks of an Azumaya algebra on $X$ and an Azumaya algebra on $Y$, and
\item[(b)] there exists a canonical exact sequence of abelian groups
\[
0\to\br\le  k\to \br\lbe\lbe(X\!\be\times_{\lbe k}\!\lbe Y\le)\to(\lle\br X\be/\e\br k)\be\oplus\be(\lle\br Y\!/\e\br k) \to 0.
\]
If $(X\times_{k} Y)(k)\neq\emptyset$, then the choice of a $k$-rational point on $X\times_{k} Y$ determines a splitting of the preceding sequence and
\[
\br\lbe\lbe(X\!\be\times_{\lbe k}\!\lbe Y\le)\simeq\br\le k\lbe\oplus\lbe (\lle\br X\!/\e\br k)\be\oplus\be(\lle\br Y\!/\e\br k).
\] 
\end{enumerate}
\end{theorem}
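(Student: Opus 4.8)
The plan is to obtain the three-term sequence by annihilating the two right-hand terms of the five-term exact sequence of Proposition~\ref{mcor} (the specialization of Theorem~\ref{main} to $S=\spec k$). Since $X$, $Y$ and $X\times_{k}Y$ are smooth and projective, they are regular, separated and carry an ample invertible sheaf; hence by Remark~\ref{int}(d) every primed Brauer group below agrees with its unprimed counterpart, and by Remark~\ref{int}(e) the groups $\br_{2}(-/k)$ are finite. Hypothesis (iii) of the present theorem is hypothesis (i) of Proposition~\ref{mcor}, and geometric integrality over a field is automatic, so the only nontrivial hypothesis to check is hypothesis (ii) of that proposition, namely $\pic(\xs\times_{\ks}\ys)^{\g}=(\pic\xs)^{\g}\oplus(\pic\ys)^{\g}$.

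To verify this I would assume, without loss of generality (the whole situation being symmetric in $X$ and $Y$), that $H^{1}(\ys,\s O_{\ys})=0$. In characteristic zero the Picard scheme is smooth, so $\dim{\rm Pic}^{0}(\ys)=h^{1}(\ys,\s O_{\ys})=0$ and therefore ${\rm Pic}^{0}(\ys)=0$, i.e. $\pic\ys={\rm NS}(\ys)$. The standard decomposition of the Picard group of a product of smooth projective varieties then gives $\pic(\xs\times_{\ks}\ys)=\pic\xs\oplus\pic\ys\oplus\Hom(\mathrm{Alb}\,\xs,{\rm Pic}^{0}(\ys))=\pic\xs\oplus\pic\ys$ as $\g$-modules, and passing to $\g$-invariants yields hypothesis (ii). Proposition~\ref{mcor} thus supplies the exact sequence
\[
0\to\br k\overset{\beta^{2}}{\lra}\br X\oplus\br Y\overset{p_{XY}^{2}}{\lra}\br(X\times_{k}Y)\to T_{1}\to T_{2}\to 0,
\]
where $T_{1}=\br(\xs\times_{\ks}\ys)^{\g}\big/\big((\br\xs)^{\g}\oplus(\br\ys)^{\g}\big)$ and $T_{2}=\br_{2}(X\times_{k}Y/k)\big/\big(\br_{2}(X/k)\oplus\br_{2}(Y/k)\big)$.

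The crux is the vanishing of $T_{1}$, and here I would invoke the computation of Skorobogatov and Zarhin \cite{sz}: under the torsion-freeness of ${\rm NS}(\xs)$ and ${\rm NS}(\ys)$ together with ${\rm Pic}^{0}(\ys)=0$, the canonical pullback-and-multiply map $\br\xs\oplus\br\ys\to\br(\xs\times_{\ks}\ys)$ is an isomorphism of $\g$-modules; concretely the mixed K\"unneth contribution, built from the \'etale $H^{1}$ of the two factors, vanishes because ${\rm Pic}^{0}(\ys)=0$ forces $H^{1}_{\et}(\ys)=0$, while torsion-freeness of the N\'eron--Severi groups removes the residual finite correction terms. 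As $\g$-invariants is left exact and this isomorphism is induced by the very pullbacks defining the natural map $(\br\xs)^{\g}\oplus(\br\ys)^{\g}\to\br(\xs\times_{\ks}\ys)^{\g}$, the latter is an isomorphism, i.e. $T_{1}=0$. Exactness at $T_{1}$ and $T_{2}$ then forces $T_{2}=\img[T_{1}\to T_{2}]=0$ automatically, so $p_{XY}^{2}$ is surjective and the asserted sequence $0\to\br k\to\br X\oplus\br Y\to\br(X\times_{k}Y)\to 0$ is exact. I expect this to be the main obstacle, since it alone relies on the external geometric input \cite{sz}; everything else is formal.

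Consequence (a) is immediate from the surjectivity of $p_{XY}^{2}$ and the formula $(a,b)\mapsto p_{X}^{(2)}(a)\cdot p_{Y}^{(2)}(b)$ for \eqref{pxy2}, using $\br=\brp$ so that classes are represented by Azumaya algebras. For (b), note that $I(X\times_{k}Y)=1$ forces $I(X)=I(Y)=1$ by \eqref{genf}, whence the structural pullbacks $f^{(2)}\colon\br k\into\br X$ and $g^{(2)}\colon\br k\into\br Y$ are injective and $\br X/\br k$, $\br Y/\br k$ are defined. Identifying $\br(X\times_{k}Y)$ with $(\br X\oplus\br Y)/\img\beta^{2}$ and noting that $\img\beta^{2}=\{(f^{(2)}\xi,\,(g^{(2)}\xi)^{-1}):\xi\in\br k\}$ lies inside $f^{(2)}(\br k)\oplus g^{(2)}(\br k)$, a short diagram chase produces the exact sequence
\[
0\to\br k\to\br(X\times_{k}Y)\to(\br X/\br k)\oplus(\br Y/\br k)\to 0,
\]
whose left-hand map is the structural pullback and whose kernel $\big(f^{(2)}(\br k)\oplus g^{(2)}(\br k)\big)/\img\beta^{2}\cong\br k$ via the multiplication $(a,b)\mapsto a\cdot b$. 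Finally, a point in $(X\times_{k}Y)(k)$ is a section of the structural morphism $X\times_{k}Y\to\spec k$, so pullback along it retracts the structural pullback and splits the sequence, giving $\br(X\times_{k}Y)\simeq\br k\oplus(\br X/\br k)\oplus(\br Y/\br k)$.
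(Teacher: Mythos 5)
Your proposal is correct and follows essentially the same route as the paper: specialize the five-term sequence of Theorem \ref{main} to $S=\spec k$ and kill the last two terms by showing, via the Skorobogatov--Zarhin computation in \cite[\S2.9]{sz} together with the vanishing of one of the Picard varieties forced by hypothesis (ii), that $\br(\xs\times_{\ks}\ys)=\br\xs\oplus\br\ys$, whence exactness forces the final quotient to vanish as well. The only (immaterial) difference is that you verify the Picard-additivity hypothesis of Proposition \ref{mcor} directly through the divisorial-correspondence decomposition, whereas the paper routes through Corollary \ref{av} by observing that $\Hom_k(A,B)=0$ when one of $A$, $B$ is trivial; your fuller treatment of consequences (a) and (b) is also sound.
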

\begin{proof} By Remark \ref{mcor3}(b) and Corollary \ref{av}, it suffices to check that $\br\!\lbe\left(\le \xs\!\be\times_{\lbe \ks}\be\ys\e\right)=\br \xs\oplus\br\,\ys$. If $A$ and $B$ denote the Picard varieties of $X$ and $Y$, respectively, then the indicated equality follows from \cite[\S2.9, (20)]{sz} using hypothesis (i) and noting that, by (ii), either $A=0$ or $B=0$ since ${\rm dim}\, A\leq {\rm dim}_{\e k^{\lle\rm s}}\le H^{1}(\xs,\s O_{\be \xs})$ by \cite[Theorem 2.10(iii)]{fga} (and similarly for $\ys$).
\end{proof}

\begin{remark} If $k$ is a field of positive characteristic $p$ then, under hypotheses (i) and (ii) of the proposition,
the quotient $\br\!\lbe\left(\le \xs\!\be\times_{\lbe \ks}\be\ys\e\right)^{\g}\!\!/(\br \xs)^{\g}\oplus(\br\,\ys)^{\g}$ in the sequence of Corollary \ref{av} is a $p$-primary torsion abelian group. See \cite[\S2.9]{sz}.
\end{remark}

\begin{corollary}\label{rad} Let $k$ be a field of characteristic zero and let $\s C$ be the full subcategory of $({\rm Sch}\lbe/k)$ whose objects are the smooth and projective $k$-varieties $X$ such that
\begin{enumerate}
\item[(i)]  ${\rm NS}(\xs)$ is torsion-free,
\item[(ii)] $H^{1}(\xs,\s O_{\be X^{\lbe\rm s}})=0$ and
\item[(iii)] $X(k)\neq\emptyset$.
\end{enumerate}
Then $\s C$ is stable under products and the functor $\s C\to\mathbf{Ab},\e X\mapsto \br X\!/\e\br\le k$, is {\rm additive}, i.e., for every pair of objects $X,Y$ in $\s C$, the homomorphism of abelian groups induced by $p_{\be X\lbe Y}^{\lle 2}\!$ \eqref{pxy2}
\[
\br\lbe(X\!\times_{\lbe k}\! Y\le)\lbe/\e \br\le k\to(\lle\br X\be/\e\br k)\be\oplus\be(\lle\br Y\!/\e\br k)
\]
is an isomorphism. If, in addition, $\br\e k=0$, then $\br\lbe(-)$ is an additive functor on $\s C$. 
\end{corollary}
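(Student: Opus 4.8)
The plan is to deduce the entire statement from Corollary \ref{niI}, once I have checked that $\s C$ is closed under products (the categorical product of two objects $X,Y$ of $\s C$ being their fibre product $X\be\times_{k}\be Y$). So first I would verify stability under products: given $X,Y\in\s C$, set $Z=X\be\times_{k}\be Y$. Then $Z$ is smooth and projective, and $Z(k)\neq\emptyset$ because a pair of $k$-rational points on $X$ and $Y$ yields one on $Z$; moreover $H^{1}(Z^{\rm s},\s O_{Z^{\rm s}})=0$ by the K\"unneth formula together with $H^{0}(\xs,\s O_{\xs})=H^{0}(\ys,\s O_{\ys})=\ks$ (valid since $\xs,\ys$ are integral and proper over $\ks$). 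The one substantial point is that ${\rm NS}(Z^{\rm s})$ is torsion-free. Here I would use that, since ${\rm char}\,k=0$, the Picard variety $A$ of $\xs$ satisfies $\dim A\le\dim_{\ks}H^{1}(\xs,\s O_{\xs})=0$ by \cite[Theorem 2.10(iii)]{fga}, so $A=0$; the same holds for $\ys$. Hence $\pic\xs={\rm NS}(\xs)$ and $\pic\ys={\rm NS}(\ys)$. The classical description of the Picard group of a product then gives $\pic Z^{\rm s}\simeq\pic\xs\oplus\pic\ys$, its correspondence summand $\Hom({\rm Alb}(\xs),{\rm Pic}^{0}(\ys))$ vanishing because ${\rm Pic}^{0}(\ys)=0$; since ${\rm Pic}^{0}(Z^{\rm s})=0$ as well, ${\rm NS}(Z^{\rm s})\simeq{\rm NS}(\xs)\oplus{\rm NS}(\ys)$ is a direct sum of torsion-free groups, hence torsion-free, and therefore $Z\in\s C$.

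Next I would fix $X,Y\in\s C$ and apply Corollary \ref{niI}: its hypothesis (i) holds because ${\rm NS}(\xs)$ is torsion-free, (ii) because $H^{1}(\xs,\s O_{\xs})=0$, and (iii) because the pair of rational points gives a section of $X\be\times_{k}\be Y\to\spec k$, so that the index of $X\be\times_{k}\be Y$ is $1$. This yields the exact sequence
\[
0\to\br k\to\br\lbe(X\be\times_{k}\be Y\le)\to(\br X/\br k)\oplus(\br Y/\br k)\to 0,
\]
in which the surjection is induced by $p_{\be X\lbe Y}^{\le 2}$ \eqref{pxy2} and the injection is the pullback $\br k\to\br\lbe(X\be\times_{k}\be Y\le)$. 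Dividing the middle group by the image of $\br k$ turns this surjection into an isomorphism
\[
\br\lbe(X\be\times_{k}\be Y\le)/\br k\isoto(\br X/\br k)\oplus(\br Y/\br k),
\]
which is exactly the asserted additivity of the functor $X\mapsto\br X/\br k$ on $\s C$.

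Finally, if $\br k=0$ then $\br Z/\br k=\br Z$ for every object $Z$ of $\s C$, so the additivity just established becomes the additivity of $\br(-)$. I expect the only genuine obstacle to lie in the stability step, namely in pinning down the direct-sum decomposition of $\pic Z^{\rm s}$ (equivalently, the vanishing of the correspondence term) that gives torsion-freeness of ${\rm NS}(Z^{\rm s})$; granting that, the additivity is an immediate application of Corollary \ref{niI} followed by a routine passage to quotients.
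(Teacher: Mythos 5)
Your proof is correct and follows essentially the same route as the paper: check that each of (i)--(iii) is stable under products (with (ii) via K\"unneth and (i) via the decomposition ${\rm NS}(\xs\!\times_{\ks}\!\ys)={\rm NS}(\xs)\oplus{\rm NS}(\ys)$, which the paper simply cites from \cite[Proposition 1.7]{sz} and you rederive from the vanishing of the Picard varieties and of the correspondence term), and then obtain additivity by passing to quotients in the exact sequence of Theorem \ref{ni}(b) (equivalently Corollary \ref{niI}).
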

\begin{proof} Condition (iii) is clearly stable under products and (ii) is also stable by the K\"unneth formula \cite[${\rm III}_{2}$, (6.7.8.1)]{ega}. On the other hand, by (ii) and \cite[Proposition 1.7]{sz}, ${\rm NS}(\xs\!\times_{\ks}\!\ys\e)={\rm NS}(\xs)\oplus{\rm NS}(\le\ys)$ for every pair of objects $X,Y$ in $\s C$, whence (i) is stable under products as well. The second assertion of the corollary is 	immediate from the exactness of the sequence in part (b) of the theorem.
\end{proof}

\end{document}